\newcommand{\Cay}{\mathop{\mathrm{Cay}}}
\newcommand{\Aut}{\mathop{\mathrm{Aut}}}
\def\cent#1#2{{\bf C}_{{#1}}({{#2}})}
\def\nor#1#2{{\bf N}_{{#1}}({{#2}})}
\numberwithin{equation}{section}
\newtheorem{theorem}{Theorem}[section]
\newtheorem{lemma}[theorem]{Lemma}
\newtheorem{corollary}[theorem]{Corollary}
\newtheorem{example}[theorem]{Example}
\newtheorem{conjecture}[theorem]{Conjecture}
\newtheorem{question}[theorem]{Problem}
\theoremstyle{definition}
\newtheorem{definition}[theorem]{Definition}
\begin{document}
\title[Enumeration bipartite Cayley graphs]{On the existence and the enumeration of bipartite regular representations of Cayley graphs over abelian groups}

\author{Jia-Li Du}
\address{Jia-Li Du, Department of Mathematics, Beijing Jiaotong University, Beijing 100044, China}
\email{JiaLiDu@bjtu.edu.cn}

\author{Yan-Quan Feng}
\address{Yan-Quan Feng, Department of Mathematics, Beijing Jiaotong University, Beijing 100044, China}
\email{yqfeng@bjtu.edu.cn}

\author{Pablo Spiga}
\address{Pablo Spiga, Dipartimento di Matematica e Applicazioni, University of Milano-Bicocca, Via Cozzi 55, 20125 Milano, Italy}
\email{pablo.spiga@unimib.it}

\begin{abstract}
In this paper we are interested in the asymptotic enumeration of bipartite Cayley digraphs
 and Cayley graphs over abelian groups. Let $A$ be an abelian group and let $\iota$ be the automorphism of $A$
defined by $a^\iota=a^{-1}$, for every $a\in A$. A Cayley graph $\Cay(A, S)$ is said to have an automorphism group as small as possible if $\Aut(\Cay(A,S)) = \langle A,\iota\rangle$. In this paper,
we show that, except for two infinite families, almost all bipartite Cayley graphs on abelian groups have automorphism group as small as possible. We also  investigate the analogous question for bipartite Cayley digraphs.

These results are used for the asymptotic enumeration of bipartite Cayley digraphs and graphs over abelian groups.
\smallskip

\noindent\textbf{Keywords} regular representation, DRR, GRR,  bipartite (di)graph, Cayley digraph, automorphism group, Cayley index
\end{abstract}
\subjclass[2010]{Primary 05C25; Secondary 05C20, 20B25}

\maketitle

\section{Introduction}\label{s: intro}
All digraphs and groups considered in this paper are finite.
Let $G$ be a group and let $S$ be a subset of $G$. The \textit{Cayley digraph} on $G$ with
connection set $S$, denoted $\Cay(G, S)$, is the digraph with vertex-set $G$ and with
$(g, h)$ being an arc if and only if $gh^{-1} \in S$.  It is easy to see that $\Cay(G, S)$ is a graph if and only if $S$ is inverse-closed (that is, $S^{-1}:=\{s^{-1}\mid s\in S\}=S$), in which
case it is called a \textit{Cayley graph}. It is also easy to check that $G$ acts regularly as a
group of automorphisms on $\Cay(G, S)$ by right multiplication. Therefore, in what follows, we always identify $G$ as a subgroup of the automorphism group $\Aut(\Cay(G,S))$ of $\Cay(G,S)$.

In the extreme case, when $G$ equals $\Aut(\Cay(G,S))$, $\Cay(G, S)$
is called a \textit{DRR} (for \textit{digraphical regular representation}). A DRR which is a graph
is called a \textit{GRR} (for \textit{graphical regular representation}).
DRRs and GRRs have been widely studied. There are two natural questions on DRRs (and on GRRs):
\begin{itemize}
\item which groups admit a DRR (or a GRR)?
\item when the size of $G$ tends to infinity, what is the probability that a Cayley digraph (respectively, graph) over $G$ is a DRR (respectively, GRR)?
\end{itemize}
We do not intend to give here a full account on the study of GRRs, this  involved many researchers and papers. Some of the most influential work along the way is due to Babai, Godsil, Hetzel, Imrich, Notwitz and Watkins (to name a few), see \cite{IW,NW,NW1,W1,W2,W3}.

The answer to the first question for DRRs was given by Babai~\cite{babai1}. The analogous answer for GRRs turns out to be considerably harder and  was  completed by Godsil~\cite{Godsil}, after a long series of partial results by various authors, see~\cite{Hetzel,Imrich2,NW} for example. Once this  has been established for DRRs and GRRs, it continued to be considered for a large variety of natural Cayley (di)graphs: for instance, oriented regular representations~\cite{morrisspiga,morrisspiga1,spiga}, tournament regular representations~\cite{babai2}, graphical Frobenius representations~\cite{DWT,WT,spiga0,spiga1}, and graphical representations of small valency~\cite{Spiga,xf,XF}.

The second question seems dramatically harder and it has been touched only in a few particular cases and in peculiar situations, see~\cite{DSV,MSV}. (There are some recent results on the asymptotic enumeration of DRRs in~\cite{MS}.) The asymptotic enumeration of vertex-transitive graphs seems also rather difficult, and we refer the interested reader to the seminal work of Mckay and Praeger~\cite{MP}  and to~\cite{PSV} for some more recent results on vertex-transitive graphs of fixed valency.

The general aim of  this paper is to understand and construct  bipartite DRRs and  bipartite GRRs. The standard techniques developed in~\cite{babai1,Godsil,Hetzel} involving a local analysis on the neighborhood of a Cayley (di)graph do not seem to work for bipartite graphs, because the neighborhood of a bipartite graph is the empty graph, which brings little or no information. Therefore, in our paper, we start our investigation by considering bipartite Cayley digraphs and graphs over abelian groups: this allows us to apply the group-theoretic techniques in~\cite{DSV}.

\begin{theorem}\label{thrm:1}Let $A$ be an abelian group  and let $B$ be a subgroup of $A$ having index $2$. The number of subsets $S$ of $A\setminus  B$ such that $\Cay(A,S)$  is a bipartite $\mathrm{DRR}$  is at least $2^{\frac{|A|}{2}}-3\cdot 2^{\frac{3|A|}{8}+(\log_2|A|)^2}$.
\end{theorem}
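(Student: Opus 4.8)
The plan is to bound the number of \emph{bad} subsets $S\subseteq A\setminus B$, namely those for which $\Cay(A,S)$ is not a $\mathrm{DRR}$, and to show this number is at most $3\cdot 2^{3|A|/8+(\log_2|A|)^2}$. Since $|A\setminus B|=|A|/2$ there are exactly $2^{|A|/2}$ subsets of $A\setminus B$, and each of them yields a bipartite digraph because $gh^{-1}\in S\subseteq A\setminus B$ forces $gB\ne hB$, so the two cosets of $B$ form a bipartition of $\Cay(A,S)$; hence the theorem follows from such a bound. Throughout, write $G=\Aut(\Cay(A,S))$, identify $A$ with its regular image in $G$, and recall that $\Cay(A,S)$ is a $\mathrm{DRR}$ exactly when the point stabiliser $G_1$ is trivial. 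I will freely use the elementary facts that an abelian group of order $n$ has at most $2^{(\log_2 n)^2}$ subgroups and at most $2^{(\log_2 n)^2}$ automorphisms, since each is generated, respectively determined, by at most $\log_2 n$ of its elements.

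\emph{Reduction to connected digraphs.} If $\Cay(A,S)$ is disconnected, then $M:=\langle S\rangle$ is a proper subgroup of $A$ with $M\not\subseteq B$, so $S\subseteq M\setminus B$ and $|M\setminus B|=|M|/2\le|A|/4$; summing over the at most $2^{(\log_2|A|)^2}$ possibilities for $M$ gives at most $2^{|A|/4+(\log_2|A|)^2}$ bad $S$, comfortably within one of the three terms. So from now on $\Cay(A,S)$ is connected, and then $B$ is generated by $\{s_1s_2,\ s_1s_2^{-1}:s_1,s_2\in S\}$: this set lies in $B$, and conversely every $b\in B$ is a product of an even number of elements of $S\cup S^{-1}$, hence a product of terms $st^{\pm1}$ with $s,t\in S$. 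Consequently every $\alpha\in\Aut(A)$ with $S^\alpha=S$ also satisfies $B^\alpha=B$. I now invoke the analysis of automorphism groups of Cayley (di)graphs over abelian groups from \cite{DSV}: when $G>A$ and $\Cay(A,S)$ is connected, either \emph{(a)} $A\trianglelefteq G$, so that $G=A\rtimes\{\alpha\in\Aut(A):S^\alpha=S\}$ and $S$ is invariant under some non-identity automorphism of $A$; or \emph{(b)} there exist subgroups $1\ne H\le K\lneq A$ with $S\setminus K$ a union of cosets of $H$. Checking that \cite{DSV} yields exactly this dichotomy in the present bipartite situation, with the subgroups that arise compatible with $B$, is where the genuine content lies.

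\emph{Counting case (a).} Fix a non-identity $\alpha\in\Aut(A)$; since $B^\alpha=B$, $\alpha$ permutes $A\setminus B$, and a bad $S$ of this kind is a union of $\langle\alpha\rangle$-orbits on $A\setminus B$. As $\mathrm{Fix}(\alpha)$ is a proper subgroup of $A$, we have $|\mathrm{Fix}(\alpha)|\le|A|/2$, hence $f:=|\mathrm{Fix}(\alpha)\cap(A\setminus B)|\le|A|/4$; the non-fixed points of $A\setminus B$ lie in orbits of length at least $2$, so the number of $\langle\alpha\rangle$-orbits on $A\setminus B$ is at most $f+\frac{1}{2}(|A|/2-f)=|A|/4+f/2\le 3|A|/8$. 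Thus there are at most $2^{3|A|/8}$ such $S$ for each $\alpha$, so at most $2^{3|A|/8+(\log_2|A|)^2}$ in total. \emph{Counting case (b).} Fix $1\ne H\le K\lneq A$. If $H\not\subseteq B$ then every coset of $H$ meets $B$, so $S\setminus K=\emptyset$, forcing $S\subseteq K\setminus B$ and leaving at most $2^{|K|/2}\le2^{|A|/4}$ choices. If $H\subseteq B$ then $S$ is determined by $S\cap K$, a subset of $K\setminus(K\cap B)$, together with the subset of those cosets of $H$ lying outside $K$ and inside $A\setminus B$ that it contains; since $|H|\ge2$ and $|K|\le|A|/2$, a direct count bounds the number of such $S$ by $2^{|K|/4+|A|/4}\le2^{3|A|/8}$. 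Summing over the at most $2^{(\log_2|A|)^2}$ relevant pairs $(H,K)$ gives at most $2^{3|A|/8+(\log_2|A|)^2}$ bad $S$ from case (b).

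Adding the three contributions — disconnected digraphs, case (a), and case (b) — bounds the number of bad subsets of $A\setminus B$ by $3\cdot2^{3|A|/8+(\log_2|A|)^2}$, as required. The main obstacle is the structural dichotomy: one must extract from \cite{DSV} the two alternatives above in a form where $\mathrm{Fix}(\alpha)$ in case (a), and $H\le K$ in case (b), interact with the bipartition $B$ in a controlled way. Everything afterwards is the orbit- and coset-counting sketched above; the only delicate point there is keeping the number of relevant automorphisms and of relevant subgroup-pairs within the $2^{(\log_2|A|)^2}$ budget — which is precisely why the bounds on the numbers of subgroups and automorphisms of an abelian group are invoked, and where the harmless constant $3$ comes from.
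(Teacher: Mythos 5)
Your overall strategy is the same as the paper's: count the complement by splitting the bad sets into disconnected ones, automorphism-invariant ones, and ``coset-structured'' ones, with the structural input coming from \cite{DSV}; your orbit count $3|A|/8$ in case (a) and your coset count $2^{|K|/4+|A|/4}$ in case (b) coincide with the paper's Lemmas~\ref{l:1} and~\ref{l:2}. However, the step you yourself flag as ``where the genuine content lies'' is a real gap, and your formulation of it is not correct as stated. What \cite{DSV} (Theorem~4.2) gives is a dichotomy under the hypothesis that $A$ is \emph{self-normalizing} in $G$, not under ``$A$ is not normal'': the paper first shows that for the surviving sets $S$ one has $\nor{G_1}{A}=1$ --- using that $\Gamma$ is connected, so $\{B,A\setminus B\}$ is the unique bipartition, hence $G_1$ fixes $B$ setwise and every element of $\nor{G_1}{A}$ is an automorphism of $A$ fixing both $S$ and $B$, which is excluded since $S\notin\mathcal{A}_2$ --- and only then deduces $A=\nor{G}{A}$ and applies \cite[Theorem~4.2]{DSV}. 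Your dichotomy ``(a) $A\trianglelefteq G$ or (b) subgroups $H\le K$ exist'' omits the intermediate possibility $A<\nor{G}{A}$ with $A$ not normal; the counting consequence there is the same as in your case (a) (some non-identity automorphism of $A$ fixes $S$), but as written your case split is neither the statement of \cite{DSV} nor complete, and you have not derived it.

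There is also a quantitative error in case (b) that breaks the explicit bound of the theorem. An abelian group has at most $2^{(\log_2|A|)^2}$ subgroups, so the number of pairs $(H,K)$ with $H\le K$ is only bounded by $2^{2(\log_2|A|)^2}$, not by $2^{(\log_2|A|)^2}$ as you assert; with your count the contribution of case (b) is $2^{\frac{3|A|}{8}+2(\log_2|A|)^2}$, which is larger than the allowed term $2^{\frac{3|A|}{8}+(\log_2|A|)^2}$. The paper avoids this by first replacing the pair $(H',K')$ coming from \cite{DSV} by $H\le H'$ of prime order and $K\ge K'$ of prime index (noting that $S\setminus K$ is still a union of $H$-cosets), and then using the abelian-specific fact that $A$ has at most $|A|$ subgroups of prime order and at most $|A|$ of prime index, so at most $|A|^2=2^{2\log_2|A|}$ pairs need to be considered --- which does fit inside $2^{(\log_2|A|)^2}$. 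Your direct treatment of the subcase $H\nleq B$ (forcing $S\setminus K=\emptyset$) is a nice alternative to the paper's argument that $H\le B$, and the disconnected-case and case (a) estimates are fine, but without the reduction to prime order and prime index (or some other control on the number of relevant pairs) your argument does not yield the stated bound $2^{\frac{|A|}{2}}-3\cdot 2^{\frac{3|A|}{8}+(\log_2|A|)^2}$.
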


Since $A\setminus B$ has $2^{|A\setminus B|}=2^{\frac{|A|}{2}}$ subsets, from Theorem \ref{thrm:1} we immediately obtain the following corollary.

\begin{corollary}\label{cor:1}For every positive real number $\varepsilon>0$, there exists a natural number $n_\varepsilon$ such that, for every  abelian group $A$ of order at least $n_\varepsilon$ and for every subgroup $B$ of $A$ having index $2$, we have
$$\frac{|\{S\mid S\subseteq A\setminus B, \Cay(A,S) \,\mathrm{is\,a\, DRR}\}|}{|\{S\mid S\subseteq A\setminus B\}|}\ge 1-\varepsilon.$$
\end{corollary}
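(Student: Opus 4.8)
The plan is to derive Corollary~\ref{cor:1} directly from Theorem~\ref{thrm:1} by a crude counting argument. First I would record that, since $B$ has index $2$ in $A$, the nonidentity coset $A\setminus B$ has exactly $|A|/2$ elements, so the total number of subsets $S\subseteq A\setminus B$ is $2^{|A|/2}$; this is the denominator of the displayed ratio, and in particular it depends only on $|A|$, not on the choice of $B$. Next, Theorem~\ref{thrm:1} says that at least $2^{|A|/2}-3\cdot 2^{3|A|/8+(\log_2|A|)^2}$ of these subsets give a bipartite DRR, so the number of ``bad'' subsets --- those $S$ for which $\Cay(A,S)$ is not a DRR --- is at most $3\cdot 2^{3|A|/8+(\log_2|A|)^2}$.

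Dividing by the total, the proportion of bad subsets is at most
$$
3\cdot 2^{\,3|A|/8+(\log_2|A|)^2-|A|/2}=3\cdot 2^{\,-|A|/8+(\log_2|A|)^2}.
$$
Here the exponent is $-|A|/8+(\log_2|A|)^2$, and since $(\log_2 n)^2=o(n)$ it tends to $-\infty$ as $|A|\to\infty$; hence the whole expression tends to $0$. Therefore, given $\varepsilon>0$, I would take $n_\varepsilon$ to be any natural number large enough that $3\cdot 2^{-n/8+(\log_2 n)^2}<\varepsilon$ for all $n\ge n_\varepsilon$ (such an $n_\varepsilon$ exists by the previous sentence). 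For every abelian group $A$ with $|A|\ge n_\varepsilon$ and every index-$2$ subgroup $B$, the proportion of bad subsets is then $<\varepsilon$, so the proportion of subsets $S\subseteq A\setminus B$ for which $\Cay(A,S)$ is a DRR is $\ge 1-\varepsilon$, as claimed.

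The only point needing any care is the asymptotic estimate $(\log_2 n)^2=o(n)$, which forces the exponent above to be eventually negative; everything else is bookkeeping. A secondary point worth flagging is uniformity in $B$: because the bound in Theorem~\ref{thrm:1} involves only $|A|$ and not $B$, a single $n_\varepsilon$ works simultaneously for all index-$2$ subgroups of a given $A$, so no extra argument is needed there. I do not anticipate a genuine obstacle --- all the substance of the corollary is inherited from Theorem~\ref{thrm:1}.
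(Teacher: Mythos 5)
Your proposal is correct and follows exactly the paper's route: the authors also deduce the corollary immediately from Theorem~\ref{thrm:1} together with the observation that $A\setminus B$ has $2^{|A|/2}$ subsets, so the bad proportion is at most $3\cdot 2^{-|A|/8+(\log_2|A|)^2}\to 0$. Your extra remarks on uniformity in $B$ and on $(\log_2 n)^2=o(n)$ are just the bookkeeping the paper leaves implicit.
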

Broadly speaking Corollary \ref{cor:1} says that, when the order of an abelian group $A$ is even and sufficiently large, most bipartite Cayley digraphs over $A$ are DRRs. It worth stressing that Corollary \ref{cor:1} says something slightly stronger, that is, if a subgroup $B$ of $A$ of index $2$ is given in advance, most Cayley digraphs over $A$ with bipartition $\{B,A\setminus B\}$ are DRRs. The difference seems rather subtle, but it is remarkably important for undirected graphs as we will discuss in detail later, see Theorems \ref{thrm:aa} and \ref{thrm:11}.

\begin{corollary}\label{cor:1new}Let $A$ be an abelian group of even order and let $\mathcal{S}:=\{S\subseteq A\mid \Cay(A,S)\, \mathrm{ bipartite}\}$. The proportion of subsets $S$ of $\mathcal{S}$ such that $\Cay(A,S)$ is a bipartite $\mathrm{DRR}$ tends to $1$ as $|A|\to \infty$.
\end{corollary}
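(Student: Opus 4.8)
Write $n=|A|$ and set $N:=|\{S\subseteq A:\Cay(A,S)\text{ is a bipartite }\mathrm{DRR}\}|$. The plan is to bound $N$ from below and $|\mathcal{S}|$ from above and then to check that $N/|\mathcal{S}|\to 1$; morally this amounts to applying Theorem~\ref{thrm:1} for every index-$2$ subgroup of $A$ simultaneously and observing that the bipartite Cayley digraphs not covered this way are negligibly few. The structural input is the elementary fact that a \emph{connected} Cayley digraph $\Cay(A,S)$ is bipartite if and only if $S\subseteq A\setminus B$ for an index-$2$ subgroup $B$ of $A$, and that such a $B$ is then \emph{unique}: if $S\subseteq A\setminus B$ with $[A:B]=2$, the cosets of $B$ give a bipartition; conversely, for $\Cay(A,S)$ connected and bipartite, the set $B$ of elements of $A$ expressible as a product of an even number of elements of $S\cup S^{-1}$ is a subgroup of index exactly $2$ that contains $1$ but avoids $S$; and uniqueness holds because a connected bipartite graph has a unique bipartition and $1\in B$ fixes which half is $B$. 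Moreover a $\mathrm{DRR}$ is connected: if $\langle S\rangle\ne A$ then $\Cay(A,S)$ is a disjoint union of $\ge 2$ copies of a smaller Cayley digraph, and its automorphism group properly contains the regular group $A$. Hence, letting $B_1,\dots,B_t$ be the index-$2$ subgroups of $A$ (with $t\ge 1$ since $n$ is even),
\[
N=\sum_{i=1}^{t}|\{S\subseteq A\setminus B_i:\Cay(A,S)\text{ is a }\mathrm{DRR}\}|\ \ge\ t\,\Bigl(2^{n/2}-3\cdot 2^{3n/8+(\log_2 n)^2}\Bigr),
\]
the union being disjoint by the uniqueness of $B$ and each summand bounded below by Theorem~\ref{thrm:1}.

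For the denominator I would stratify $\mathcal{S}$ by $H:=\langle S\rangle$. If $S\ne\emptyset$ then $H\ne 1$, the Cayley digraph $\Cay(H,S)$ is connected and bipartite, so by the fact above $|H|$ is even and $S\subseteq H\setminus B_0$ for an index-$2$ subgroup $B_0$ of $H$; therefore
\[
|\mathcal{S}|\ \le\ 1+\sum_{\substack{B_0\le H\le A\\ [H:B_0]=2}}2^{|H|/2}.
\]
The pair $H=A$ contributes exactly $t\cdot 2^{n/2}$, matching the leading term of the bound for $N$. Every other pair has $|H|\le n/2$, hence $2^{|H|/2}\le 2^{n/4}$; and since each subgroup of $A$ is generated by at most $\log_2 n$ elements, $A$ has at most $n^{\log_2 n+1}=2^{O((\log n)^2)}$ subgroups, so there are at most $2^{O((\log n)^2)}$ such pairs and they contribute at most $2^{n/4+O((\log n)^2)}$. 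Thus $|\mathcal{S}|\le t\cdot 2^{n/2}+2^{n/4+O((\log n)^2)}+1$.

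Combining the two estimates,
\[
1\ \ge\ \frac{N}{|\mathcal{S}|}\ \ge\ \frac{1-3\cdot 2^{-n/8+(\log_2 n)^2}}{1+t^{-1}2^{-n/4+O((\log n)^2)}+t^{-1}2^{-n/2}},
\]
and since $(\log_2 n)^2=o(n)$ and $t\ge 1$, the right-hand side tends to $1$ as $n\to\infty$, which is the claim. The only substantive ingredient is Theorem~\ref{thrm:1}, which is already available, so I do not expect a genuine obstacle; the rest is bookkeeping. The one point that does require care is that $\mathcal{S}$ is strictly larger than $\bigcup_{i=1}^{t}\{S:S\subseteq A\setminus B_i\}$ — disconnected bipartite Cayley digraphs such as $\Cay(\mathbb{Z}_4,\{2\})$, a disjoint union of two edges, lie in $\mathcal{S}$ even though $\{2\}$ is contained in the complement of no index-$2$ subgroup of $\mathbb{Z}_4$ — so the disconnected strata must be included in the bound for $|\mathcal{S}|$, even though they turn out to contribute a vanishing proportion.
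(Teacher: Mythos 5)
Your proposal is correct and follows the same overall strategy as the paper's proof: apply Theorem~\ref{thrm:1} to every index-$2$ subgroup of $A$ and show that all remaining contributions are negligible. Two details differ. For the numerator, the paper bounds $|\mathcal{D}|$ from below by inclusion--exclusion over the families $\mathcal{D}_B$, estimating $|\mathcal{D}_{B_1}\cap\mathcal{D}_{B_2}|\le 2^{|A|/4}$ via $S\subseteq A\setminus(B_1\cup B_2)$, whereas you note that a DRR is connected (apart from the degenerate $\Cay(C_2,\emptyset)$, which is irrelevant asymptotically) and that a connected bipartite Cayley digraph determines its bipartition subgroup uniquely, so your sum over the $B_i$ is disjoint; both routes give the same lower bound up to negligible terms. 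For the denominator, the paper simply asserts $\mathcal{S}=\bigcup_{B\in\mathcal{B}}\mathcal{S}_B$ and concludes $|\mathcal{S}|\le|\mathcal{B}|\,2^{|A|/2}$; as you correctly point out, that equality is not literally true because of disconnected bipartite Cayley digraphs such as $\Cay(\mathbb{Z}_4,\{2\})$, whose connection set is contained in no $A\setminus B$. Your stratification by $H=\langle S\rangle$, which charges each nonempty $S\in\mathcal{S}$ to a pair $(H,B_0)$ with $[H:B_0]=2$ and $S\subseteq H\setminus B_0$, repairs this at the cost of an extra term $2^{|A|/4+O((\log_2|A|)^2)}$ that does not affect the limit. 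So your argument buys a slightly more careful treatment of the denominator (and avoids inclusion--exclusion), while the paper's version is shorter; the asymptotic conclusion is the same in both.
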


Given a positive integer $t$, we denote by $C_t$ the cyclic group of order $t$.
Since the estimate in Theorem \ref{thrm:1} is rather explicit, we also obtain the following corollary.

\begin{corollary}\label{cor:2}Let $A$ be an abelian group  and let $B$ be a subgroup of $A$ having index $2$. Then, either there exists a subset $S$ of $A\setminus B$ such that $\Cay(A,S)$ is a bipartite  $\mathrm{DRR}$  or the pair $(A,B)$ is in Table~$\ref{table:1}$.
\end{corollary}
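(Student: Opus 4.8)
The plan is to derive Corollary~\ref{cor:2} as a direct consequence of the quantitative estimate in Theorem~\ref{thrm:1}. The key observation is that the number of subsets $S$ of $A\setminus B$ with $\Cay(A,S)$ a bipartite DRR is at least
\[
2^{\frac{|A|}{2}}-3\cdot 2^{\frac{3|A|}{8}+(\log_2|A|)^2},
\]
so there exists at least one such $S$ as soon as this quantity is strictly positive, i.e.\ as soon as
\[
2^{\frac{|A|}{2}}>3\cdot 2^{\frac{3|A|}{8}+(\log_2|A|)^2},
\]
equivalently $\frac{|A|}{8}>(\log_2|A|)^2+\log_2 3$. First I would solve this elementary inequality in $n:=|A|$: the left-hand side grows linearly while the right-hand side grows like $(\log_2 n)^2$, so the inequality holds for all $n$ exceeding some explicit bound $n_0$; a short computation pins down $n_0$ (some concrete number of the order of a few thousand). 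Thus for every abelian group $A$ with $|A|\ge n_0$ and every index-$2$ subgroup $B$, a bipartite DRR on $A$ with bipartition $\{B,A\setminus B\}$ exists, and in particular the pair $(A,B)$ need not be recorded in Table~\ref{table:1}.

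The remaining, and genuinely finite, part of the argument is to deal with the pairs $(A,B)$ with $|A|<n_0$. Here I would argue by a finite (computer-assisted) check: for each abelian group $A$ of even order less than $n_0$ and each subgroup $B$ of index $2$, one searches over subsets $S\subseteq A\setminus B$ for one with $\Aut(\Cay(A,S))=A$ (equivalently, $\Cay(A,S)$ a DRR, which is automatically bipartite since $S\subseteq A\setminus B$). In practice one does not need to examine every group individually: many cases are handled by small ad hoc constructions, and the abelian groups of small order fall into a manageable list by the structure theorem. The pairs $(A,B)$ for which no such $S$ exists are exactly those collected in Table~\ref{table:1}; I would present the verification that the table is both correct (each listed pair really admits no bipartite DRR) and complete (every unlisted small pair admits one).

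The main obstacle is the finite check for small $|A|$: one must be careful that the search is genuinely exhaustive for the (possibly large but finite) collection of groups with $|A|<n_0$, and that the non-existence claims for the entries of Table~\ref{table:1} are verified rather than merely observed. Organising this so that it is both rigorous and readable — isolating the truly exceptional pairs, perhaps grouping them into the ``two infinite families'' alluded to in the abstract together with finitely many sporadic pairs — is where the real work lies; the asymptotic half of the statement is immediate from Theorem~\ref{thrm:1}. A secondary subtlety is bookkeeping with the bipartition: since we insist $S\subseteq A\setminus B$, the resulting DRR is automatically connected-or-not but always bipartite with the prescribed parts, so no separate bipartiteness argument is needed; this should be remarked explicitly to justify that ``DRR'' in Theorem~\ref{thrm:1} may be upgraded to ``bipartite DRR'' in the corollary without loss.
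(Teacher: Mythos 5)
Your proposal follows essentially the same route as the paper: one checks that the lower bound of Theorem~\ref{thrm:1} is positive once $|A|/2>3|A|/8+(\log_2|A|)^2+2$ (the paper's threshold is $|A|\ge 744$, not a few thousand), and then settles all smaller pairs by an exhaustive computer search, confirming Table~\ref{table:1}. The one point you underestimate is the pair $(C_2^6,C_2^5)$, where a naive check over $2^{32}$ subsets is infeasible and the paper needs a separate reduction (transitivity of $\mathrm{AGL}_5(2)$ on bases contained in $A\setminus B$, plus the already-established small entries of Table~\ref{table:1} for the disconnected case) — and note that the ``two infinite families'' belong to the undirected setting of Theorem~\ref{thrm:aa}, not to this corollary, whose exceptions are the ten sporadic pairs only.
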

\begin{table}[!ht]
\begin{tabular}{|c|c|c|}\hline
$A$&$B$&directed bipartite Cayley index\\\hline
$C_2\times C_2$&$C_2$&$2$\\
$C_2\times C_2\times C_2$&$C_2\times C_2$&$6$\\
$C_2\times C_2\times C_2\times C_2$&$C_2\times C_2\times C_2$&$24$\\
$C_2\times C_2\times C_2\times C_2\times C_2$&$C_2\times C_2\times C_2\times C_2$&$72$\\
$C_2\times C_2\times C_2\times C_2\times C_2\times C_2$&$C_2\times C_2\times C_2\times C_2\times C_2$&$4$\\

$C_3\times C_6$&$C_3\times C_3$&$2$\\
$C_4\times C_2\times C_2\times C_2$&$C_2\times C_2\times C_2\times C_2$&$4$\\

$C_4\times C_2\times C_2$&$C_2\times C_2\times C_2$&4\\

$C_4\times C_2\times C_2$&$C_4\times C_2$&2\\

$C_4\times C_2$&$C_2\times C_2$&$2$\\\hline
\end{tabular}
\label{table:1}
\caption{Abelian groups and their index $2$ subgroups not admitting a bipartite DRR}
\end{table}
In line with the work of Morris and Tymburski~\cite{MT} and with the pioneering work of Imrich and Watkins \cite{IW}, we have included a third column in Table~\ref{table:1}, whose meaning we now explain. Let $G$ be a group, let $S$ be a subset of $G$ and let $\Gamma:=\Cay(G,S)$ be the Cayley digraph over $G$ with connection set $S$. The \textit{Cayley index} $c(\Gamma)$ of the digraph $\Gamma$ is $|\Aut(\Gamma):G|$. Therefore $c(\Gamma)$ measures the degree of symmetry of a Cayley digraph; intuitively, the larger $c(\Gamma)$ is, the more symmetric $\Gamma$ is. Moreover, $c(\Gamma)$ is somehow unbiased with respect to the number of vertices of $\Gamma$. (We observe here that Morris and Tymburski define and consider the Cayley index only for undirected graphs, see~\cite[Definition~$1.1$]{MT}.) Following the line of research of Imrich and Watkins, we give the following definition.

\begin{definition}\label{def:1}
{Let $G$ be a group and let $B$ be a subgroup of $G$ having index $2$. The \textit{directed bipartite Cayley index }$\vec{c}(G,B)$ of $(G,B)$ is $$\vec{c}(G,B):=\min_{S\subseteq G\setminus B}|\Aut(\Cay(G,S)):G|.$$
We also define, for groups admitting an index $2$ subgroup, the \textit{global directed bipartite Cayley index}
$$\vec{c}_b(G):=\min_{B\le G, |G:B|=2}\vec{c}(G,B).$$
}
\end{definition}
In the light of Definition~\ref{def:1}, Corollary~\ref{cor:2} says that, except for the ten exceptions in Table~\ref{table:1}, $\vec{c}(A,B)=1$ for every abelian group $A$  and for every subgroup $B$ of $A$ having index $2$. In the third column of Table~\ref{table:1}, we determine the directed bipartite Cayley index for the ten exceptional pairs.

We also prove the following unlabeled version of Theorem~\ref{thrm:1}.
\begin{theorem}\label{thrm:unlabelled1}Let $A$ be an abelian group  and let $B$ be a subgroup of $A$ having index $2$. Then, the number of bipartite Cayley digraphs (up to graph-isomorphism) over $A$ with bipartition $\{B,A\setminus B\}$ is at least $2^{\frac{|A|}{2}-(\log_2|A|)^2}-3\cdot 2^{\frac{3|A|}{8}}$. Moreover, among all bipartite Cayley digraphs (up to graph-isomorphism) over $A$ with bipartition $\{B,A\setminus B\}$, the proportion that are $\mathrm{DRR}$s tends to $1$ as $|A|\to\infty$.
\end{theorem}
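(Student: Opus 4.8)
The plan is to derive Theorem~\ref{thrm:unlabelled1} from Theorem~\ref{thrm:1} by bounding the number of connection sets that can produce the same digraph up to isomorphism. The key ingredient is an elementary Cayley-isomorphism property for DRRs: if $S,S'\subseteq A$ and both $\Cay(A,S)$ and $\Cay(A,S')$ are DRRs, then these two digraphs are isomorphic if and only if $S'=S^\alpha$ for some $\alpha\in\Aut(A)$. I would verify this as follows. Let $\varphi$ be a digraph isomorphism from $\Cay(A,S)$ to $\Cay(A,S')$; since both digraphs are DRRs, $\varphi A\varphi^{-1}=\Aut(\Cay(A,S'))=A$, so $\varphi$ lies in the normaliser $\nor{\mathrm{Sym}(A)}{A}$, which is the holomorph $A\rtimes\Aut(A)$. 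Composing $\varphi$ with a suitable element of $A$ we may assume $\varphi$ fixes the identity of $A$, and then $\varphi\in\Aut(A)$; comparing the neighbourhood of the identity in the two digraphs now gives $S^\varphi=S'$, and the converse implication is immediate.

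Next I would use the crude bound $|\Aut(A)|\le 2^{(\log_2|A|)^2}$, which holds because an abelian group of order $n:=|A|$ can be generated by a set of size $d\le\log_2 n$ and an automorphism is determined by the images of such a generating set, so $|\Aut(A)|\le n^d\le n^{\log_2 n}=2^{(\log_2 n)^2}$. Now consider the family $\mathcal F$ of Cayley digraphs $\Cay(A,S)$ with $S\subseteq A\setminus B$; each such digraph is bipartite with bipartition $\{B,A\setminus B\}$, and being a DRR is an isomorphism invariant, so the DRRs in $\mathcal F$ form a union of isomorphism classes, each of which (by the previous paragraph) comes from at most $|\Aut(A)|\le 2^{(\log_2 n)^2}$ connection sets. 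Since Theorem~\ref{thrm:1} guarantees at least $2^{n/2}-3\cdot 2^{3n/8+(\log_2 n)^2}$ subsets $S\subseteq A\setminus B$ with $\Cay(A,S)$ a bipartite DRR, dividing by $2^{(\log_2 n)^2}$ produces at least $2^{n/2-(\log_2 n)^2}-3\cdot 2^{3n/8}$ isomorphism classes of bipartite DRRs over $A$ with bipartition $\{B,A\setminus B\}$, which is the first assertion.

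For the second assertion, the non-DRRs in $\mathcal F$ arise from at most $2^{|A\setminus B|}-(2^{n/2}-3\cdot 2^{3n/8+(\log_2 n)^2})=3\cdot 2^{3n/8+(\log_2 n)^2}$ connection sets, and hence fall into at most that many isomorphism classes; comparing with the lower bound $2^{n/2-(\log_2 n)^2}-3\cdot 2^{3n/8}$ for the number of DRR classes and using that $3n/8+(\log_2 n)^2$ is eventually far below $n/2-(\log_2 n)^2$, the ratio of non-DRR classes to DRR classes tends to $0$, so the proportion of DRRs among all isomorphism classes in $\mathcal F$ tends to $1$. I do not anticipate a serious obstacle here: the substantial work is already contained in Theorem~\ref{thrm:1}, and the only new points are the Cayley-isomorphism property for DRRs and the bound on $|\Aut(A)|$. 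The single subtlety worth noting is that the $\Aut(A)$-orbit of a set $S\subseteq A\setminus B$ need not stay inside $A\setminus B$ (an automorphism of $A$ may move $B$ to a different index-$2$ subgroup), but this only makes the isomorphism classes in $\mathcal F$ smaller and therefore strengthens, rather than weakens, the lower bound.
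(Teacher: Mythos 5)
Your proposal is correct and follows essentially the same route as the paper: bound the number of connection sets per isomorphism class of DRRs by $|\Aut(A)|\le 2^{(\log_2|A|)^2}$ (via the observation that an isomorphism of two DRRs fixing the identity must be a group automorphism), divide the count from Theorem~\ref{thrm:1}, and bound the number of non-DRR isomorphism classes by the number of non-DRR connection sets. The extra details you supply (the holomorph argument and the remark that $\Aut(A)$-orbits leaving $A\setminus B$ only help) are consistent with, and slightly more explicit than, the paper's own proof.
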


In this paper we also consider bipartite Cayley graphs
over abelian groups $A$. We denote by $\iota:A\to A$ the automorphism of the abelian group $A$ mapping each element to its inverse, that is, $a^\iota=a^{-1}$ for every $a\in A$. Clearly, $\iota$ is the identity mapping when $A$ has exponent $2$, and $\iota$ is an involutory automorphism when $A$ has exponent greater than $2$. When $A$ has exponent greater than $2$ no Cayley graph is a GRR,
because $\iota$ is a non-identity  graph automorphism. Therefore, we are interested in bipartite Cayley graphs $\Cay(A,S)$ having automorphism group ``as small as possible'', that is, $\Aut(\Cay(A,S))=\langle A,\iota\rangle$. We formalize this idea in the following definition.

\begin{definition}\label{def:2}
{Let $G$ be a group and let $B$ be a subgroup of $G$ having index $2$. The \textit{bipartite Cayley index }$c(G,B)$ of $(G,B)$ is $$c(G,B):=\min_{\substack{S\subseteq G\setminus B\\S=S^{-1}}}|\Aut(\Cay(G,S)):G|.$$
We define, for groups admitting an index $2$ subgroup, the \textit{global bipartite Cayley index}
$$c_b(G):=\min_{B\le G, |G:B|=2}c(G,B).$$}
\end{definition}
When $A$ is abelian of exponent greater than $2$, $c(A,B)\ge 2$ for every subgroup $B$ of $A$ having index $2$. Moreover, $c(G,B)\ge c(G)$, where $c(G)$ is the Cayley index of $G$ as defined in~\cite[Definition~$1.1$]{MT}.

\begin{theorem}\label{thrm:aa}
If $A\cong C_4\times C_2^\ell$ for some $\ell\ge 1$ and  $B\cong C_2^{\ell+1}$, or  $A\cong C_4^2\times C_2^\ell$ for some $\ell\ge 0$ and $B\cong C_4\times C_2^{\ell+1}$,
then there exists no subset $S\subseteq A\setminus B$ such that $\Cay(A,S)$ is a bipartite Cayley graph  with bipartite Cayley index $2$.
\end{theorem}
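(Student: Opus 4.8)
The plan is to show that, for each of the two families, \emph{every} inverse-closed $S\subseteq A\setminus B$ yields a graph $\Cay(A,S)$ with $\Aut(\Cay(A,S))\supsetneq\langle A,\iota\rangle$. Since $A$ has exponent $4>2$ in both families, $\iota$ is a non-trivial automorphism of $\Cay(A,S)$ and $A$ has index $2$ in $\langle A,\iota\rangle$, so ``$\Cay(A,S)$ has bipartite Cayley index $2$'' is exactly ``$\Aut(\Cay(A,S))=\langle A,\iota\rangle$''; it is this equality that we must rule out. Write $G:=\langle A,\iota\rangle$. First I would fix coordinates: in the first family $B$ is forced to equal $\{a\in A:a^2=1\}$, while in the second, up to an automorphism of $A$ one may take $A=\langle u\rangle\times\langle v\rangle\times E$ with $u,v$ of order $4$ and $E$ of exponent at most $2$, and $B=\langle u\rangle\times\langle v^2\rangle\times E$ (there are exactly three candidate subgroups $B$ and $\Aut(A)$ permutes them transitively). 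The key structural point in both cases is that every element of $A\setminus B$ has order $4$, hence for $s\in S$ one has $s^{-1}=s\cdot s^2$ with $s^2$ lying in the elementary abelian subgroup $P:=\{a^2:a\in A\}\le B$. Partition $S=\bigsqcup_w S_w$ with $S_w:=\{s\in S:s^2=w\}$; as $S=S^{-1}$, each $S_w$ is inverse-closed, and $wS_w=S_w$ because $ws=s^3=s^{-1}$ for $s\in S_w$. In the first family only $w=z:=u^2$ occurs, so $S=S_z$; in the second the only squares of elements of $A\setminus B$ are $z_v:=v^2$ and $z_uz_v$ (with $z_u:=u^2$), so $S=S_{z_v}\sqcup S_{z_uz_v}$.

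The first case, which settles the whole first family, is when $S$ is a single square class $S=S_w$ with $w\ne1$ (automatic in the first family with $w=z$; in the second it occurs whenever one of $S_{z_v},S_{z_uz_v}$ is empty, the sub-case $S=\varnothing$ being trivial). Then $wS=S$, so for any vertex $p\in A\setminus B$ the vertices $p$ and $wp$ have the same neighbourhood $Sp=S(wp)$, and they are distinct and different from the identity; hence the transposition of $\{p,wp\}$ is a non-trivial automorphism of $\Cay(A,S)$ fixing the vertex $1$. But the stabiliser of $1$ in $G$ is $\langle\iota\rangle$, and $\iota$ moves every element of order greater than $2$, i.e.\ at least $|A|-|\{a:a^2=1\}|>2$ vertices; so this transposition lies outside $G$, giving $\Aut(\Cay(A,S))\supsetneq G$.

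It remains to treat the second family when $S_1:=S_{z_v}$ and $S_2:=S_{z_uz_v}$ are both non-empty; here I would exhibit a ``twisted'' automorphism attached to an index-$2$ subgroup $C\ne B$. The crucial observation is that $\langle z_v,S_2\rangle\ne A$: in $A/P$, which is elementary abelian of rank $\ell+2$, the element $z_v$ is trivial, while every $s\in S_2$ has both its $u$- and $v$-exponents odd, so the images of $S_2$ all lie in the rank-$(\ell+1)$ subspace on which the $\bar u$- and $\bar v$-coordinates agree; hence $\langle z_v,S_2\rangle$ cannot surject onto $A/P$. Choose an index-$2$ subgroup $C$ with $\langle z_v,S_2\rangle\le C$; since $S_2\subseteq S\subseteq A\setminus B$ is non-empty, $S_2\not\subseteq B$, so $C\ne B$. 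Define $\theta\colon A\to A$ by $\theta|_C=\mathrm{id}$ and $\theta(a)=az_v$ for $a\notin C$; this is a well-defined involution of the vertex set because $z_v\in C$ and $z_v^2=1$. For an edge $\{p,q\}$, so $pq^{-1}\in S$: if $p,q$ lie on the same side of $C$ then $\theta(p)\theta(q)^{-1}=pq^{-1}\in S$; if they lie on opposite sides then $pq^{-1}\notin C$, which forces $pq^{-1}\in S_1$ (as $S_2\subseteq C$), and then $\theta(p)\theta(q)^{-1}=pq^{-1}z_v=(pq^{-1})(pq^{-1})^2=(pq^{-1})^{-1}\in S_1\subseteq S$; so $\theta\in\Aut(\Cay(A,S))$. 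Finally $\theta$ is non-trivial and fixes $C$ pointwise, whereas the only element of $G$ fixing $C$ pointwise is the identity: such an element fixes $1\in C$, hence lies in $\langle\iota\rangle$, and $\iota$ does not fix $C$ pointwise because $C$ has order $|A|/2>|\{a:a^2=1\}|$ and so contains an element of order $4$. Therefore $\theta\in\Aut(\Cay(A,S))\setminus G$.

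I expect the main obstacle to be this last case: the naive ``swap twin vertices'' idea breaks down there because $\{a\in A:aS=S\}$ can be trivial, and the argument then hinges entirely on the rank-drop observation $\langle z_v,S_2\rangle\ne A$, which is precisely the feature separating $C_4^2\times C_2^\ell$ from the larger abelian groups where no such twisted automorphism is available. The remaining ingredients are routine: that $B$ is unique up to $\Aut(A)$, that $\theta$ is a bijection and, being an adjacency-preserving involution, an automorphism, and the low-order boundary checks (for instance, for $C_4$ with $B=C_2$ the twin transposition coincides with $\iota$, which is exactly why the hypothesis $\ell\ge1$ is needed in the first family).
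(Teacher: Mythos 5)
Your argument is correct, but it takes a genuinely different route from the paper's. After the same preliminary reduction (pinning $B$ down up to $\Aut(A)$, which the paper also only sketches as ``an easy computation''), the paper's proof (Examples~\ref{example:1} and~\ref{example:2}) exhibits a single group automorphism $\alpha\in\Aut(A)$ with $\alpha\ne 1,\iota$ and $B^\alpha=B$ such that every element of $A\setminus B$ is either fixed or inverted by $\alpha$; hence $\alpha$ stabilizes \emph{every} inverse-closed $S\subseteq A\setminus B$ simultaneously, and $\langle A,\iota,\alpha\rangle\le\Aut(\Cay(A,S))$ gives $c(A,B)\ge 4$ uniformly in $S$. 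You instead build, for each given $S$, an extra graph automorphism that in general does not come from $\Aut(A)$: the decomposition of $S$ into square classes $S_w=\{s\in S\mid s^2=w\}$, a twin transposition of $p$ and $wp$ when $S$ is a single class (which disposes of the whole family $C_4\times C_2^{\ell}$ and the degenerate subcases of the second family), and, in the remaining case, the ``partial translation'' $\theta$ which is the identity on an index-$2$ subgroup $C\supseteq\langle v^2,S_{u^2v^2}\rangle$ and multiplication by $v^2$ off $C$, the existence of $C$ resting on your rank-drop observation $\langle v^2,S_{u^2v^2}\rangle\ne A$ in $A/A^2$. I checked the details (inverse-closedness of each $S_w$, $wS_w=S_w$, the crossing-edge computation $\theta(p)\theta(q)^{-1}=(pq^{-1})v^2=(pq^{-1})^{-1}$ for $pq^{-1}\in S_{v^2}$, and the exclusion $\theta\notin\langle A,\iota\rangle$ via pointwise fixation of $C\not\le A_2$) and they hold. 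What the paper's approach buys is brevity and uniformity -- one automorphism independent of $S$, immediately yielding $c(A,B)\ge 4$, and it isolates exactly the structural feature of these two families exploited elsewhere (compare the exceptional cases in Lemma~\ref{l:22}). What yours buys is a more hands-on, graph-theoretic explanation: in the first family every vertex outside $B$ has a twin, and in the second family the extra symmetry you produce need not even normalize $A$; the price is the case split and the extra lemma in the hardest case.
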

In particular Theorem \ref{thrm:aa} shows that there exist two infinite families  with $c(A,B)>2$.  This behavior is a novelty compared with the statement of Theorem~\ref{thrm:1}.

\begin{question}{\rm Determine the bipartite Cayley index for the two exception families in Theorem~\ref{thrm:aa}. That is, determine $c(A,B)$, where
$A\cong C_4\times C_2^\ell$ for some $\ell\ge 1$ and  $B\cong C_2^{\ell+1}$, or  $A\cong C_4^2\times C_2^\ell$ for some $\ell\ge 0$ and $B\cong C_4\times C_2^{\ell+1}$.
}
\end{question}

Observe that Theorem \ref{thrm:aa} does not make any claim on the index $2$ subgroups $B'$ of $A$ different from the subgroup $B$ in the statement. Among other things, this point is cleared in the next result. (Given an abelian group $A$ and $a\in A$, we denote by $o(a)$ the order of $a$ and by $A_2$ the subgroup $A_2:=\{a\in A\mid a^2=1\}$.)

\begin{theorem}\label{thrm:11}Let $A$ be an abelian group and let $B$ be a subgroup of $A$ having index $2$. Let $c:=1$ when $A$ has exponent $2$ and let $c:=2$ when $A$ has exponent greater than $2$. Then $A$ contains $2^{\frac{|A|}{4}+\frac{|A_2\setminus B|}{2}}$ inverse-closed subsets $S$ with $S\subseteq A\setminus B$. Moreover, one of the following holds:
\begin{enumerate}
\item the number of inverse-closed subsets $S$ of $A\setminus  B$ such that $\Cay(A,S)$  has bipartite Cayley index $c$ is at least $2^{\frac{|A|}{4}+\frac{|A_2\setminus B|}{2}}-2^{\frac{11|A|}{48}+\frac{|A_2\setminus B|}{2}+(\log_2|A|)^2+2}$,
\item $A\cong C_4\times C_2^\ell$ for some $\ell\ge 1$ and  $B\cong C_2^{\ell+1}$, or  $A\cong C_4^2\times C_2^\ell$ for some $\ell\ge 0$ and $B\cong C_4\times C_2^{\ell+1}$.
\end{enumerate}
\end{theorem}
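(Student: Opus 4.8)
The plan is to adapt the counting strategy behind Theorem~\ref{thrm:1} to the inverse-closed setting, treating the undirected case as essentially the ``square root'' of the directed case. First I would establish the count $2^{\frac{|A|}{4}+\frac{|A_2\setminus B|}{2}}$ of inverse-closed subsets $S\subseteq A\setminus B$: the map $\iota$ acts on $A\setminus B$ and an inverse-closed $S$ is a union of $\iota$-orbits, so $S$ is determined by choosing, for each $\iota$-orbit on $A\setminus B$, whether or not to include it. The fixed points of $\iota$ on $A\setminus B$ are exactly $A_2\setminus B$, contributing $|A_2\setminus B|$ orbits of size $1$, while the remaining $|A\setminus B|-|A_2\setminus B|=\frac{|A|}{2}-|A_2\setminus B|$ elements split into $\frac{1}{2}\bigl(\frac{|A|}{2}-|A_2\setminus B|\bigr)$ orbits of size $2$; summing gives $|A_2\setminus B|+\frac{|A|}{4}-\frac{|A_2\setminus B|}{2}=\frac{|A|}{4}+\frac{|A_2\setminus B|}{2}$ orbits in total, hence $2^{\frac{|A|}{4}+\frac{|A_2\setminus B|}{2}}$ such subsets.

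Next I would bound the number of ``bad'' inverse-closed subsets, namely those $S$ for which $\Aut(\Cay(A,S))$ strictly contains $\langle A,\iota\rangle$ (when $A$ has exponent $>2$) or strictly contains $A$ (when $A$ has exponent $2$). Following the group-theoretic framework of~\cite{DSV} already invoked for the directed statement, one analyses the possible overgroups of $\langle A,\iota\rangle$ inside the bipartite-graph-preserving group $\Sym(B)\times\Sym(A\setminus B)$; the abelian hypothesis forces such overgroups to be controlled by automorphisms of $A$ normalising or nearly normalising the relevant point stabilisers, and one counts the inverse-closed connection sets fixed (setwise) by a nontrivial such extra automorphism $\alpha$. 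For each candidate $\alpha$, the set $S$ must be a union of $\langle\iota,\alpha\rangle$-orbits on $A\setminus B$, which caps the number of choices by $2^{(\text{number of such orbits})}$; the worst case corresponds to $\alpha$ of small order acting with few fixed points, and carefully optimising over all $\alpha$ yields the exponent $\frac{11|A|}{48}$ in place of the directed bound's $\frac{3|A|}{8}$. The $(\log_2|A|)^2$ term absorbs the number of candidate automorphisms $\alpha$ (bounded via $|\Aut(A)|\le 2^{(\log_2|A|)^2}$ or a similar estimate), the $\frac{|A_2\setminus B|}{2}$ term reflects that the fixed points of $\iota$ can always be chosen freely even when restricted by $\alpha$ on average, and the additive constant $2$ covers a small number of sporadic overgroup configurations. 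Subtracting, the bad count is at most $2^{\frac{11|A|}{48}+\frac{|A_2\setminus B|}{2}+(\log_2|A|)^2+2}$, and whenever this is strictly less than the total, alternative~(1) holds.

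The final, and I expect hardest, step is to show that the only way alternative~(1) can fail is for the pair $(A,B)$ to fall into the two infinite families of alternative~(2). This is where Theorem~\ref{thrm:aa} enters: those families are precisely the obstruction to achieving bipartite Cayley index $c$, so the counting argument above cannot possibly succeed for them, and conversely one must verify that for every other pair $(A,B)$ the overgroup analysis genuinely produces the quantitative bound — in particular that no third family slips through. Concretely, I would isolate the structural feature (a certain ``too large'' stabiliser forced by the $C_4$-components interacting with the index-$2$ subgroup) that the proof of Theorem~\ref{thrm:aa} identifies, show it is the unique source of an unbounded defect, and check that outside it the extra-automorphism orbits are always numerous enough. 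The main obstacle will be making the case analysis of overgroups of $\langle A,\iota\rangle$ in the bipartite symmetric group tight enough to both extract the clean exponent $\frac{11|A|}{48}$ and to certify that the exceptional set is exactly the two families of~(2) and nothing more; handling the exponent-$2$ case (where $\iota=1$ and the bound must instead separate $A$ from its overgroups directly) in parallel with the exponent-$>2$ case without duplicating the entire argument is the secondary difficulty.
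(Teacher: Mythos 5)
Your opening count of the inverse-closed subsets of $A\setminus B$ via the $\iota$-orbits is exactly the paper's Lemma~\ref{l:4}, and your overall architecture (bound the exceptional connection sets, and locate the families in~(2) as the pairs where the automorphism analysis breaks down) is the right shape; your guess that the exceptions are precisely the pairs where some $\alpha\neq 1,\iota$ fixes every inverse-closed $S\subseteq A\setminus B$ is also correct in spirit. However, there is a genuine gap in your classification of the ``bad'' sets: you reduce failure of bipartite Cayley index $c$ to the existence of an extra automorphism $\alpha$ of $A$ with $S^\alpha=S$ and $B^\alpha=B$, and that is not sufficient. The tool that controls overgroups here is \cite[Theorem~4.3]{DSV}, which can only be applied after one knows $\nor G A=\langle A,\iota\rangle$ (this is what the automorphism count buys, together with discarding the sets with $\langle S\rangle<A$ so that connectivity forces every automorphism to preserve the bipartition), and even then its conclusion is not $G=\langle A,\iota\rangle$ outright: it leaves two further structured possibilities, namely (i) subgroups $1<H\le K<A$ with $S\setminus K$ a union of $H$-cosets (the analogue of the obstruction already needed in the directed case, here with the extra verification $H\le B$), and (ii) a decomposition $A=C\times Z$ with $C$ cyclic of order at least $4$, $Z$ elementary abelian of exponent $2$, and $S=S'\times S''$ with $S'\in\{\emptyset,\{1\},C,C\setminus\{1\}\}$ and $S''\subseteq Z$ --- an obstruction specific to undirected graphs that your proposal never mentions. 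Accordingly the paper must bound four classes of bad sets (Lemmas~\ref{l:3new}, \ref{l:22}, \ref{l:21} and~\ref{l:MYMYMY}), and the exponent $\tfrac{11|A|}{48}$ emerges as the worst of the coset count (Lemma~\ref{l:21}) and the automorphism count (Lemma~\ref{l:22}, which in one case leans on \cite[Lemma~5.5]{DSV}), not from optimising over $\alpha$ alone. Without bounding classes (i) and (ii), your concluding step ``whenever the bad count is below the total, alternative~(1) holds'' does not follow.

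Two smaller points. First, the exceptional families of~(2) surface inside the automorphism count itself: Lemma~\ref{l:22} must exclude the pairs of Theorem~\ref{thrm:aa} precisely because for those pairs there is an $\alpha\neq 1,\iota$ whose fixed and inverted elements cover $A\setminus B$, so every inverse-closed $S\subseteq A\setminus B$ is $\alpha$-invariant and no nontrivial bound is possible; the case analysis there (forcing $|A:T_1|=|A:T_{-1}|=2$ with $T_1,T_{-1}\nleq B$ and then pinning down $A\cong C_4\times C_2^\ell$ with $B=A_2$, or $A\cong C_4^2\times C_2^\ell$ with $A_2\le B$) is exactly the verification, which you correctly anticipated but left open, that no third family slips through. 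Second, the exponent-$2$ case needs no parallel argument: there every subset of $A\setminus B$ is inverse-closed and index $1$ means DRR, so Theorem~\ref{thrm:1} yields the required bound directly (using $|A_2\setminus B|=|A|/2$), rather than having to be run alongside the exponent-greater-than-$2$ case as you feared.
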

Theorem \ref{thrm:11} shows that the pairs in the statement of Theorem \ref{thrm:aa} are the only exceptional pairs and, more importantly, for any other possible pair $(A,B)$, the number of ``highly symmetric'' subsets (that is, inverse-closed subsets $S\subseteq A\setminus B$ with $\Cay(A,S)$ not having Cayley index $2$) is bounded above by a relatively slow growing function.

From Theorem \ref{thrm:11} we immediately obtain the following analogue of Corollary \ref{cor:1}.

\begin{corollary}\label{cor:11}For every positive real number $\varepsilon>0$, there exists a natural number $n_\varepsilon$ such that, for every  abelian group $A$ of order at least $n_\varepsilon$ and for every subgroup $B$ of $A$ having index $2$ and with $(A,B)$ not one of the pairs in Theorem~$\ref{thrm:aa}$ (or in Theorem~$\ref{thrm:11}~(2)$), we have
$$\frac{|\{S\mid S\subseteq A\setminus B, S=S^{-1}, \Cay(A,S) \,\mathrm{\,has\, Cayley\, index\, }c\}|}{|\{S\mid S\subseteq A\setminus B, S=S^{-1}\}|}\ge 1-\varepsilon,$$
where $c:=1$ when $A$ has exponent $2$ and  $c:=2$ when $A$ has exponent greater than $2$.
\end{corollary}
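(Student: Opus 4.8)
The plan is to read off Corollary~\ref{cor:11} from Theorem~\ref{thrm:11}, which already contains all the substantive work; what remains is a one-line estimate of a ratio of two explicit powers of~$2$.

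First I would fix $\varepsilon>0$, let $A$ be an abelian group with an index-$2$ subgroup $B$, and assume $(A,B)$ is \emph{not} one of the two exceptional families of Theorem~\ref{thrm:aa}; this is precisely the hypothesis that excludes alternative~(2) of Theorem~\ref{thrm:11}, so alternative~(1) must hold. By Theorem~\ref{thrm:11} the denominator of the displayed ratio is exactly $2^{\frac{|A|}{4}+\frac{|A_2\setminus B|}{2}}$, and by alternative~(1) the numerator is at least $2^{\frac{|A|}{4}+\frac{|A_2\setminus B|}{2}}-2^{\frac{11|A|}{48}+\frac{|A_2\setminus B|}{2}+(\log_2|A|)^2+2}$; here I use that ``$\Cay(A,S)$ has Cayley index $c$'' and ``$\Cay(A,S)$ has bipartite Cayley index $c$'' both mean $|\Aut(\Cay(A,S)):A|=c$, with $c=1$ in exponent~$2$ and $c=2$ otherwise.

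Next I would divide and simplify: cancelling the common factor $2^{\frac{|A|}{4}+\frac{|A_2\setminus B|}{2}}$ --- in particular the shared summand $\frac{|A_2\setminus B|}{2}$ appearing in the exponents --- the ratio is bounded below by
$$1-2^{\frac{11|A|}{48}-\frac{|A|}{4}+(\log_2|A|)^2+2}=1-2^{-\frac{|A|}{48}+(\log_2|A|)^2+2},$$
the only arithmetic input being $\frac{11}{48}-\frac{1}{4}=-\frac{1}{48}$, so that the linear term in $|A|$ carries a negative coefficient and hence eventually dominates $(\log_2|A|)^2$. Since the real function $n\mapsto -\frac{n}{48}+(\log_2 n)^2+2$ tends to $-\infty$, the quantity $2^{-\frac{|A|}{48}+(\log_2|A|)^2+2}$ tends to $0$ as $|A|\to\infty$, so taking $n_\varepsilon$ to be any integer past which this quantity is below $\varepsilon$ settles the claim.

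I expect no genuine obstacle at this stage: the difficulty is entirely absorbed into Theorem~\ref{thrm:11}, proved via the counting techniques adapted from~\cite{DSV}. The only point meriting a moment's care is verifying that excluding the exceptional pairs of Theorem~\ref{thrm:aa} is exactly what forces alternative~(1) of Theorem~\ref{thrm:11}; this is immediate since the two lists of pairs coincide verbatim.
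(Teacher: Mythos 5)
Your proof is correct and is exactly the argument the paper intends: the paper states Corollary~\ref{cor:11} as an immediate consequence of Theorem~\ref{thrm:11}, and your computation --- using the exact count $2^{\frac{|A|}{4}+\frac{|A_2\setminus B|}{2}}$ for the denominator, part~(1) of Theorem~\ref{thrm:11} for the numerator, and the observation that the resulting error exponent $-\frac{|A|}{48}+(\log_2|A|)^2+2$ tends to $-\infty$ --- is the intended one-line verification, including the correct identification of the exceptional pairs in Theorem~\ref{thrm:aa} with those in Theorem~\ref{thrm:11}~(2).
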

\begin{table}[!ht]
\begin{tabular}{|c|c|c|}\hline
$A$&$B$&bipartite Cayley index\\\hline
$C_4\times C_2^{\ell}$&$C_2^{\ell+1}$&not known for $\ell\ge 4$\\
$C_4\times C_4\times C_2^{\ell}$&$C_4\times C_2^{\ell+1}$&not known for $\ell\ge 2$\\

$C_2\times C_2\times C_2$ &$C_2\times C_2$&$6$\\
$C_2\times C_2\times C_2\times C_2$ &$C_2\times C_2\times C_2$&$24$\\
$C_2\times C_2\times C_2\times C_2\times C_2$ &$C_2\times C_2\times C_2\times C_2$&$72$\\
$C_2\times C_2\times C_2\times C_2\times C_2\times C_2$ &$C_2\times C_2\times C_2\times C_2\times C_2$&$4$\\

$C_2\times C_4$ &$C_4$&$6$\\
$C_2\times C_4$ &$C_2\times C_2$&$16$\\

$C_2\times C_8$&$C_2\times C_4$&$16$\\

$C_4\times C_4$&$C_4\times C_2$&$24$\\
$C_4\times C_2\times C_2$& $C_2\times C_2\times C_2$&$768$\\
$C_4\times C_2\times C_2$& $C_4\times C_2$&$24$\\

$C_3\times C_6$&$C_3\times C_3$&$8$\\

$C_2\times C_{12}$&$C_2\times C_6$&$4$\\

$C_2\times C_2\times C_6$&$C_2\times C_6$&$4$\\

$C_4\times C_8$& $C_4\times C_4$&$4$\\
$C_4\times C_8$& $C_2\times C_8$&$4$\\

$C_2\times C_2\times C_8$&$C_2\times C_2\times C_4$&$12$\\

$C_2\times C_4\times C_4$&$C_4\times C_4$&$12$\\
$C_2\times C_4\times C_4$&$C_2\times C_2\times C_4$&$128$\\

$C_2\times C_2\times C_2\times C_4$&$C_2\times C_2\times C_2\times C_2$&$786\, 432$\\
$C_2\times C_2\times C_2\times C_4$&$C_2\times C_2\times C_4$&$72$\\

$C_3\times C_{12}$&$C_3\times C_6$&$4$\\

$C_2\times C_2\times C_{12}$&$C_2\times C_2\times C_6$&$4$\\

$C_3\times C_3\times C_6$&$C_3\times C_3\times C_3$&$12$\\

$C_2\times C_2\times C_2\times C_8$&$C_2\times C_2\times C_2\times C_4$&$8$\\
$C_4\times C_4\times C_4$& $C_2\times C_4\times C_4$&$4$\\
$C_2\times C_2\times C_2\times C_2 \times C_4$&$C_2\times C_2\times C_2\times C_4$&$4$\\\hline
\end{tabular}\caption{Abelian groups and their index $2$ subgroups not admitting a bipartite Cayley graph with Cayley index $2$}\label{table:2}
\end{table}

Exactly as for Corollary \ref{cor:1}, Corollary \ref{cor:11}  says that, when the order of an abelian group $A$ is even and sufficiently large, most bipartite Cayley graphs over $A$  have bipartite Cayley index $2$,  aside from the two exceptional pairs described in Theorem~\ref{thrm:aa}.

Since the estimate in Theorem \ref{thrm:11} is rather explicit, we also obtain the following corollary.
\begin{corollary}\label{cor:22}Let $A$ be an abelian group and let $B$ be a subgroup of $A$ having index $2$. Let $c:=1$ when $A$ has exponent $2$ and let $c:=2$ when $A$ has exponent greater than $2$. Then, either there exists an inverse-closed subset $S$ of $A\setminus B$ such that $\Cay(A,S)$ has Cayley index $c$ or the pair $(A,B)$ is in Table~$\ref{table:2}$.
\end{corollary}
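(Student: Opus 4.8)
The plan is to deduce Corollary~\ref{cor:22} from the explicit estimate of Theorem~\ref{thrm:11}, the point being that the estimate is sharp enough to leave only a finite, explicit list of pairs $(A,B)$ to be inspected directly.

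First I would dispose of alternative~(2) of Theorem~\ref{thrm:11}. If $A\cong C_4\times C_2^\ell$ with $B\cong C_2^{\ell+1}$, or $A\cong C_4^2\times C_2^\ell$ with $B\cong C_4\times C_2^{\ell+1}$, then $A$ has exponent $4>2$, so $c=2$, and Theorem~\ref{thrm:aa} says that no inverse-closed $S\subseteq A\setminus B$ gives $\Cay(A,S)$ of bipartite Cayley index $2$. Hence no inverse-closed connection set in $A\setminus B$ realises Cayley index $c$, and these pairs must be recorded in Table~\ref{table:2}; indeed they are, as the first two rows and, for the smallest values of $\ell$, as separate entries.

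Next I would analyse alternative~(1). Writing $d:=|A_2\setminus B|$ and pulling the factor $2^{d/2}$ out of the lower bound in Theorem~\ref{thrm:11}(1), it becomes
$$2^{d/2}\Bigl(2^{\frac{|A|}{4}}-2^{\frac{11|A|}{48}+(\log_2|A|)^2+2}\Bigr),$$
which is positive exactly when $\frac{|A|}{48}>(\log_2|A|)^2+2$. A routine estimate shows this holds for every $|A|\ge n_0$, where $n_0$ is an explicit constant (one checks $n_0=2^{14}$ works). For such $A$ the number of inverse-closed $S\subseteq A\setminus B$ with $\Cay(A,S)$ of Cayley index $c$ is a positive integer, hence at least $1$, so such an $S$ exists and $(A,B)$ is not exceptional. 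This leaves the abelian groups $A$ of even order with $|A|<n_0$, each paired with one representative of every $\Aut(A)$-orbit of index-$2$ subgroups; for each such pair one would check, by running over inverse-closed $S\subseteq A\setminus B$ and computing $\Aut(\Cay(A,S))$, whether some $\Cay(A,S)$ has Cayley index $c$, and the pairs for which none does should be exactly those of Table~\ref{table:2}.

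The hard part will be making this final verification genuinely feasible. The bound $n_0=2^{14}$ is far too large for a brute-force search, since a pair $(A,B)$ carries $2^{|A|/4+|A_2\setminus B|/2}$ candidate inverse-closed connection sets, astronomically many already for $|A|$ in the low hundreds. The way around this is to revisit the proof of Theorem~\ref{thrm:11}: the estimate there is deliberately uniform, but the weak error term $2^{11|A|/48+(\log_2|A|)^2+2}$ is forced only by a short, explicit list of ``bad'' structural features of $(A,B)$; for every pair avoiding those features one should get an error term of the shape $2^{c'|A|}$ with $c'$ bounded strictly below $1/4$ and no logarithmic summand, which makes the count positive already for small $|A|$. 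The pairs exhibiting a bad feature are, in turn, confined to small order or to a few explicit families (those in Theorem~\ref{thrm:aa} together with a handful more), so one is left with a genuinely small collection of pairs --- of order bounded by a couple of hundred at most --- to settle by machine in \textsf{GAP} or \textsf{Magma}. Thus the substantive step beyond Theorem~\ref{thrm:11} is the structural bookkeeping inside its proof needed to localise the exceptions, after which the finite computation returns precisely Table~\ref{table:2}.
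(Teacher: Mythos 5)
Your reduction to Theorem~\ref{thrm:11} matches the paper's strategy: alternative~(2) consists exactly of the pairs excluded by Theorem~\ref{thrm:aa} (the first two rows of Table~\ref{table:2}), and alternative~(1) gives a nonempty count of good connection sets as soon as $\frac{|A|}{48}>(\log_2|A|)^2+2$; the paper works with the threshold $|A|\ge 8\,214$ where you take $2^{14}$. The genuine gap is in how you handle the remaining finite range, which you yourself identify as ``the hard part'' and then only sketch. First, your premise that a direct check is infeasible because a pair carries $2^{\frac{|A|}{4}+\frac{|A_2\setminus B|}{2}}$ candidate sets conflates existence with enumeration: to certify that $(A,B)$ is \emph{not} exceptional you only need to exhibit one witness $S$, and the paper does this by sampling $10\,000$ random inverse-closed subsets of $A\setminus B$ for every pair with $|A|\le 4\,096$ and computing the automorphism group of each resulting graph; exhaustive enumeration is required only for the pairs that do end up in Table~\ref{table:2}, and those all have $|A|\le 64$, where brute force is cheap. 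In the intermediate window $4\,096<|A|<8\,214$ the paper verifies, group by group, the counting inequality with the exact value of $|\Aut(A)|$ in place of the bound $2^{(\log_2|A|)^2}$.

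Second, your proposed substitute for the computation --- reworking the proof of Theorem~\ref{thrm:11} so that the error term loses its logarithmic summands and the bad pairs are confined to order ``a couple of hundred at most'' --- is asserted rather than proved, and it is doubtful as stated. The dominant error term arises from summing the bound of Lemma~\ref{l:22} over all nontrivial $\alpha\in\Aut(A)$, and for groups such as $A\cong C_4\times C_2^{\ell}$ one has $\log_2|\Aut(A)|$ of order $\ell^2$, comparable to $(\log_2|A|)^2$, so the purely analytic count stays negative until $|A|$ is in the thousands (for $|A|=4\,096$ of this shape it still fails); this is precisely why the paper cannot avoid machine verification up to $4\,096$ and an exact-$|\Aut(A)|$ check just above it. So the outline has the right shape, but the decisive finite verification is missing, and the structural shortcut you propose for it would need a genuinely new argument (or should simply be replaced by the randomized-search plus exhaustive-check scheme that the paper actually uses).
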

In the third column of Table~\ref{table:2}, we have computed $c(A,B)$, except for the two infinite pairs arising from Theorem~\ref{thrm:aa}.

We also prove the following unlabelled version of Theorem~\ref{thrm:11}.
\begin{theorem}\label{thrm:unlabelled2}Let $A$ be an abelian group  and let $B$ be a subgroup of $A$ having index $2$. Suppose that $(A,B)$ is not one of the pairs in Theorem~$\ref{thrm:aa}$. Then, the number of bipartite Cayley graphs (up to graph-isomorphism) over $A$ with bipartition $\{B,A\setminus B\}$ is $2^{\frac{|A|}{4}+\frac{|A_2\setminus B|}{2}+o(|A|)}$.
\end{theorem}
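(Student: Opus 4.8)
The plan is to deduce Theorem~\ref{thrm:unlabelled2} from Theorem~\ref{thrm:11} by a standard orbit-counting argument relative to the normaliser of $H:=\langle A,\iota\rangle$ in $\mathrm{Sym}(A)$. For the upper bound there is essentially nothing to do: every bipartite Cayley graph over $A$ with bipartition $\{B,A\setminus B\}$ is $\Cay(A,S)$ for an inverse-closed $S\subseteq A\setminus B$, and by the first assertion of Theorem~\ref{thrm:11} there are exactly $2^{\frac{|A|}{4}+\frac{|A_2\setminus B|}{2}}$ such subsets, so the number of isomorphism classes is at most $2^{\frac{|A|}{4}+\frac{|A_2\setminus B|}{2}}$, already of the required shape.

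For the lower bound, write $H:=\langle A,\iota\rangle$ (so $H=A$ when $A$ has exponent $2$, and $|H|=2|A|$ otherwise) and let $\mathcal{G}$ be the collection of inverse-closed $S\subseteq A\setminus B$ with $\Aut(\Cay(A,S))=H$, i.e.\ with $\Cay(A,S)$ of bipartite Cayley index $c$. Since $(A,B)$ is not one of the pairs in Theorem~\ref{thrm:aa}, alternative~(1) of Theorem~\ref{thrm:11} applies and gives
$$|\mathcal{G}|\ \ge\ 2^{\frac{|A|}{4}+\frac{|A_2\setminus B|}{2}}-2^{\frac{11|A|}{48}+\frac{|A_2\setminus B|}{2}+(\log_2|A|)^2+2};$$
since $\frac{11}{48}<\frac14$ and $(\log_2|A|)^2+2=o(|A|)$, this is at least $2^{\frac{|A|}{4}+\frac{|A_2\setminus B|}{2}-1}$ once $|A|$ is large. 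Next I would record the routine fact that, for $S,S'\in\mathcal{G}$, an isomorphism of graphs $\Cay(A,S)\to\Cay(A,S')$ — after post-composition with a right translation, which lies in $A\le\Aut(\Cay(A,S'))$ — may be taken to fix the identity vertex; such a map then carries the neighbourhood $S$ of the identity in $\Cay(A,S)$ onto the neighbourhood $S'$ of the identity in $\Cay(A,S')$ and conjugates $\Aut(\Cay(A,S))=H$ onto $\Aut(\Cay(A,S'))=H$. Hence $S'=S^{\varphi}$ for some $\varphi\in\nor{\mathrm{Sym}(A)}{H}$, so every graph-isomorphism class meeting $\mathcal{G}$ lies inside a single $\nor{\mathrm{Sym}(A)}{H}$-orbit and the number of isomorphism classes is at least $|\mathcal{G}|\,/\,|\nor{\mathrm{Sym}(A)}{H}|$.

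It then remains to bound $|\nor{\mathrm{Sym}(A)}{H}|$. As $A$ is a regular abelian subgroup of $\mathrm{Sym}(A)$ we have $\cent{\mathrm{Sym}(A)}{H}\le\cent{\mathrm{Sym}(A)}{A}=A$, so the kernel of the conjugation action of $\nor{\mathrm{Sym}(A)}{H}$ on $H$ has order at most $|A|$; the image is contained in $\Aut(H)$, and since a group of order $m$ needs at most $\log_2 m$ generators, $|\Aut(H)|\le|H|^{\log_2|H|}\le(2|A|)^{\log_2(2|A|)}$. Therefore $|\nor{\mathrm{Sym}(A)}{H}|\le|A|\cdot(2|A|)^{\log_2(2|A|)}=2^{o(|A|)}$, and combining with the previous paragraph the number of isomorphism classes is at least $2^{\frac{|A|}{4}+\frac{|A_2\setminus B|}{2}-1-o(|A|)}=2^{\frac{|A|}{4}+\frac{|A_2\setminus B|}{2}+o(|A|)}$. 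Together with the upper bound this is exactly the asserted count.

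I do not anticipate a genuine obstacle: the substantive work is already contained in Theorem~\ref{thrm:11}. The only place where its full force is needed — rather than just the inclusion $A\le\Aut(\Cay(A,S))$ — is in the claim that a graph isomorphism between members of $\mathcal{G}$ normalises $H$; this, plus the elementary estimate $|\nor{\mathrm{Sym}(A)}{H}|=2^{o(|A|)}$, is all that the unlabelled statement adds to the labelled one.
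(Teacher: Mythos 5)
Your proposal is correct, and its skeleton is the same as the paper's: upper bound from the count $2^{\frac{|A|}{4}+\frac{|A_2\setminus B|}{2}}$ of inverse-closed subsets of $A\setminus B$ (Lemma~\ref{l:4}), lower bound by showing that connection sets whose graph has Cayley index $c$ fall into isomorphism classes of size $2^{o(|A|)}$ and invoking Theorem~\ref{thrm:11}(1). The one genuine difference is how you control an identity-fixing isomorphism $\varphi$ between two such graphs. The paper observes that $\varphi$ conjugates $\Aut(\Gamma_1)=\langle A,\iota\rangle$ to $\Aut(\Gamma_2)=\langle A,\iota\rangle$ and then uses the exclusion of the exceptional pairs of Theorem~\ref{thrm:aa} to deduce $A^\varphi=A$, so that $\varphi\in\Aut(A)$ and each class has size at most $|\Aut(A)|\le 2^{(\log_2|A|)^2}$; this step is delicate precisely because $A$ need not be characteristic in the generalized dihedral group $\langle A,\iota\rangle$ (the problematic groups being of the shape $C_4\times C_2^\ell$). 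You instead stop at the observation $\varphi\in\nor{\mathrm{Sym}(A)}{H}$ with $H:=\langle A,\iota\rangle$ and bound that normaliser by $|\cent{\mathrm{Sym}(A)}{H}|\cdot|\Aut(H)|\le|A|\cdot(2|A|)^{\log_2(2|A|)}=2^{o(|A|)}$, which is coarser (your classes are bounded by $|A|\cdot|\Aut(H)|$ rather than $|\Aut(A)|$, though both are $2^{O((\log_2|A|)^2)}$ and thus equally good for the stated $o(|A|)$ asymptotics) but more robust: your unlabelled step does not use the non-exceptionality of $(A,B)$ at all, that hypothesis being needed only to place yourself in alternative~(1) of Theorem~\ref{thrm:11}. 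In particular you sidestep the characteristic-subgroup issue that the paper resolves via the exceptional-pair hypothesis, at the harmless cost of a slightly weaker explicit bound.
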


Based on the work in this paper and on some computer computations, we dare to make the following two conjectures.
\begin{conjecture}\label{conj:1}
{\rm There exists a positive integer $n$ such that, if $G$ is a group of order at least $n$ and  $B$ is a subgroup of $G$ having index $2$, then $\vec{c}(G,B)=1$.}
\end{conjecture}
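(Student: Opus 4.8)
We sketch a plan for approaching Conjecture~\ref{conj:1}.

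The plan is to prove Conjecture~\ref{conj:1} by the counting philosophy behind Theorem~\ref{thrm:1}: show that, for $|G|$ large enough, the number of subsets $S\subseteq G\setminus B$ for which $\Cay(G,S)$ is \emph{not} a bipartite DRR is strictly smaller than $2^{|G\setminus B|}=2^{|G|/2}$, so that in particular at least one $S$ yields a bipartite DRR. The abelian case is Corollary~\ref{cor:2}, so the genuinely new difficulty is non-abelian $G$. Fix $g\in G\setminus B$; then $G=B\cup Bg$ and, for $S\subseteq Bg$, the digraph $\Gamma:=\Cay(G,S)$ is bipartite with parts $\Delta:=B$ and $\Delta':=Bg$. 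Since $B$ acts regularly on each of $\Delta,\Delta'$ while every element of $Bg$ interchanges them, writing $\Aut(\Gamma)^{+}$ for the subgroup of $\Aut(\Gamma)$ fixing $\Delta$ setwise, one has $\Aut(\Gamma)=G$ if and only if $\Aut(\Gamma)^{+}=B$, equivalently the stabiliser of $1\in\Delta$ in $\Aut(\Gamma)$ is trivial.

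Identifying $\Delta$ and $\Delta'$ with $B$, the arcs of $\Gamma$ are encoded by a single bipartite connection set $S_0\subseteq B$, and right multiplications by the elements of $B$ embed $B$ as a regular subgroup of $\Aut(\Gamma)^{+}$. First I would dispose of the ``periodic'' $S_0$, that is those with $\{x\in B\mid S_0x=S_0\}\neq 1$: each nonidentity $x$ contributes at most $2^{|B|/2}$ such sets, so their total number is at most $|B|\cdot 2^{|B|/2}=o(2^{|G|/2})$, and for all remaining $S_0$ the group $\Aut(\Gamma)^{+}$ acts faithfully on $\Delta$, so that $\Aut(\Gamma)^{+}=B$ is equivalent to $\Aut(\Gamma)^{+}$ being regular on $\Delta$. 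At this point I would apply a Babai--Godsil-type dichotomy. In the first case $\nor{\Aut(\Gamma)^{+}}{B}\neq B$, so $\Aut(\Gamma)^{+}$ contains a nonidentity element normalising $B$ and fixing $1\in\Delta$; such an element forces $S_0$ to be invariant under some nontrivial element $\mu$ of the holomorph $\mathrm{Hol}(B)=B\rtimes\Aut(B)$ acting on $B$. Every such $\mu$ has at most $|B|/2$ fixed points on $B$ (a translation is fixed-point free, and otherwise the fixed-point set is a coset of the fixed subgroup of a nontrivial automorphism of $B$), hence at most $2^{3|B|/4}$ invariant subsets; summing over the at most $|B|\cdot|\Aut(B)|\le 2^{O((\log_2|G|)^{2})}$ possibilities for $\mu$ gives a total of $2^{3|G|/8+O((\log_2|G|)^{2})}=o(2^{|G|/2})$, which is exactly the shape of the error term in Theorem~\ref{thrm:1}.

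The second case, in which $B$ is self-normalising in $\Aut(\Gamma)^{+}$, is the crux. Here the local analysis on neighbourhoods fails, because the neighbourhood of a vertex of a bipartite digraph carries no information; this is precisely the obstruction flagged in the introduction. For abelian $A$ the present paper bypasses it through the structural results of~\cite{DSV} on the overgroups of a regular abelian group. For a general $B$ I would instead invoke the classification of finite simple groups: analyse the $\Aut(\Gamma)^{+}$-action on $\Delta$ through the O'Nan--Scott description of its (quasi)primitive sections, use the fact that $B$ is regular on $\Delta$ to restrict to affine type, almost simple type with a known transitive subgroup, and a small number of product-action configurations, and convert each configuration into an upper bound $2^{c|G|}$ with $c<1/2$ on the number of corresponding bad $S_0$ (the bipartition contributes a block system of size $2$, which is already built into the passage to $\Aut(\Gamma)^{+}$, and one must also track how a putative extra automorphism interacts with blocks arising from subgroups of $B$). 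Once this asymptotic inequality is in place, the finitely many groups of order below the resulting threshold $n$ would be settled by computer, as was done for Tables~\ref{table:1} and~\ref{table:2}.

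The main obstacle is this last step: there is at present no clean non-abelian replacement for the input of~\cite{DSV}, and extracting a numerically sufficient bound from the CFSG-based structure theory of permutation groups with a regular subgroup is delicate—most dangerously when $B$ has large elementary abelian sections, since the abelian analysis already shows (compare the two families in Theorem~\ref{thrm:aa}) that these are the configurations closest to producing extra symmetry. We expect the eventual conclusion to be that $\vec{c}(G,B)=1$ for every $G$ of order at least $n$, with the complete list of exceptional pairs being a short finite set containing the ten pairs of Table~\ref{table:1}.
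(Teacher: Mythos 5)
There is a genuine gap, and it is worth being precise about its nature: the statement you are addressing is stated in the paper as a \emph{conjecture}, offered without proof (the authors explicitly base it only on their abelian results and computer experiments), and your text is likewise a plan rather than a proof. The parts of your outline that are solid are exactly the parts that replicate the paper's existing machinery for abelian $A$: discarding the subsets with nontrivial ``period'' or, more generally, bounding the subsets invariant under a nonidentity element of $\mathrm{Hol}(B)$ fixing the bipartition is the non-abelian analogue of Lemma~\ref{l:1} (and your $2^{3|G|/8+O((\log_2|G|)^2)}$ bound matches the error term of Theorem~\ref{thrm:1}), and the reduction to the vertex-stabiliser being trivial parallels the proof of Theorem~\ref{thrm:1}. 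But all of that only disposes of the case $\nor{\Aut(\Gamma)^{+}}{B}\neq B$. The self-normalising case is where the paper, for abelian $A$, invokes the structural dichotomy of~\cite[Theorem~4.2]{DSV} for overgroups of a regular abelian subgroup, and it is precisely the step for which you offer no argument: you propose an O'Nan--Scott/CFSG analysis of $\Aut(\Gamma)^{+}$ with its regular subgroup $B$ and assert that each configuration ``converts'' into a bound $2^{c|G|}$ with $c<1/2$, but no such conversion is carried out, and it is far from routine --- knowing the abstract structure of a primitive or quasiprimitive overgroup does not by itself bound the number of connection sets it can stabilise, which is the quantity you actually need. Since that is the entire content of the conjecture beyond the abelian case already settled by Corollary~\ref{cor:2}, the proposal does not establish the statement; you acknowledge as much in your final paragraph.

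Two smaller points. First, your equivalence $\Aut(\Gamma)=G\iff\Aut(\Gamma)^{+}=B$ silently assumes every automorphism preserves the bipartition $\{B,Bg\}$, which requires $\langle S\rangle=G$ (connectedness); the paper handles this explicitly via the analogue of Lemma~\ref{l:3}, and you should too, since disconnected $S$ are not covered by your ``periodic $S_0$'' count. Second, your closing expectation of ``a short finite set'' of exceptional pairs is compatible with the conjecture only if all exceptions have order below the threshold $n$; as phrased, the conjecture asserts $\vec{c}(G,B)=1$ with \emph{no} exceptions once $|G|\ge n$, so any argument must ultimately show the exceptional configurations cannot recur for large non-abelian $G$, a point your sketch does not address.
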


\begin{conjecture}\label{conj:2}
{\rm There exists a positive integer $n$ such that, if $G$ is a group of order at least $n$ and  $B$ is a subgroup of $G$ having index $2$, then either
\begin{itemize}
\item $c(G,B)=1$, or
\item there exists $\alpha\in \Aut(G)$ with $\alpha\ne 1$, $B^\alpha=B$ and $G=B\cup \{g\in G\mid g^\alpha=g\}\cup \{g\in G\mid g^\alpha=g^{-1}\}$.
\end{itemize}}
\end{conjecture}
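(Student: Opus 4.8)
The plan is to read the two alternatives as genuine opposites: the second is an explicit obstruction, and the first should hold exactly when that obstruction is absent. First I would dispose of the obstruction direction. Suppose there is $\alpha\in\Aut(G)$ with $\alpha\neq 1$, $B^\alpha=B$ and $G=B\cup\{g\mid g^\alpha=g\}\cup\{g\mid g^\alpha=g^{-1}\}$. For any inverse-closed $S\subseteq G\setminus B$ and any $s\in S$ we have $s^\alpha\in\{s,s^{-1}\}$, and since $S=S^{-1}$ this forces $S^\alpha\subseteq S$, hence $S^\alpha=S$. A group automorphism preserving the connection set is a graph automorphism, so $\alpha\in\Aut(\Cay(G,S))$; as $\alpha$ fixes the identity and is not the identity, $\alpha$ is not a right translation, so $\alpha\notin G$ and $|\Aut(\Cay(G,S)):G|\ge 2$ for \emph{every} admissible $S$. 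Thus $c(G,B)\ge 2$. In fact condition (2) is precisely the assertion that some nontrivial $\alpha\in\Aut(G)$ with $B^\alpha=B$ fixes every inverse-closed subset of $G\setminus B$: testing on the doubletons $\{g,g^{-1}\}$ recovers the covering hypothesis.

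The substance is the converse. Assuming (2) fails, I would seek an inverse-closed $S\subseteq G\setminus B$ with $\Aut(\Cay(G,S))=G$ by a counting argument in the spirit of Theorems~\ref{thrm:1} and~\ref{thrm:11}. The natural first reduction is to the \emph{normal} case $G\trianglelefteq N:=\Aut(\Cay(G,S))$. When $G$ is normal the point stabiliser $N_1$ embeds into $\Aut(G)$ and equals $\{\alpha\in\Aut(G)\mid S^\alpha=S\}$; restricting to connected $S$ (generic, and harmless since we only need one good $S$) a parity argument forces $B^\alpha=B$ for each such $\alpha$, so $c(\Cay(G,S))=|N_1|=1$ precisely when no nontrivial automorphism of $G$ fixes $S$. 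The task thus becomes the production of a \emph{regular orbit}: an inverse-closed $S\subseteq G\setminus B$ with trivial stabiliser in $\Aut(G)_B$. Here I would bound the number of \emph{bad} subsets by $\sum_{1\neq\alpha\in\Aut(G)_B}2^{r(\alpha)}$, where $r(\alpha)$ is the number of orbits on $G\setminus B$ of the permutation group generated by $\alpha$ and the inversion map $g\mapsto g^{-1}$, and compare with the total number of admissible $S$, which is roughly $2^{|G\setminus B|/2}$ (for abelian $G$ this count is made precise in Theorem~\ref{thrm:11}).

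The crux is that this union bound succeeds only if, for every nontrivial $\alpha\in\Aut(G)_B$, the saving $|G\setminus B|/2-r(\alpha)$ grows fast enough to defeat $|\Aut(G)|$. Failure of (2) guarantees that each such $\alpha$ moves \emph{some} element off the set $\{g,g^{-1}\}$, whence $r(\alpha)<|G\setminus B|/2$; but a single moved pair yields only a bounded saving, whereas $|\Aut(G)|$ can be as large as $|G|^{O(\log|G|)}$ (already for elementary abelian $G$). The heart of the argument must therefore be a structural dichotomy for automorphisms of \emph{small support}: either $\alpha$ fixes or inverts all but boundedly many elements of $G\setminus B$---a configuration one would push into condition (2) via a rigidity analysis of $\Aut(G)_B$---or $\alpha$ has support linear in $|G|$, giving $r(\alpha)\le |G\setminus B|/2-\Omega(|G|)$ and so a contribution vastly outnumbered by the total. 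For abelian $G$ this is exactly the estimate extracted from~\cite{DSV} to prove Theorem~\ref{thrm:11}; the conjecture asks for its analogue over arbitrary groups.

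I expect the decisive obstacle to be the \emph{non-normal} case, which the above sets aside. When $G\not\trianglelefteq N$ the stabiliser $N_1$ need not consist of group automorphisms, and controlling the overgroups of a regular subgroup $G$ in $\mathrm{Sym}(G)$ is, in full generality, a problem that devolves onto the O'Nan--Scott theorem and the classification of finite simple groups. Showing that for large $|G|$ a random connected bipartite Cayley graph over $G$ is a normal Cayley graph---away from the explicit obstruction---is, I believe, the reason the statement is offered as a conjecture rather than a theorem: the abelian techniques of the present paper do not reach it, and a complete proof would presumably require a bipartite analogue of the Babai--Godsil normality results valid for arbitrary groups.
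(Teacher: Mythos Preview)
The statement you were handed is Conjecture~\ref{conj:2}: it is posed in the paper as an open problem, and the paper contains \emph{no} proof of it. So there is nothing to compare your attempt against. You appear to realise this yourself in your final paragraph, where you explain why ``the statement is offered as a conjecture rather than a theorem''; that diagnosis is accurate and matches the paper's own stance (the authors merely remark that the second alternative covers all abelian groups and point to the literature on automorphisms inverting many elements).

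What you have written is therefore not a proof but a useful heuristic unpacking. Your verification that condition~(2) forces $c(G,B)\ge 2$ is correct and is exactly the sense in which the two alternatives are opposites; the paper does not spell this out but it is implicit in the way the conjecture is phrased. Your sketch of the ``normal'' case---reducing to a regular-orbit problem for $\Aut(G)_B$ acting on inverse-closed subsets of $G\setminus B$, and bounding bad subsets via orbit counts of $\langle\alpha,\iota\rangle$---is the natural bipartite analogue of the abelian argument behind Theorem~\ref{thrm:11}, and your identification of the two obstacles (automorphisms of small support, and the non-normal overgroup case) is on target. None of this, however, constitutes a proof: the small-support dichotomy you describe is not established for general $G$, and the non-normal case is genuinely open. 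In short, your proposal is a correct explanation of why the conjecture is hard, not a proof of it, and the paper offers no proof either.
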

Clearly, the groups satisfying the second condition in Conjecture \ref{conj:2} include all abelian groups.
At the time of this writing, we are not sure if the groups satisfying the second condition in Conjecture~\ref{conj:2} might have a meaningful and useful classification. However, we observe that the work of Fitzpatrick, Hegarty, Liebeck and MacHale~\cite{Fitz,HeMa,LMacH} on groups admitting automorphisms inverting many elements seems to be relevant.

\section{Preliminary facts}\label{s: 2}

In what follows we use repeatedly the following facts.
\begin{enumerate}
\item Let $X$ be a finite group. Since a chain of subgroups of $X$ has length at most $\lfloor\log_2|X|\rfloor$, $X$ has a generating set of cardinality at most $\lfloor \log_2|X|\rfloor\le \log_2|X|$.

\item Any automorphism of $X$ is uniquely determined by the images of the elements of a generating set for $X$. Therefore $|\Aut(X)|\le |X|^{\lfloor \log_2|X|\rfloor}\le 2^{(\log_2|X|)^2}$.

\item Any subgroup $Y$ of $X$ is determined by a generating set, which has cardinality at most $\lfloor \log_2|Y|\rfloor\le \lfloor \log_2|X|\rfloor$. Therefore $X$ has at most
$|X|^{\lfloor\log_2|X|\rfloor}\le 2^{(\log_2|X|)^2}$ subgroups.

\item Let $A$ be an abelian group. Then $A$ has at most $|A|$ subgroups $H$ with $|H|$ a prime number. Similarly, $A$ has at most $|A|$ subgroups $K$ with $|A:K|$ a prime number.
\item Let $X$ be a finite group, let $Y$ be a subgroup of $X$ of index $2$ and let $Z$ be a proper subgroup of $X$. If $Z\le Y$, then $|Z\setminus Y|=0$. If $Z\nleq Y$, then $|Z\setminus Y|=|Z|/2\le (|X|/2)/2=|X|/4$.  Therefore, in either case, $|Z\setminus Y|\le |X|/4$.
\end{enumerate}

\section{Existence and asymptotic enumeration of bipartite Cayley digraphs}\label{sec: 33}

\begin{lemma}\label{l:3}
Let $A$ be an abelian group and let $B$ be a subgroup of $A$ having index $2$. The number of subsets $S$ of $A\setminus B$ with $\langle S\rangle $ a proper subgroup of $A$ is at most $2^{\frac{|A|}{4}+\log_2|A|}$.
\end{lemma}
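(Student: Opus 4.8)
The plan is to reduce the count to subsets contained in a \emph{maximal} subgroup of $A$, rather than in an arbitrary proper subgroup of $A$, because this is where the decisive saving comes from. Suppose $S\subseteq A\setminus B$ with $H:=\langle S\rangle$ a proper subgroup of $A$. Since $A$ is abelian, hence nilpotent, every maximal subgroup of $A$ has prime index; in particular $H$ is contained in some maximal subgroup $M$ of $A$, and $|A:M|$ is prime. As $S\subseteq H\subseteq M$ and $S\subseteq A\setminus B$, we get $S\subseteq M\setminus B$. Therefore
$$\bigl|\{S\subseteq A\setminus B\mid \langle S\rangle\text{ is a proper subgroup of }A\}\bigr|\le \sum_{M}2^{|M\setminus B|},$$
where $M$ ranges over the maximal subgroups of $A$.

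Next I would bound each summand and the number of summands separately. Each such $M$ is a proper subgroup of $A$, so item~$(5)$ of Section~\ref{s: 2} gives $|M\setminus B|\le |A|/4$, whence $2^{|M\setminus B|}\le 2^{|A|/4}$. Since every maximal subgroup of $A$ has prime index, item~$(4)$ of Section~\ref{s: 2} shows that $A$ has at most $|A|$ maximal subgroups. Combining these two estimates,
$$\bigl|\{S\subseteq A\setminus B\mid \langle S\rangle\text{ is a proper subgroup of }A\}\bigr|\le |A|\cdot 2^{\frac{|A|}{4}}=2^{\frac{|A|}{4}+\log_2|A|},$$
which is the claimed bound.

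The only real subtlety — and what blocks the cruder argument — is that $A$ may have exponentially many proper subgroups (item~$(3)$ of Section~\ref{s: 2} only bounds their number by $2^{(\log_2|A|)^2}$), so one cannot afford to sum $2^{|H\setminus B|}$ over all proper subgroups $H$. Restricting to maximal subgroups, of which there are at most $|A|$ precisely because $A$ is nilpotent, is exactly what produces the mild polynomial factor $|A|=2^{\log_2|A|}$ rather than the superpolynomial $2^{(\log_2|A|)^2}$. No further case analysis is needed.
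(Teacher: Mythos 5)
Your proof is correct and follows essentially the same route as the paper: both reduce to subsets generated inside a subgroup of prime index (equivalently, a maximal subgroup of the abelian group $A$), bound each summand by $2^{|A|/4}$ via item~(5) of Section~\ref{s: 2}, and bound the number of such subgroups by $|A|$ via item~(4). No issues.
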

\begin{proof}
Set $N:=|\{
S\subseteq A\setminus  B\mid
\langle S\rangle < A\}|$.
Clearly,
\begin{align*}
\{S\subseteq A\setminus B\mid \langle S\rangle<A\}&=\bigcup_{\substack{C<A\\|A:C|\textrm{ prime}}}\{S\subseteq A\setminus B\mid \langle S\rangle\le C \}.
\end{align*}
Since $\{S\subseteq A\setminus B\mid \langle S\rangle \le C\}=\{S\mid S\subseteq C\setminus (C\cap  B)\}$, we have
\begin{align*}
N&
\le
\sum_{\substack{C<A\\|A:C|\textrm{ prime}}}
|\{S\mid S\subseteq C\setminus (C\cap B)\}|\le
\sum_{\substack{C<A\\|A:C|\textrm{ prime}}}
2^{\frac{|A|}{4}}\le
2^{\frac{|A|}{4}+\log_2|A|},
\end{align*}
where in the second and in the third inequality we used the facts listed in Section~\ref{s: 2}.
\end{proof}

\begin{lemma}\label{l:1}Let $A$ be a group, let $B$ be a subgroup of $A$ having index $2$ and let $\alpha$ be a non-identity automorphism of $A$ with $B^\alpha=B$. The number of subsets $S$ of $A\setminus B$ with $S^\alpha=S$ is at most $2^{\frac{3|A|}{8}}$.
\end{lemma}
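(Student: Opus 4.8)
The plan is to count subsets $S\subseteq A\setminus B$ that are $\alpha$-invariant by grouping the elements of $A\setminus B$ into $\langle\alpha\rangle$-orbits: since $\alpha$ fixes $B$ setwise, it also fixes $A\setminus B$ setwise, and an $\alpha$-invariant $S$ is exactly a union of $\langle\alpha\rangle$-orbits contained in $A\setminus B$. Hence the number of such $S$ equals $2^{k}$, where $k$ is the number of $\langle\alpha\rangle$-orbits on $A\setminus B$. So the whole lemma reduces to the combinatorial estimate $k\le \tfrac{3|A|}{8}$, i.e. to showing that $\alpha$ cannot have too many short orbits on the coset $A\setminus B$.

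First I would dispose of the fixed points. Let $F:=\{a\in A\setminus B\mid a^\alpha=a\}$ be the set of fixed points of $\alpha$ outside $B$, and let $f:=|F|$; the remaining $|A\setminus B|-f=\tfrac{|A|}{2}-f$ elements lie in orbits of length at least $2$, contributing at most $\tfrac12(\tfrac{|A|}{2}-f)$ orbits. Thus $k\le f+\tfrac12(\tfrac{|A|}{2}-f)=\tfrac{|A|}{4}+\tfrac{f}{2}$. So it suffices to prove $f\le \tfrac{|A|}{4}$, because then $k\le \tfrac{|A|}{4}+\tfrac{|A|}{8}=\tfrac{3|A|}{8}$, giving exactly the claimed bound $2^{3|A|/8}$.

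The heart of the argument is therefore the claim $|\{a\in A\setminus B\mid a^\alpha=a\}|\le |A|/4$. I would argue as follows. Pick any $g\in A\setminus B$ with $g^\alpha=g$ if one exists (otherwise $f=0$ and we are done). The fixed-point set $C:=\cent{A}{\alpha}=\{a\in A\mid a^\alpha=a\}$ is a subgroup of $A$, and since $\alpha\ne 1$ it is a proper subgroup, so $|C|\le |A|/2$. Now $F=C\setminus B=C\setminus(C\cap B)$ is a coset of the index-(at most $2$) subgroup $C\cap B$ in $C$ — it is nonempty precisely when $C\nleq B$ — so $|F|=|C|/2\le |A|/4$. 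This is exactly fact~(5) in Section~\ref{s: 2} applied with $X=A$, $Y=B$, $Z=C$: whenever $C$ is a proper subgroup, $|C\setminus B|\le |A|/4$. Combining $f=|C\setminus B|\le |A|/4$ with the orbit-counting bound of the previous paragraph yields $k\le 3|A|/8$ and hence the number of $\alpha$-invariant subsets of $A\setminus B$ is at most $2^{3|A|/8}$, as required.

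The only subtle point is making sure the bookkeeping of orbit lengths is tight enough: one must use that every non-fixed element sits in an orbit of length $\ge 2$ (which halves its contribution) together with the structural input that the fixed points form (a coset of) a proper subgroup. Neither ingredient alone suffices — without the subgroup bound on $C$ one only gets $k\le |A|/2$. I expect no real obstacle here; the argument is short once one sees that $\alpha$-invariant subsets of $A\setminus B$ biject with subsets of the orbit set, and that $\cent{A}{\alpha}$ being proper is the whole content of the hypothesis $\alpha\ne 1$.
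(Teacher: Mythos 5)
Your proposal is correct and follows essentially the same route as the paper: both count $\langle\alpha\rangle$-orbits on $A\setminus B$, observe that the singleton orbits are exactly the elements of $\cent A\alpha\setminus B$ where $\cent A\alpha$ is a proper subgroup (since $\alpha\ne 1$), so there are at most $|A|/4$ of them, and conclude that the number of orbits is at most $\tfrac{|A|}{4}+\tfrac{|A|}{8}=\tfrac{3|A|}{8}$. No gaps; the bookkeeping matches the paper's inequalities exactly.
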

\begin{proof}
Since $B$ is $\alpha$-invariant, so is $A\setminus B$. Let $O_1,\ldots,O_\ell$ be the orbits of $\langle\alpha\rangle$ on $A\setminus B$. If $S\subseteq A\setminus B$ is $\alpha$-invariant, then $S$ is a union of some of $O_1,\ldots,O_\ell$ and hence
\begin{equation}\label{eq:1}
|\{S\subseteq A\setminus B\mid S^\alpha=S\}|=2^\ell.
\end{equation}

The orbits of $\langle\alpha\rangle$ on $A$ of cardinality one correspond exactly to the elements of $\cent A{\alpha}:=\{a\in A\mid a^\alpha=a\}$, whereas the orbits of $\langle\alpha\rangle$ on $A\setminus \cent A \alpha$ have cardinality at least $2$.  Now, observing that $|\cent A\alpha|\le |A|/2$ and that
\[
|\cent A\alpha\cap(A\setminus B)|=
\begin{cases}
0&\textrm{ when }\cent A\alpha\le B,\\
|\cent A \alpha \cap B|=|\cent A\alpha|/2&\textrm{ when }\cent A\alpha\nleq B,
\end{cases}
\] we get
\begin{align}\label{eq:2}
\ell&\le
|\cent A{\alpha}\cap (A\setminus B)|
+
\frac{
|(A\setminus B)\setminus (\cent A{\alpha}\cap (A\setminus B))|}{2}
=\frac{|\cent A{\alpha}\cap (A\setminus B)|}{2}+\frac{|A\setminus B|}{2}\\\nonumber
&= \frac{|\cent A{\alpha}\cap (A\setminus B)|}{2}+\frac{|A|}{4}\le \frac{|\cent A\alpha|}{4}+\frac{|A|}{4}\le \frac{|A|/2}{4}+\frac{|A|}{4}=\frac{3}{8}|A|.
\end{align}

The proof now follows from \eqref{eq:1} and \eqref{eq:2}.
\end{proof}

\begin{lemma}\label{l:2}Let $A$ be a group, let $B$ be a subgroup of $A$ having index $2$ and let $H$ and $K$ be subgroups of $A$ with $1<H\le K<A$ and $H\le B$. The number of subsets $S$ of $A\setminus B$ such that $S\setminus K$ is a union of $H$-cosets is at most $2^{\frac{3|A|}{8}}$.
\end{lemma}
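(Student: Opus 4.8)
The plan is to count the admissible subsets $S$ directly, by splitting $A\setminus B$ according to membership in $K$ and exploiting the coset structure of $H$. First I would record two elementary observations about the cosets of $H$ (the argument is the same for left or right cosets, so the convention is irrelevant). Since $H\le B$, every coset of $H$ is contained in $B$ or in $A\setminus B$; and since $H\le K$, every coset of $H$ that meets $K$ is contained in $K$. Consequently $A\setminus B$ decomposes as the disjoint union of $K\setminus B$ and $(A\setminus B)\setminus K$, and the second piece is a disjoint union of exactly $m:=(|A\setminus B|-|K\setminus B|)/|H|$ cosets of $H$, each disjoint from $K$.

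Next I would observe that a subset $S$ of the type being counted is determined by two independent choices. The intersection $S\cap K$, which equals $S\cap(K\setminus B)$ because $S\subseteq A\setminus B$, is completely unconstrained, since it is discarded when forming $S\setminus K$; this gives $2^{|K\setminus B|}$ possibilities. The remaining part $S\setminus K$ must be a union of cosets of $H$, and by the decomposition above the only cosets available are the $m$ cosets making up $(A\setminus B)\setminus K$; this gives $2^m$ possibilities. Hence the number of admissible $S$ is at most $2^{|K\setminus B|}\cdot 2^m$.

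Finally I would estimate the exponent $|K\setminus B|+m$. Since $H\ne 1$, we have $|H|\ge 2$, so $m\le (|A\setminus B|-|K\setminus B|)/2$, and therefore
$$|K\setminus B|+m\le \frac{|K\setminus B|}{2}+\frac{|A\setminus B|}{2}=\frac{|K\setminus B|}{2}+\frac{|A|}{4}.$$
Because $K$ is a proper subgroup of $A$ and $B$ has index $2$, the last fact listed in Section~\ref{s: 2} gives $|K\setminus B|\le |A|/4$ (this also covers the degenerate case $K\le B$, where $|K\setminus B|=0$), so the exponent is at most $|A|/8+|A|/4=3|A|/8$, which is the claimed bound. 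I do not expect any genuine difficulty in this argument; the only subtle points are the two coset observations — precisely where the hypotheses $H\le B$ and $H\le K$ are used — and the correct identification of $S\cap K$ as a free parameter versus $S\setminus K$ as the constrained one. The hypothesis $H\ne 1$ enters only through $|H|\ge 2$, and $K<A$ only through the cited fact from Section~\ref{s: 2}.
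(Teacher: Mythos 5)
Your proposal is correct and follows essentially the same route as the paper: split $S$ into the free part $S\cap K\subseteq K\setminus B$ and the constrained part $S\setminus K$, which must be a union of the $H$-cosets filling $(A\setminus B)\setminus K$, then bound the exponent using $|H|\ge 2$ and $|K\setminus B|\le |A|/4$. The only cosmetic difference is that the paper reaches the final estimate via $|K\setminus B|\le |K|/2\le |A|/4$ rather than quoting fact (5) of Section~\ref{s: 2} directly, which amounts to the same bound.
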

\begin{proof}
Observe that $A\setminus (K\cup B)$ is a union of $H$-cosets because $H\le K\cap B$.
Set $$\mathcal{N}:=\{
S\subseteq A\setminus  B\mid
S\setminus K \textrm{ is a union of }H\textrm{-cosets}
\}.$$ If $S\in \mathcal{N}$, then $S\cap K$ is an arbitrary subset of $K\setminus B$ and hence we have $2^{|K\setminus B|}$ choices for $S\cap K$. From this it follows
\begin{align*}
|\mathcal{N}|&=2^{|K\setminus B|+\frac{|(A\setminus B)\setminus (K\setminus B)|}{|H|}}
\le 2^{|K\setminus B|+\frac{|(A\setminus B)\setminus (K\setminus B)|}{2}}=
 2^{|K\setminus B|+\frac{|A\setminus B|}{2}-\frac{|K\setminus B|}{2}}
\\\nonumber
&\le 2^{\frac{|K\setminus B|}{2}+\frac{|A\setminus B|}{2}}
\le 2^{\frac{|K|}{4}+\frac{|A|}{4}}\le 2^{\frac{|A|}{8}+\frac{|A|}{4}}=2^{\frac{3|A|}{8}}.\qedhere
\end{align*}
\end{proof}

\begin{proof}[Proof of Theorem $\ref{thrm:1}$]We partition the set $2^{A\setminus B}:=\{S\mid S\subseteq A\setminus B\}$ in (not necessarily disjoint)  subsets:
\begin{align*}
\mathcal{A}_1:=&\{S\in 2^{A\setminus B}\mid \langle S\rangle <A\},\\
\mathcal{A}_2:=&\{S\in 2^{A\setminus B}\mid \textrm{there exists }\alpha\in \Aut(A) \textrm{ with }\alpha\ne 1, S^\alpha=S\textrm{ and } B^\alpha= B\},\\
\mathcal{A}_3:=&\{S\in 2^{A\setminus B}\mid \textrm{there exist two subgroups }H \textrm{ and }K \textrm{ with }1<H\le K<A, H\le B,\\
&\quad |A:K| \textrm{ and }|H| \textrm{ both prime numbers,}  \textrm{ and }S\setminus K \textrm{ is a union of }H\textrm{-cosets}\},\\
\mathcal{A}_4:=&2^{A\setminus B}\setminus (\mathcal{A}_1\cup\mathcal{A}_2\cup \mathcal{A}_3).
\end{align*}

From Lemma \ref{l:3},
\begin{equation}\label{eq:4}|\mathcal{A}_1|\le 2^{\frac{|A|}{4}+\log_2|A|}.
\end{equation}
Observe now that, if $S\in 2^{A\setminus B}\setminus \mathcal{A}_1$, then $\Cay(A,S)$ is connected and hence $\{ B,A\setminus  B\}$ is the only bipartition of $\Cay(A,S)$. In particular, every automorphism of $\Cay(A,S)$ must preserve the bipartition  $\{ B, A\setminus B\}$.

From Lemma \ref{l:1},
\begin{equation}\label{eq:5}
|\mathcal{A}_2|\le 2^{\frac{3|A|}{8}}(|\Aut(A)|-1)\le 2^{\frac{3|A|}{8}+(\log_2|A|)^2}.
\end{equation}

Since $A$ contains at most $|A|^2$ subgroups $H$ and $K$ with $|H|$ and $|A:K|$ both primes, Lemma \ref{l:2} yields
\begin{equation}\label{eq:6}
|\mathcal{A}_3|\le 2^{\frac{3|A|}{8}+2\log_2|A|}.
\end{equation}

\smallskip

\noindent\textsc{Claim: }For every $S\in \mathcal{A}_4$, $\Cay(A,S)$ is a  bipartite DRR with  bipartition $\{B,A\setminus B\}$.

\smallskip

\noindent  Let $S\in \mathcal{A}_4$, let $\Gamma:=\Cay(A,S)$ and let $G:=\Aut(\Gamma)$. As $S\notin \mathcal{A}_1$, $\Gamma$ is connected, bipartite and $\{B,A\setminus B\}$ is the only bipartition of $\Gamma$.

Since $\Gamma$ is a Cayley digraph over $A$, the group $A$ is embedded in $G$ via its right regular representation. Thus we may identify $A$ as a subgroup of $G$, and we do so. Let $G_1$ be the stabilizer of the vertex $1$ of $\Gamma$. Since  $1\in B$, the group $G_1$ fixes setwise the two parts $B$ and $A\setminus B$ of the bipartition of $\Gamma$, that is, $B^\alpha=B$ for each $\alpha\in G_1$.

Let  $N:=\nor{G_1}A $. Given $\alpha\in N$, we see that $\alpha$ acts as an automorphism on $A$; moreover, $S^\alpha=S$ and $B^\alpha=B$ because $\alpha$ is an automorphism of $\Gamma$ fixing $1$. Thus $N\le \{\alpha\in \Aut(A)\mid S^\alpha=S, B^\alpha=B\}$. Since $S\notin\mathcal{A}_2$, we deduce $N=1$. Thus $A$ is self-normalizing in $G$, that is, $A=\nor G A$. Therefore, since $A$ is abelian, we are in the position to apply~\cite[Theorem $4.2$]{DSV}, see also~\cite[Definition $4.1$]{DSV} for some terminology. We deduce that either $G=A$ and $\Gamma$ is a DRR, or there exist two subgroups $H'$ and $K'$ of $A$ with $1<H'\le K'<A$ and with $S\setminus K'$ a union of $H'$-cosets. We show that the latter possibility yields $S\in\mathcal{A}_3$, contradicting our choice of $S$. Thus, arguing by contradiction, let $H'$ and $K'$ be subgroups of $A$ with $1<H'\le K'\le A$ and with $S\setminus K'$ a union of $H'$-cosets. Let $H\le H'$ and let $K\ge K'$ with $|A:K|$ and $|H|$ both prime numbers. Observe that, since  $H\le H'$ and $K\ge K'$, the set $S\setminus K$ is a union of $H$-cosets. Now, to deduce that $S\in\mathcal{S}_3$, it suffices to show that $H\le B$. Since $\Gamma$ is connected, $S\nsubseteq K$ and hence there exists $s\in S\setminus K$.  Since $s\in S\setminus K$, we have $sH\subseteq S$; therefore, for every $h\in H$, we have $sh\in A\setminus B$. As $s\in A\setminus B$ and $|A:B|=2$, this implies $h\in B$, for every $h\in H$, that is,  $H\le B$.  Therefore $S\in \mathcal{A}_3$, a contradiction. Our claim is now proven.~$_\blacksquare$

\smallskip

Now, the proof  follows immediately from the previous claim,~\eqref{eq:4},~\eqref{eq:5} and~\eqref{eq:6}, and a computation.
\end{proof}

\begin{proof}[Proof of Corollary~$\ref{cor:1new}$]
Let
\begin{align*}
\mathcal{B}&:=\{B\mid B\le A,|A:B|=2\},\\
\mathcal{S}&:=\{S\subseteq A\mid \Cay(A,S)\textrm{ is a bipartite Cayley
digraph}\},\\
\mathcal{D}&:=\{S\subseteq A\mid \Cay(A,S)\textrm{ is a bipartite DRR}\},\\
\mathcal{S}_B&:=\{S\subseteq A\setminus B\mid \Cay(A,S)\textrm{ is a bipartite Cayley digraph with bipartition }\{B,A\setminus B\}\},\\
\mathcal{D}_B&:=\{S\subseteq A\setminus B\mid \Cay(A,S)\textrm{ is a bipartite DRR with bipartition }\{B,A\setminus B\}\}.
\end{align*}
We aim to prove that $|\mathcal{D}|/|\mathcal{S}|\to 1$ as $|A|\to \infty$. Observe that
$$\mathcal{S}=\bigcup_{B\in\mathcal{B}}\mathcal{S}_B,\,\,\mathcal{D}=\bigcup_{B\in \mathcal{B}}\mathcal{D}_B.$$
For $B\in \mathcal{B}$, we have $|\mathcal{S}|\le |\mathcal{B}||\mathcal{S}_B|=|\mathcal{B}|2^{\frac{|A|}{2}}$. Moreover, using the inclusion-exclusion principle, we have
$$|\mathcal{D}|\ge \sum_{B\in\mathcal{B}}|\mathcal{D}_B|-\frac{1}{2}\sum_{\substack{B_1,B_2\in \mathcal{B}\\B_1\ne B_2}}|\mathcal{D}_{B_1}\cap \mathcal{D}_{B_2}|.$$
If $S\in\mathcal{D}_{B_1}\cap\mathcal{D}_{B_2}$, then $S\subseteq (A\setminus B_1)\cap (A\setminus B_2)=A\setminus (B_1\cup B_2)$ and hence we have at most $2^{\frac{|A|}{4}}$ possibilities for $S$. Therefore $|\mathcal{D}_{B_1}\cap\mathcal{D}_{B_2}|\le 2^{\frac{|A|}{4}}$. Using Theorem~\ref{thrm:1}, we get
$$|\mathcal{D}|\ge |\mathcal{B}|(2^{\frac{|A|}{2}}-3\cdot 2^{\frac{3|A|}{8}+(\log_2|A|)^2})-\frac{(|\mathcal{B}|-1)|\mathcal{B}|}{2}2^{\frac{|A|}{4}}.$$
Thus
$$\frac{|\mathcal{D}|}{|\mathcal{S}|}\ge \frac{|\mathcal{D}|}{|\mathcal{B}|2^{\frac{|A|}{2}}}\ge 1-3\cdot 2^{-\frac{|A|}{8}+(\log_2|A|)^2}-\frac{(|\mathcal{B}|-1)}{2}2^{-\frac{|A|}{4}}\to 1,$$
as $|A|\to\infty$.
\end{proof}

\begin{proof}[Proof of Corollary $\ref{cor:2}$]
Let $A$ be an abelian group and let $B$ be a subgroup of $A$ having index $2$. If $|A|\ge 744$, then a computation shows that $|A|/2>3|A|/8+(\log_2|A|)^2+2$ and hence, by Theorem \ref{thrm:1}, there exists a subset $S\subseteq A\setminus B$ with $\Cay(A,S) $ a DRR.

Suppose then $|A|<744$. In this case the proof follows with the invaluable help of the computer algebra system \texttt{magma}~\cite{magma}. Except in the case $A=C_2^6$ all the computations are straightforward. We give some details of these computations. Except for the pairs listed in Table \ref{table:1}, we generate at random $10\,000$ subsets $S$ of $A\setminus B$ and we check whether $\Cay(A,S)$ is a DRR: in all cases, we find a DRR among our digraphs. When $(A,B)$ is one of the pairs in Table \ref{table:1} with $A\neq C_2^6$, we construct all subsets $S$ of $A\setminus B$ and we compute $|\Aut(\Cay(A,S)):A|$; therefore we compute $\vec{c}(A,B)$ by brute force. When $A=C_2^6$ and  $B$ has index $2$ in $A$ this naive approach does not work  because we have  $2^{ |A\setminus B|}=2^{32}$ subsets to check.

Suppose $A=C_2^6$ and  $B$ has index $2$ in $A$. We aim to prove that $\vec{c}(A,B)=4$. We identify $A$ with the $6$-dimensional vector space $\mathbb{F}_2^6$ of column vectors over the field $\mathbb{F}_2$ of size $2$ and we identify $B$ with the hyperplane of $A$ with equation $x_1+x_2+x_3+x_4+x_5+x_6=0$. Let $S$ be a subset of $A\setminus B$ with $\vec{c}(A,B)=|\Aut(\Cay(A,S)):A|$.  Write $\Gamma:=\Cay(A,S)$, $s:=|S|$, $G:=\Aut(A)$ and $H:=\{\alpha\in G\mid B^\alpha=B\}$. Clearly, $G\cong\mathrm{GL}_6(2)$ and $H\cong \mathrm{AGL}_5(2)$.

Replacing $S$ with $(A\setminus B)\setminus S$ if necessary, we may assume that $s=|S|\le|A\setminus B|/2=16$. If $\langle S\rangle<A$, then $\Gamma$ is disconnected and we can apply Table~\ref{table:1} to the elementary abelian $2$-group $\langle S\rangle\cong C_2^\ell$ with $\ell\le 5$. For instance, if $\langle S\rangle\cong C_2^5$, then $|\Aut(\Cay(A,S)):A|\ge |\Aut(\Cay(C_2^5,S))\,\mathrm{wr} \,C_2|/2^6\ge (72\cdot 2^5)^2\cdot 2/2^6=2!\cdot 72^2\cdot 2^4$. Thus, we obtain  $$|\Aut(\Cay(A,S)):A|\ge\min\{
2!\cdot 72^2\cdot 2^4,
4!\cdot 24^4\cdot 2^{10},
8!\cdot 6^8\cdot 2^{18},
16!\cdot 2^{16}\cdot 2^{26}
\}= 165\, 888.$$ It remains to consider the case that $A=\langle S\rangle$, that is, $S$ contains an $\mathbb{F}_2$-basis of $A$.

A computation shows that $H=\mathrm{AGL}_5(2)$ acts transitively on the $\mathbb{F}_2$-basis
 of $A$ contained in $A\setminus B$. Therefore, we may assume that $S$ contains the six canonical vectors $e_1,e_2,e_3,e_4,e_5,e_6$. Write $$\mathcal{B}:=\{e_1,e_2,e_3,e_4,e_5,e_6\}\quad\textrm{ and }K:=\{h\in H\mid \mathcal{B}^h=\mathcal{B}\}.$$ Observe that $K\cong \mathrm{Sym}(6)$ is the group of monomial matrices.

Now, we may write $S=\mathcal{B}\cup T$, for some subset $T$ of $A\setminus (B\cup\mathcal{B})$ of cardinality at most $16-6=10$. If $T=\emptyset$, then $|\Aut(\Cay(A,S)):A|\ge |K|=6!=720$, because the group of monomial matrices $K$ fixes setwise $S$ and hence is a group of automorphisms of $\Cay(A,S)$. Suppose that $T\ne\emptyset$. A computation reveals that $K$ has two orbits on the vectors in $A\setminus (B\cup \mathcal{B})$, with representatives $e_1+e_2+e_3$ and $e_1+e_2+e_3+e_4+e_5$. Write
$\mathcal{B}_1:=\mathcal{B}\cup \{e_1+e_2+e_3\}$ and $\mathcal{B}_2:=\mathcal{B}\cup \{e_1+e_2+e_3+e_4+e_5\}$. Replacing $S$ by a suitable $K$-conjugate if necessary, we may assume that $S$ contains either $\mathcal{B}_1$ or $\mathcal{B}_2$. Therefore, we have two cases to consider
\begin{itemize}
\item $S=\mathcal{B}_1\cup T_1$, for some subset $T_1$ of $A\setminus (B\cup\mathcal{B}_1)$ of cardinality at most $16-7=9$,
\item $S=\mathcal{B}_2\cup T_2$, for some subset $T_2$ of $A\setminus (B\cup\mathcal{B}_2)$ of cardinality at most $16-7=9$.
\end{itemize}
Since $|A\setminus(B\cup\mathcal{B}_i)|=25$, the number of possibilities for $S$ is
$$2\left(
{25\choose 0}+
{25\choose 1}+
{25\choose 2}+
{25\choose 3}+
{25\choose 4}+
{25\choose 5}+
{25\choose 6}+
{25\choose 7}+
{25\choose 8}+
{25\choose 9}
\right)=7\, 701\, 512.$$
This number is within computational reach, therefore we have generated all possible subsets $S$ as above and we have checked that the minimum for $|\Aut(\Cay(A,S)):A|$ is $4$.
\end{proof}

An \textit{unlabeled} digraph is simply an equivalence class of digraphs under the relation ``being digraph-isomorphic to". In the proof of Theorem~\ref{thrm:unlabelled1}, we identity a representative with its class.
\begin{proof}[Proof of Theorem~$\ref{thrm:unlabelled1}$]
For the proof, we let $\mathrm{DRR}(A,B)$ denote the set of unlabelled bipartite DRRs over $A$ with bipartition $\{B,A\setminus B\}$ and let $2^{A\setminus B}_{\mathrm{DRR}}$ be the collection of the subsets $S$ of $A\setminus B$ with $\Cay(A,S)$ a DRR.

Let $S_1$ and $S_2$ be in $2^{A\setminus B}_{\mathrm{DRR}}$ and let $\Gamma_1 := \Cay(A, S_1)$ and $\Gamma_2 := \Cay(A, S_2 )$. Suppose that $\Gamma_1\cong\Gamma_2$ and let $\varphi$ be a digraph isomorphism from $\Gamma_1$ to $\Gamma_2$. Without loss of generality, we may assume that $1^\varphi=1$. Note that $\varphi$ induces a group automorphism from $\Aut(\Gamma_1)=A$ to $\Aut(\Gamma_2)=A$. In particular, $\varphi\in \Aut(A)$ and $S_1$ and $S_2$ are conjugate via an element of $\Aut(A)$. This shows that $$|\mathrm{DRR}(A,B)|\geq \frac{|2^{A\setminus B}_{\mathrm{DRR}}|}{|\Aut(A)|}.$$ By Theorem~\ref{thrm:1}, we have $$|2^{A\setminus B}_{\mathrm{DRR}}|\geq 2^{\frac{|A|}{2}}-3\cdot 2^{\frac{3|A|}{|8|}+(\log_2|A|)^2}.$$ Since $|\Aut(A)|\leq  2^{(\log_2|A|)^2}$, it follows that
$$|\mathrm{DRR}(A,B)|\geq 2^{\frac{|A|}{2}-(\log_2|A|)^2}-3\cdot 2^{\frac{3|A|}{|8|}}.$$
In particular, this proves the first part of the theorem.

Let $\mathrm{UCD}(A,B)$ denote the set of unlabeled bipartite Cayley digraphs on $A$ with bipartition $\{B,A\setminus B\}$ that are not DRRs. Clearly, $|\mathrm{DRR}(A,B)|+|\mathrm{UCD}(A,B)|$ is the number of unlabelled bipartite Cayley graphs on $A$ with bipartition $\{B,A\setminus B\}$. Note that
$$\frac{|\mathrm{DRR}(A,B)|}{|\mathrm{DRR}(A,B)|+|\mathrm{UCD}(A,B)|}=1-\frac{|\mathrm{UCD}(A,B)|}{|\mathrm{DRR}(A,B)|+|\mathrm{UCD}(A,B)|}\geq 1-\frac{|\mathrm{UCD}(A,B)|}{|\mathrm{DRR}(A,B)|}.$$
By Theorem~\ref{thrm:1}, we have $|\mathrm{UCD}(A,B)|\leq 2^{\frac{3|A|}{8}+o(|A|)}$ and thus
$$\frac{|\mathrm{UCD}(A,B)|}{|\mathrm{DRR}(A,B)|}\to 0,$$
as $|A|\to\infty$. This completes the proof of the second part of the theorem.
\end{proof}

\section{Existence and asymptotic enumeration of bipartite Cayley graphs}\label{sec: 3}
Given a group $A$, we write $A_2$ for the subset  $\{a\in A\mid o(a)\le 2\}$. When $A$ is abelian, $A_2$ is a subgroup of $A$.

\begin{lemma}\label{l:4}
Let $A$ be an abelian group and let $B$ be a subgroup of $A$ having index $2$. Then
\[
|\{S\subseteq A\setminus B\mid S=S^{-1}\}|=
2^{\frac{|A|}{4}+\frac{|A_2\setminus B|}{2}}=\begin{cases}
2^{\frac{|A|}{4}}&\textit{if } A_2\le B,\\
2^{\frac{|A|}{4}+\frac{|A_2|}{4}}&\textrm{if }A_2\nleq B.
\end{cases}
\]
\end{lemma}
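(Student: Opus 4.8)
The plan is to count the orbits of $\langle\iota\rangle$ on $A\setminus B$, where $\iota$ is the inversion map $a\mapsto a^{-1}$, and then use the fact that a subset $S\subseteq A\setminus B$ satisfies $S=S^{-1}$ precisely when $S$ is a union of $\langle\iota\rangle$-orbits. First I would note that $A\setminus B$ is $\iota$-invariant, since $B$ is a subgroup and hence $B^{-1}=B$. The orbits of $\langle\iota\rangle$ on $A\setminus B$ have size $1$ or $2$: the singleton orbits are exactly the $\{a\}$ with $a=a^{-1}$, i.e.\ $a\in A_2$, and these correspond bijectively to the elements of $A_2\setminus B$; every remaining orbit is a pair $\{a,a^{-1}\}$ with $a\notin A_2$, and there are
$$\frac{|A\setminus B|-|A_2\setminus B|}{2}=\frac{|A|/2-|A_2\setminus B|}{2}$$
of them. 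Hence the total number $\ell$ of $\langle\iota\rangle$-orbits on $A\setminus B$ is
$$\ell=|A_2\setminus B|+\frac{|A|/2-|A_2\setminus B|}{2}=\frac{|A|}{4}+\frac{|A_2\setminus B|}{2},$$
and so there are exactly $2^{\ell}=2^{\frac{|A|}{4}+\frac{|A_2\setminus B|}{2}}$ inverse-closed subsets of $A\setminus B$, which is the first claimed equality.

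For the displayed case distinction, I would argue as follows. If $A_2\le B$, then $A_2\setminus B=\emptyset$, so $\ell=|A|/4$. If $A_2\nleq B$, then $A_2B=A$ because $B$, having index $2$, is a maximal subgroup; therefore $|A_2:A_2\cap B|=|A_2B:B|=|A:B|=2$, whence $|A_2\setminus B|=|A_2|-|A_2\cap B|=|A_2|/2$ and $\ell=|A|/4+|A_2|/4$. Substituting these into $2^{\ell}$ gives the two cases.

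The argument is entirely elementary, so I do not expect any genuine obstacle; the only point that needs a moment's care is the identity $|A_2\setminus B|=|A_2|/2$ in the case $A_2\nleq B$, which rests on the fact that an index-$2$ subgroup is maximal (equivalently normal), so that $A_2\cap B$ has index $2$ in $A_2$. (One should also observe in passing that although $|A|/4$ and $|A_2\setminus B|/2$ may each be half-integers, their sum $\ell$ is always a non-negative integer, since the non-involutions in $A\setminus B$ are paired up by $\iota$.)
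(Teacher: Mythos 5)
Your proof is correct and follows essentially the same argument as the paper: counting the $\langle\iota\rangle$-orbits on $A\setminus B$ (singletons from $A_2\setminus B$, pairs $\{a,a^{-1}\}$ otherwise) is just a reformulation of the paper's partition of $S$ into $S\cap A_2$ and $S\setminus A_2$ with the same count $2^{\frac{|A|}{4}+\frac{|A_2\setminus B|}{2}}$. The only difference is cosmetic: you spell out the case distinction via $|A_2:A_2\cap B|=2$ when $A_2\nleq B$, which the paper leaves implicit in its closing ``Now, the proof follows.''
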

\begin{proof}
We may partition the inverse-closed subsets $S$ of $A\setminus B$ in two parts $S_1:=S\cap A_2$ and $S_2:=S\setminus A_2$. Clearly, $S_1$ is an arbitrary subset of $A_2\setminus B$, whereas since none of the elements in $A\setminus A_2$ is an involution, the elements in $S_2$ come in pairs: each element paired up to its inverse. Therefore, for $S_1$ we have $2^{|A_2\setminus B|}$ choices and  for $S_2$ we have
$2^{\frac{|A\setminus (A_2\cup B)|}{2}}=2^{\frac{|(A\setminus B)\setminus (A_2\setminus B)|}{2}}=2^{\frac{|A\setminus B|}{2}-\frac{|A_2\setminus B|}{2}}$ choices. Now, the proof follows.
\end{proof}

\begin{lemma}\label{l:3new}
Let $A$ be an abelian group and let $B$ be a subgroup of $A$ having index $2$. The number of inverse-closed subsets $S$ of $A\setminus B$ with $\langle S\rangle $ a proper subgroup of $A$ is at most $2^{\frac{|A|}{8}+\frac{|A_2\setminus B|}{2}+\log_2|A|}$.
\end{lemma}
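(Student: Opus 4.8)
The plan is to mirror the proof of Lemma~\ref{l:3}, replacing the crude count $2^{|C\setminus B|}$ of \emph{all} subsets of $C\setminus (C\cap B)$ by the sharper count of \emph{inverse-closed} subsets supplied by Lemma~\ref{l:4}.

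First I would note that an inverse-closed subset $S\subseteq A\setminus B$ with $\langle S\rangle<A$ has $\langle S\rangle$ contained in some maximal subgroup $C$ of $A$, and that every maximal subgroup of the abelian group $A$ has prime index. Since $\langle S\rangle\le C$ if and only if $S\subseteq C$, and $S\subseteq A\setminus B$ together with $S\subseteq C$ is equivalent to $S\subseteq C\setminus(C\cap B)$, we get
$$\{S\subseteq A\setminus B\mid S=S^{-1},\ \langle S\rangle<A\}=\bigcup_{\substack{C<A\\ |A:C|\textrm{ prime}}}\{S\subseteq C\setminus(C\cap B)\mid S=S^{-1}\}.$$

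Next, fix such a $C$. If $C\le B$, then $C\setminus(C\cap B)=\emptyset$ and the corresponding collection consists of the single (inverse-closed) set $\emptyset$. If $C\nleq B$, then $C\cap B$ has index $2$ in $C$, so Lemma~\ref{l:4} applied to the pair $(C,C\cap B)$ shows that the number of inverse-closed subsets of $C\setminus(C\cap B)$ equals $2^{\frac{|C|}{4}+\frac{|C_2\setminus(C\cap B)|}{2}}$, where $C_2=\{c\in C\mid c^2=1\}$. Now $C_2=C\cap A_2$, hence $C_2\setminus(C\cap B)=C_2\setminus B\subseteq A_2\setminus B$; moreover $|C|\le|A|/2$ because $C$ is proper. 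Therefore the number of inverse-closed subsets of $C\setminus(C\cap B)$ is at most $2^{\frac{|A|}{8}+\frac{|A_2\setminus B|}{2}}$, and this bound trivially also covers the case $C\le B$.

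Finally I would sum over the at most $|A|$ subgroups $C<A$ of prime index (fact~(4) of Section~\ref{s: 2}) to obtain
$$|\{S\subseteq A\setminus B\mid S=S^{-1},\ \langle S\rangle<A\}|\le |A|\cdot 2^{\frac{|A|}{8}+\frac{|A_2\setminus B|}{2}}\le 2^{\frac{|A|}{8}+\frac{|A_2\setminus B|}{2}+\log_2|A|}.$$
I do not expect a genuine obstacle here: the argument is essentially the transcription of Lemma~\ref{l:3} with Lemma~\ref{l:4} inserted in place of the trivial count. The only points requiring a little care are the identification $C_2=C\cap A_2$ (which gives $C_2\setminus B\subseteq A_2\setminus B$) and the separate, but immediate, treatment of those $C$ contained in $B$.
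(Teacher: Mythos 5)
Your proposal is correct and follows essentially the same route as the paper: decompose over the prime-index subgroups $C<A$, count the inverse-closed subsets of $C\setminus(C\cap B)$ via Lemma~\ref{l:4}, bound each term by $2^{\frac{|A|}{8}+\frac{|A_2\setminus B|}{2}}$ using $|C|\le|A|/2$ and $C_2\setminus B\subseteq A_2\setminus B$, and sum over at most $|A|$ such subgroups. Your explicit treatment of the case $C\le B$ is in fact slightly more careful than the paper's, which states the count as an equality even when $C\cap B$ is not of index $2$ in $C$, but this does not affect the bound.
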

\begin{proof}
Set $N:=|\{
S\subseteq A\setminus  B\mid
S=S^{-1},\langle S\rangle < A\}|$.
Clearly,
\begin{align*}
\{S\subseteq A\setminus B\mid S=S^{-1},\langle S\rangle<A\}&=\bigcup_{\substack{C<A\\|A:C|\textrm{ prime}}}\{S\subseteq A\setminus B\mid S=S^{-1},\langle S\rangle\le C \}.
\end{align*}
Since $\{S\subseteq A\setminus B\mid S=S^{-1}, \langle S\rangle \le C\}\subseteq \{S\mid S=S^{-1},S\subseteq C\setminus (C\cap  B)\}$, using Lemma \ref{l:4} we have
\begin{align*}
N&
\le
\sum_{\substack{C<A\\|A:C|\textrm{ prime}}}
|\{S\mid S=S^{-1},S\subseteq C\setminus (C\cap B)\}|=
\sum_{\substack{C<A\\|A:C|\textrm{ prime}}}
2^{\frac{|C|}{4}+\frac{|C_2\setminus B|}{2}}\\
&\le \sum_{\substack{C<A\\|A:C|\textrm{ prime}}}
2^{\frac{|A|}{8}+\frac{|A_2\setminus B|}{2}}\le
2^{\frac{|A|}{8}+\frac{|A_2\setminus B|}{2}+\log_2|A|}.\qedhere
\end{align*}
\end{proof}

\begin{example}\label{example:1}
{\rm
Let $\ell$ be a positive integer with $\ell\ge 1$, let $A:=\langle x\rangle\times \langle y_1\rangle\times\langle y_2\rangle\times \cdots \times\langle y_\ell\rangle$ with $o(x)=4$ and $o(y_i)=2$, for each $i\in \{1,\ldots,\ell\}$, and  let $B:=\langle x^2,y_1,y_2,\ldots,y_\ell\rangle$. Here we show that no bipartite Cayley graph over $A$ with bipartition $\{B,A\setminus B\}$ has Cayley index $2$, that is, $c(A,B)>2$.

Let $\alpha:A\to A$ be the automorphism of $A$ defined on the generators by
\begin{align*}
x^\alpha&=x^{-1},\\
y_1^\alpha&=x^2y_1, \textrm{ and }\\
y_i^\alpha&=y_i, \textrm{ for each }i\in \{2,\ldots,\ell\}.
\end{align*}
Observe that $\alpha$ is a non-identity automorphism and $\alpha\ne \iota$. Moreover, $$(xy_1)^\alpha=x^\alpha y_1^\alpha=x^{-1}x^2y_1=xy_1.$$  Therefore $\alpha$ fixes each element in the subgroup $$T_1:=\langle xy_1,y_2,\ldots,y_\ell\rangle$$ and  $\alpha$ inverts each element in the subgroup $$T_{-1}:=\langle x,y_2,\ldots,y_\ell\rangle.$$ Clearly, $$T_1\cup T_{-1}\supseteq A\setminus B$$ and hence $\{a,a^{-1}\}^\alpha=\{a,a^{-1}\}$,  for every $a\in A\setminus B$. In particular, for every inverse-closed subset $S\subseteq A\setminus B$, $\alpha$ is a non-identity graph automorphism of $\Cay(A,S)$. Thus $\Aut(\Cay(A,S))\ge\langle A,\iota,\alpha\rangle$  and $c(A,B)\ge 4$.}
\end{example}

\begin{example}\label{example:2}
{\rm Let $\ell$ be a non-negative integer, let $A:=\langle x_1\rangle\times\langle x_2\rangle\times \langle y_1\rangle\times\langle y_2\rangle\times \cdots \times\langle y_\ell\rangle$ with $o(x_1)=o(x_2)=4$ and $o(y_i)=2$, for each $i\in \{1,\ldots,\ell\}$, and  let $B:=\langle x_1^2,x_2,y_1,\ldots,y_\ell\rangle$. Here we show  that no bipartite Cayley graph over $A$ with bipartition $\{B,A\setminus B\}$ has Cayley index $2$, that is, $c(A,B)>2$.

Let $\alpha:A\to A$ be the automorphism of $A$ defined on the generators by
\begin{align*}
x_1^\alpha&=x_1,\\
x_2^\alpha&=x_1^2x_2^{-1}, \textrm{ and}\\
y_i^\alpha&=y_i, \textrm{ for each }i\in \{1,\ldots,\ell\}.
\end{align*}
Observe that $\alpha$ is a non-identity automorphism and $\alpha\ne \iota$. Moreover,
$$(x_1x_2)^\alpha=x_1^\alpha x_2^\alpha=x_1 x_1^{2}x_2^{-1}=x_1^{-1}x_2^{-1}=(x_1x_2)^{-1}.$$
Therefore $\alpha$ fixes each element in the subgroup $$T_1:=\langle x_1,x_2^2,y_1,y_2,\ldots,y_\ell\rangle$$ and $\alpha$ inverts each element in the subgroup $$T_{-1}:=\langle x_1^2,x_1x_2,y_1,\ldots,y_\ell\rangle.$$ Clearly, $$T_1\cup T_{-1}\supseteq A\setminus B$$ and hence $\{a,a^{-1}\}^\alpha=\{a,a^{-1}\}$,  for every $a\in A\setminus B$. In particular, for every inverse-closed subset $S\subseteq A\setminus B$, $\alpha$ is a non-identity graph automorphism of $\Cay(A,S)$. Thus $\Aut(\Cay(A,S))\ge \langle A,\iota,\alpha\rangle$ and $c(A,B)\ge 4$.
}
\end{example}

\begin{proof}[Proof of Theorem~$\ref{thrm:aa}$]
Let $\ell$ be a positive integer with $\ell\ge 1$, let $A:=\langle x\rangle\times \langle y_1\rangle\times\langle y_2\rangle\times \cdots \times\langle y_\ell\rangle$ with $o(x)=4$ and $o(y_i)=2$, for each $i\in \{1,\ldots,\ell\}$. Thus $A\cong C_4\times C_2^\ell$. Let $B$ be a subgroup of $A$ with $B\cong C_2^{\ell+1}$.
It is an easy computation to see that $\Aut(A)$ has only two orbits in its action on the subgroups of $A$ having index $2$; moreover, for the two orbits we may take representatives $\langle x^2,y_1,y_2,\ldots,y_\ell\rangle\cong C_2^{\ell+1}$ and $\langle x,y_1,y_2,\ldots,y_{\ell-1}\rangle\cong C_4\times C_2^{\ell-1}$. Therefore, $B=\langle x^2,y_1,y_2,\ldots,y_\ell\rangle$ and, in this case, the proof follows from Example~\ref{example:1}.

Let $\ell$ be a non-negative integer, let $A:=\langle x_1\rangle\times\langle x_2\rangle\times \langle y_1\rangle\times\langle y_2\rangle\times \cdots \times\langle y_\ell\rangle$ with $o(x_1)=o(x_2)=4$ and $o(y_i)=2$, for each $i\in \{1,\ldots,\ell\}$. Thus $A\cong C_4^2\times C_2^\ell$. Let $B$ be a subgroup of $A$ with $B\cong C_4\times C_2^{\ell+1}$.
When $\ell\ge 1$, the group $\Aut(A)$ has only two orbits in its action on the subgroups of $A$ having index $2$; moreover, for the two orbits we may take representatives $\langle x_1^2,x_2,y_1,y_2,\ldots,y_\ell\rangle\cong C_4\times C_2^{\ell+1}$ and $\langle x_1,x_2,y_1,y_2,\ldots,y_{\ell-1}\rangle\cong C_4^2\times C_2^{\ell-1}$. Therefore, replacing $B$ by a suitable $\Aut(A)$-conjugate, we may assume that $B=\langle x_1^2,x_2,y_1,y_2,\ldots,y_\ell\rangle$. When $\ell=0$, every subgroup of $A$ having index $2$ is isomorphic to $C_4\times C_2$ and hence, again, we may take $B=\langle x_1^2,x_2\rangle$. Now, the proof in this case follows from  Example~\ref{example:2}.
\end{proof}

\begin{lemma}\label{l:21}
Let $A$ be an abelian group not having cardinality a power of $2$, let $B$ be a subgroup of $A$ having index $2$, and let $H$ and $K$ be subgroups of $A$ with $1<H\le K<A$ and $H\le B$. The number of inverse-closed subsets $S$ of $A\setminus B$ such that $S\setminus K$ is a union of $H$-cosets is at most $2^{\frac{11|A|}{48}+\frac{|A_2\setminus B|}{2}}$.
\end{lemma}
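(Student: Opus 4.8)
The plan is to decompose an inverse‑closed subset $S\subseteq A\setminus B$ for which $S\setminus K$ is a union of $H$‑cosets into its two independent pieces $S\cap K$ and $S\setminus K$, and to bound the number of possibilities for each. Since $K$ is a subgroup, $S$ is inverse‑closed if and only if both $S\cap K$ and $S\setminus K$ are inverse‑closed. Now $S\cap K$ is an arbitrary inverse‑closed subset of $K\setminus B$, so by Lemma~\ref{l:4} (applied to the abelian group $K$ and, when $K\nleq B$, to its index $2$ subgroup $K\cap B$; the count being $1$ when $K\le B$) there are $2^{\frac{|K\setminus B|}{2}+\frac{|K_2\setminus B|}{2}}$ choices for it, and since $K_2=A_2\cap K$ we may replace $|K_2\setminus B|$ by the larger $|A_2\setminus B|$.

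For $S\setminus K$ I would pass to a quotient. As $H\le B\cap K$, the $H$‑cosets partition $(A\setminus B)\setminus K$; writing $\pi\colon A\to \bar A:=A/H$, $\bar B:=\pi(B)$ (of index $2$ in $\bar A$) and $\bar K:=\pi(K)$, these cosets correspond bijectively to the elements of $X:=\bar A\setminus(\bar B\cup\bar K)$, and $S\setminus K$ is inverse‑closed in $A$ exactly when the corresponding subset of $X$ is inverse‑closed in $\bar A$. Since $X$ is invariant under inversion, with fixed‑point set $X\cap\bar A_2$ (where $\bar A_2:=(A/H)_2$), the number of choices for $S\setminus K$ is $2^{\frac{|X|+|X\cap\bar A_2|}{2}}$. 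Here $|X|=\frac{|A|-|K|}{2|H|}$ if $K\nleq B$ and $|X|=\frac{|A|}{2|H|}$ if $K\le B$; and, setting $A':=\{g\in A\mid g^2\in H\}$ (a subgroup of $A$ with $H\le A'$ and $\pi(A')=\bar A_2$, so $|A'|=|H|\,|\bar A_2|$), one gets $|X\cap\bar A_2|\le |A'\setminus B|/|H|\le |A'|/(2|H|)$.

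The heart of the argument is a dichotomy on $\bar A$. If $\bar A$ has non‑trivial odd part, then its elements of order at most $2$ lie in its Sylow $2$‑subgroup, which has index at least $3$, so $|\bar A_2|\le |A/H|/3$ and hence $|A'|\le |A|/3$; feeding $|X\cap\bar A_2|\le |A|/(6|H|)$, $|H|\ge 2$ and $|K|\le |A|/2$ (valid since $K<A$ is abelian) into the counts above, a direct computation bounds $\log_2$ of the total by $\frac{3|K|+4|A|}{24}+\frac{|A_2\setminus B|}{2}\le \frac{11|A|}{48}+\frac{|A_2\setminus B|}{2}$. If instead $\bar A=A/H$ is a $2$‑group, then $|H|$ is divisible by the odd part of $|A|$; this is the only place the hypothesis ``$|A|$ not a power of $2$'' is used, forcing $|H|\ge 3$. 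Here I would use only the trivial bound $|X\cap\bar A_2|\le |X|$, that is, at most $2^{|X|}$ choices for $S\setminus K$; combined with $|H|\ge 3$ and $|K|\le|A|/2$, this bounds $\log_2$ of the total by $\frac{|K|+2|A|}{12}+\frac{|A_2\setminus B|}{2}\le \frac{5|A|}{24}+\frac{|A_2\setminus B|}{2}$, again below the target. The two sub‑cases $K\le B$ are even easier: there $S\cap K=\emptyset$, and $\log_2$ of the count is at most $\frac{|A|}{3|H|}$ or $\frac{|A|}{2|H|}$, both at most $\frac{|A|}{6}<\frac{11|A|}{48}$.

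I expect the main obstacle to be the calibration of the estimates: the exponent $\frac{11|A|}{48}$ is essentially sharp (attained when $|H|=2$, $|A:K|=2$, and $\bar A$ has non‑trivial odd part), so no step can be wasteful. In particular one must retain the factor $\tfrac12$ gained by intersecting with the complement of $B$ when bounding $|X\cap\bar A_2|$, must bound $|K_2\setminus B|$ by $|A_2\setminus B|$ rather than by anything coarser, and in the $2$‑group case must genuinely exploit $|H|\ge 3$ instead of merely $|H|\ge 2$. The remaining verifications — that the $H$‑coset/quotient correspondence preserves inverse‑closedness, that $A'$ is a subgroup with $\pi(A')=\bar A_2$, and that $\bar B$ has index $2$ in $\bar A$ — are routine and rely throughout on $H\le B\cap K$.
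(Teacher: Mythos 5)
Your argument is correct, and the numbers check out: the split of $S$ into $S\cap K$ and $S\setminus K$, the application of Lemma~\ref{l:4} to $K$ with the index-two subgroup $K\cap B$ (absorbing $|K_2\setminus B|$ into $|A_2\setminus B|$), the exact count $2^{\frac{|X|+|X\cap\bar A_2|}{2}}$ for the coset part, and the final calibrations $\frac{3|K|+4|A|}{24}$, $\frac{|K|+2|A|}{12}$ and $\frac{|A|}{6}$ (plus $\frac{|A_2\setminus B|}{2}$) all land at or below $\frac{11|A|}{48}+\frac{|A_2\setminus B|}{2}$ after using $|K|\le|A|/2$. Where you genuinely differ from the paper is in the treatment of $S\setminus K$. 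The paper splits on $|H|>2$ versus $|H|=2$ and, in the delicate case $|H|=2$ with $H=\langle h\rangle$, works inside $A$ itself: it observes that under inversion and translation by $h$ the elements of $S\setminus K$ come in fours except on the involutions outside $K$ and on the set $T=\{a\in A\setminus B\mid a^2=h\}$, where they come in pairs, and then bounds $|T|\le|A_2\cap B|\le|A|/6$ using that $A$ is not a $2$-group. You instead quotient by $H$, count inverse-closed subsets of $X=\bar A\setminus(\bar B\cup\bar K)$ in $\bar A=A/H$, and split on whether $\bar A$ is a $2$-group; your subgroup $A'=\{g\in A\mid g^2\in H\}$, with $\pi(A')=\bar A_2$, plays exactly the role of the paper's ``$T$ together with the involutions'', and the hypothesis that $|A|$ is not a power of $2$ enters either through $|\bar A_2|\le|\bar A|/3$ or through forcing $|H|\ge 3$. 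Your version handles all $|H|\ge 2$ uniformly and is arguably cleaner, while the paper's version produces exact counts (not merely upper bounds) in the $|H|=2$ subcases at the price of a longer case analysis; both are tight at the same worst case $|H|=2$, $|K|=|A|/2$, $K\nleq B$.
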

\begin{proof}
We subdivide the proof in various cases. For simplicity, we write
$$\mathcal{S}:=\{S\subseteq A\setminus B\mid S=S^{-1}, S\setminus K\textrm{ is a union of }H\textrm{-cosets}\}\quad\textrm{and}\quad s:=|\mathcal{S}|.$$

\smallskip

\noindent\textsc{Case 1: }$K\le B$ and $|H|> 2$.

\smallskip

\noindent If $S\in\mathcal{S}$, then $S\setminus K=S$ and hence the whole of $S$ is a union of $H$-cosets. Since $A\setminus B$ has $2^{\frac{|A\setminus B|}{|H|}}$ subsets that are union of $H$-cosets (regardless of whether they are is inverse-closed or not) and since $2^{\frac{|A\setminus B|}{|H|}}=2^{\frac{|A|}{2|H|}}\le 2^{\frac{|A|}{6}}$, we have
\begin{equation}\label{eq:21_1}
s\le 2^{\frac{|A|}{6}}.
\end{equation}

\smallskip

\noindent\textsc{Case 2: }$K\nleq B$ and $|H|> 2$.

\smallskip

\noindent Let $S\in\mathcal{S}$. We may partition $S$ into two parts $S_1:=S\cap K$ and $S_2:=S\setminus K$. Observe that $K\cap  B$ has index $2$ in $K$  because $K\nleq  B$. Therefore, by Lemma \ref{l:4} applied to the group $K$ and to the index $2$ subgroup $B\cap K$, the number of choices for $S_1$ is exactly
$$2^{\frac{|K|}{4}+\frac{|K_2\setminus B|}{2}}.$$

Similarly to Case~1, to obtain an upper bound for the number of choices of  $S_2$, we simply count the number of subsets $S_2$ of  $(A\setminus B)\setminus (K\setminus B)$ that are union of $H$-cosets, regardless of whether $S_2$ is inverse-closed or not. Thus the number of choices for $S_2$  is at most
$$2^{\frac{|(A\setminus B)\setminus (K\setminus B)|}{|H|}}=2^{\frac{|A|}{2|H|}-\frac{|K|}{2|H|}}.$$
Combining the upper bounds for $S_1$ and $S_2$, we have
$$s\le
2^{
\frac{|A|}{2|H|}
-\frac{|K|}{2|H|}+
\frac{|K|}{4}+\frac{|K_2\setminus B|}{2}}=
2^{\frac{|A|}{2|H|}+\frac{|K|}{4}\left(1-\frac{2}{|H|}\right)+\frac{|K_2\setminus B|}{2}}
\le
2^{\frac{|A|}{2|H|}+\frac{|K|}{4}\left(1-\frac{2}{|H|}\right)+\frac{|A_2\setminus B|}{2}}.$$
Using $|H|>2$, we deduce $1/2|H|\le 1/6$ and $1-2/|H|\le 1/3$. Thus
$$s\le 2^{\frac{|A|}{6}+\frac{|K|}{12}+\frac{|A_2\setminus B|}{2}}.$$
Since $|K|\le |A|/2$, we have
\begin{equation}\label{eq:21_3}
s\le 2^{\frac{5|A|}{24}+\frac{|A_2\setminus B|}{2}}.
\end{equation}

\smallskip

For the rest of the proof we may assume that $|H|=2$.   Write $H:=\langle h\rangle$. We start with a preliminary observation. Let $a\in A$. If $\{a,a^{-1}\}h=\{a,a^{-1}\}$, then $ah=a^{-1}$ and $a^{-1}h=a$, that is, $a^2=h$. Therefore, under the action of right multiplication by $h$, the only pairs $\{a,a^{-1}\}$ that are fixed by $h$ satisfy $a^2=h$.

\smallskip

\noindent\textsc{Case 3: }$K\le B$ and $|H|= 2$.

\smallskip

\noindent As in Case 1, all of $S$ is a union of $H$-cosets. Write $$T:=\{a\in A\setminus B\mid a^2=h\}.$$ Observe that $T$ contains only elements having order $4$ and hence $T\cap A_2=\emptyset$. Let $S\in\mathcal{S}$. The elements in $S\cap T$ come in pairs: each element $x$ paired up to $x^{-1}$. The elements in $S\cap (A_2\setminus B)$ also come in pairs: each element $x$ paired up with $xh$. The elements in $S\setminus (T\cup (A_2\setminus B))$ come in fours: each element $x$ comes along with $x,x^{-1},xh$ and $x^{-1}h$. Thus, from our preliminary observation, we have
\begin{align}\label{eq:my}
s&=2^{\frac{|A_2\setminus B|}{2}+\frac{|T|}{2}+\frac{|(A\setminus B)\setminus (T\cup (A_2\setminus B))|}{4}}=2^{\frac{|A_2\setminus B|}{2}+\frac{|T|}{2}+\frac{|A\setminus B|}{4}-\frac{|A_2\setminus B|}{4}-\frac{|T|}{4}+\frac{|T\cap A_2|}{4}}\\\nonumber
&=
2^{\frac{|A\setminus B|}{4}+\left(\frac{|T|}{2}-\frac{|T|}{4}\right)+\left(\frac{|A_2\setminus B|}{2}-\frac{|A_2\setminus B|}{4}\right)}=
2^{\frac{|A|}{8}+\frac{|T|}{4}+\frac{|A_2\setminus B|}{4}}.
\end{align}

If  $T=\emptyset$, then~\eqref{eq:my} gives $$s= 2^{\frac{|A|}{8}+\frac{|A_2\setminus B|}{4}}.$$
Suppose $T\ne\emptyset$ and let $a_0\in T$. An easy computation yields $$T=a_0(A_2\cap B)=\{a_0c\mid c\in A_2\cap B\}.$$ Thus, $|T|=|A_2\cap B|$. Since $A$ is not a $2$-group and $|A:B|=2$, we have $|A:A_2\cap B|=|A:B||B:A_2\cap B|=2| B:A_2\cap  B|\ge 6$ and hence~\eqref{eq:my} gives
$$s=2^{\frac{|A|}{8}+\frac{|A_2\cap B|}{4}+\frac{|A_2\setminus B|}{4}}\le 2^{\frac{|A|}{8}+\frac{|A|}{24}+\frac{|A_2\setminus B|}{4}}\le
2^{\frac{|A|}{6}+\frac{|A_2\setminus B|}{4}}.$$

Summing up, we have shown
\begin{equation}\label{eq:21_2}
s\le 2^{\frac{|A|}{6}+\frac{|A_2\setminus B|}{4}}.
\end{equation}

\smallskip

\noindent\textsc{Case 4: }$K\nleq B$ and $|H|= 2$.

\smallskip

\noindent We use the ideas in Cases~2 and~3. Write $$T:=\{a\in (A\setminus B)\setminus (K\setminus B)\mid a^2=h\}.$$ Observe that $T$ contains only elements having order $4$ and hence $T\cap A_2=\emptyset$. Write also $$R:=(A_2\setminus B)\setminus (K\setminus B).$$ The sets $T$, $R$ and $K\setminus B$ are mutually disjoint and $R\cup (K_2\setminus B)=A_2\setminus B$.

Let $S\in \mathcal{S}$. By Lemma \ref{l:4} applied to the group $K$ and to the index $2$ subgroup $B\cap K$, the number of choices for $S\cap K$ is exactly
$2^{\frac{|K|}{4}+\frac{|K_2\setminus B|}{2}}.$ The elements in $S\setminus  K$ can be partitioned in three subsets $$S_1:=(S\setminus K)\cap T,\,\,\, S_2:=(S\setminus K)\cap R\, \textrm{   and   }\,S_3:=(S\setminus K)\setminus (S_1\cup S_2).$$ As $R$, $T$ and $S\setminus K$ are inverse-closed and unions of $H$-cosets, so are $S_1$, $S_2$ and $S_3$. Therefore the elements in $S_1$ and in $S_2$ come in pairs (the element $x$ in $S_1$ paired up to $x^{-1}$ and the element $x$ in $S_2$ paired up to $xh$), whereas the elements in $S_3$ come in fours. Thus
\begin{align}\label{eq:MY}
s&=2^{\frac{|(A\setminus B)\setminus (R\cup T\cup(K\setminus  B))|}{4}+\frac{|T|}{2}+\frac{|R|}{2}+\frac{|K|}{4}+\frac{|K_2\setminus B|}{2}}=
2^{\frac{|A\setminus B|}{4}-\frac{|R|}{4}-\frac{|T|}{4}-\frac{|K\setminus B|}{4}+\frac{|T|}{2}+\frac{|R|}{2}+\frac{|K|}{4}+\frac{|K_2\setminus B|}{2}}
\\\nonumber
&=2^{\frac{|A|}{8}+\left(\frac{|R|}{2}-\frac{|R|}{4}\right)+\left(\frac{|T|}{2}-\frac{|T|}{4}\right)-\frac{|K|}{8}+\frac{|K|}{4}+\frac{|K_2\setminus B|}{2}}\\\nonumber
&=
2^{
\frac{|A|}{8}+\frac{|R|}{4}+\frac{|T|}{4}+\frac{|K|}{8}+\frac{|K_2\setminus B|}{2}}\le
2^{
\frac{|A|}{8}+\frac{|T|}{4}+\frac{|K|}{8}+\frac{|A_2\setminus B|}{2}}.
\end{align}

If  $T=\emptyset$, then $$s\le
2^{\frac{|A|}{8}+\frac{|K|}{8}+\frac{|A_2\setminus B|}{2}}\le 2^{\frac{|A|}{8}+\frac{|A|}{16}+\frac{|A_2\setminus B|}{2}}=2^{\frac{3|A|}{16}+\frac{|A_2\setminus B|}{2}}.$$
Suppose $T\ne\emptyset$ and let $a_0\in T$. An easy computation gives $$T\subseteq  a_0(A_2\cap B)=\{a_0c\mid c\in A_2\cap B\}.$$ Thus, $|T|\le |A_2\cap B|$. Since $A$ is not a $2$-group and $|A:B|=2$, we have $|A_2\cap B|\le |A|/6$ and hence $|T|\le |A|/6$. From~\eqref{eq:MY}, we deduce
\begin{equation}\label{eq:last}
s\le 2^{\frac{|A|}{8}+\frac{|A|}{24}+\frac{|A|}{16}+\frac{|A_2\setminus B|}{2}}=2^{\frac{11|A|}{48}+\frac{|A_2\setminus B|}{2}}.
\end{equation}

Now the proof follows from~\eqref{eq:21_1},~\eqref{eq:21_3},~\eqref{eq:21_2} and~\eqref{eq:last}.
\end{proof}

\begin{lemma}\label{l:22}
Let $A$ be an abelian group of exponent greater than $2$, let $B$ be a subgroup of $A$ having index $2$ and let $\alpha$ be a non-identity automorphism of $A$ with $B^\alpha=B$ and $\alpha\ne \iota$.  Suppose that $(A,B)$ is not one of the pairs in the statement of Theorem~$\ref{thrm:aa}$. Then, the number of inverse-closed subsets $S$ of $A\setminus B$ with $S^\alpha=S$ is at most $2^{\frac{11|A|}{48}+\frac{|A_2\setminus B|}{2}}$.
\end{lemma}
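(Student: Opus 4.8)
The plan is to mimic the argument for Lemma~\ref{l:1}, exploiting the inverse-closedness of $S$ by replacing the group $\langle\alpha\rangle$ with $G:=\langle\alpha,\iota\rangle\le\Aut(A)$. Since $B^\alpha=B$ and $B^\iota=B$, the set $A\setminus B$ is $G$-invariant, and a subset $S\subseteq A\setminus B$ is inverse-closed with $S^\alpha=S$ if and only if $S$ is a union of $G$-orbits on $A\setminus B$. Hence the quantity to be bounded is $2^{\ell}$, where $\ell$ is the number of $G$-orbits on $A\setminus B$.

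To bound $\ell$ I would count fixed points. The $\langle\iota\rangle$-orbits on $A\setminus B$ are the singletons $\{a\}$ with $a\in A_2\setminus B$ together with the pairs $\{a,a^{-1}\}$ with $a\in(A\setminus B)\setminus A_2$, so there are $q:=\frac{|A_2\setminus B|}{2}+\frac{|A|}{4}$ of them. As $\langle\iota\rangle\trianglelefteq G$ and $G/\langle\iota\rangle$ is cyclic, generated by the image of $\alpha$, the $G$-orbits on $A\setminus B$ correspond to the orbits of $\langle\alpha\rangle$ acting on the set of $\langle\iota\rangle$-orbits; therefore $\ell\le f+\frac{q-f}{2}=\frac{q+f}{2}$, where $f$ is the number of $\langle\iota\rangle$-orbits fixed by $\alpha$. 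Writing $C:=\cent{A}{\alpha}$ and $I:=\{a\in A\mid a^\alpha=a^{-1}\}$ (both subgroups of the abelian group $A$), such a fixed orbit is either a singleton $\{a\}$ with $a\in(C\cap A_2)\setminus B$ or a pair $\{a,a^{-1}\}$ with $a\in(C\cup I)\setminus(A_2\cup B)$; using $C\cap A_2=I\cap A_2=C\cap I$, a short computation gives $f=\frac{1}{2}\bigl(|C\setminus B|+|I\setminus B|\bigr)$. Plugging this into $\ell\le\frac{q+f}{2}$, it suffices to prove $|C\setminus B|+|I\setminus B|\le\frac{5|A|}{12}+|A_2\setminus B|$.

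Since $\alpha\neq 1$ we have $C<A$, and since $\alpha\neq\iota$ we have $I<A$; thus $|C\setminus B|,|I\setminus B|\le|A|/4$ by fact~(5) of Section~\ref{s: 2}. If one of $C,I$ has index at least $4$ in $A$, or equals $B$, then $|C\setminus B|+|I\setminus B|\le\frac{3|A|}{8}<\frac{5|A|}{12}$, and we are done. Otherwise $C$ and $I$ both have index $2$ and both differ from $B$; if moreover $A_2\nleq B$, then a quick case check on $C\cap A_2$ gives $|A_2\setminus B|\ge|A|/8$, so the required inequality holds there too. We are thus reduced to the single configuration where $C$ and $I$ both have index $2$ in $A$, are both different from $B$, and $A_2\le B$. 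In this configuration I claim $(A,B)$ must be one of the pairs in Theorem~\ref{thrm:aa}, contradicting the hypothesis; this is the crux of the proof.

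To settle that configuration, first note $C\neq I$: if $C=I$ then $C\le A_2$, so $A_2=C$ has index $2$, and then $A_2\le B$ would force $B=A_2=C$, a contradiction. Hence $A=CI$, the subgroup $D:=C\cap I=C\cap A_2=I\cap A_2$ has order $|A|/4$, and $\alpha^2$ fixes $\langle C\cup I\rangle=A$ pointwise, so $\alpha$ is an involution. Writing a general element of $A$ as $ci$ with $c\in C$ and $i\in I$, one computes $(ci)^{-1}(ci)^\alpha=i^{-2}$, so $[A,\alpha]=\{i^2\mid i\in I\}$ has order $|I|/|D|=2$; applying the same to the involution $\alpha\iota$, whose fixed subgroup is $I$ and which inverts $C$, gives $|\{c^2\mid c\in C\}|=2$ as well. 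Hence $C$ and $I$ have exponent dividing $4$, so $A=CI$ has exponent $4$ (it is greater than $2$ by hypothesis), that is, $A\cong C_4^{\,r}\times C_2^{\,s}$ with $r\ge 1$; moreover $\{a^2\mid a\in A\}=\{c^2\mid c\in C\}\{i^2\mid i\in I\}$ has order at most $4$, forcing $r\le 2$. If $r=1$, then $|A_2|=|A|/2$, so $A_2\le B$ gives $B=A_2$ and $(A,B)\cong(C_4\times C_2^{\,s},C_2^{\,s+1})$ with $s\ge 1$; if $r=2$, then every index-$2$ subgroup of $A$ containing $A_2$ is isomorphic to $C_4\times C_2^{\,s+1}$, so $(A,B)\cong(C_4^2\times C_2^{\,s},C_4\times C_2^{\,s+1})$ with $s\ge 0$. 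In either case $(A,B)$ appears in Theorem~\ref{thrm:aa}, the desired contradiction. The main obstacle is exactly this last structural step: deducing from ``$\alpha$ centralises and inverts index-$2$ subgroups'' that $A$ has exponent $4$ and $C_4$-rank at most $2$, and then identifying the two resulting families with those of Theorem~\ref{thrm:aa}; everything else is orbit counting and elementary coset arithmetic, together with the bookkeeping needed to make the constant $\frac{11}{48}$ come out.
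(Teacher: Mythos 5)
Your proof is correct, and it takes a noticeably different route from the paper's. You run one uniform orbit count for the group $\langle\alpha,\iota\rangle$ on $A\setminus B$ (the count is exactly $2^\ell$ with $\ell$ the number of orbits), reduce the whole lemma to the single inequality $|C\setminus B|+|I\setminus B|\le \tfrac{5|A|}{12}+|A_2\setminus B|$ with $C:=\cent A\alpha$ and $I:=\{a\in A\mid a^\alpha=a^{-1}\}$, and dispose of the only hard configuration ($C$ and $I$ of index $2$, both different from $B$, $A_2\le B$) by a structural argument: the squaring maps on $C$ and on $I$ have image of order $2$, so $A$ has exponent $4$ and $C_4$-rank at most $2$, which lands you exactly in the two families of Theorem~\ref{thrm:aa}. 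The paper instead splits into cases according to whether $A_2\le B$ and how $T_1=C$ and $T_{-1}=I$ sit relative to $B$; in most of its cases it does the same orbit-size bookkeeping you do, but in the subcase where $C\cap A_2\nleq B$ (so $A=B\times\langle a\rangle$ for an involution $a$ fixed by $\alpha$) it does not count orbits on the coset directly: it transfers the problem to $B$ and invokes Lemma~5.5 of \cite{DSV}, and its identification of the exceptional pairs (its Cases 1c and 2b) is close in spirit to yours, arguing via $T_1\cap T_{-1}\le A_2$ and $|A:A_2|\in\{2,4\}$ rather than via squaring maps. Your route buys self-containedness (no appeal to the external DSV lemma) and a cleaner, single numerical reduction, at the cost of the somewhat longer structural analysis at the end; the paper's route buys reuse of an existing lemma and a case division that mirrors the rest of Section~\ref{sec: 3}. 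Two small expository points you should add: the verification that $f=\tfrac12\bigl(|C\setminus B|+|I\setminus B|\bigr)$ uses $C\cap A_2=I\cap A_2=C\cap I$ and the disjointness of $(C\cap I)\setminus B$ from the order-$4$ part, which deserves a displayed computation; and in your $r=1$ case the claim $s\ge 1$ should be justified by noting that $C\ne I$ are two distinct index-$2$ subgroups, so $A\not\cong C_4$ (the paper makes the analogous remark explicitly).
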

\begin{proof}
Set
\begin{align*}
T_1&:=\{a\in A\mid a^\alpha=a\},\\
T_{-1}&:=\{a\in A\mid a^\alpha=a^{-1}\},\\
\mathcal{S}&:=\{S\subseteq A\setminus  B\mid S=S^{-1}, S^\alpha=S\},\\
s&:=|\mathcal{S}|.
\end{align*}
Observe that $T_1\cap T_{-1}\le A_2$.

We start with a preliminary remark:  given a subset $S\subseteq A\setminus B$, $S$ satisfies $S=S^{-1}$ and $S^\alpha=S$ if and only if $S$ is invariant by the subgroup $\langle \iota,\alpha\rangle$ of $\Aut(A)$.
Now, we divide the proof in two  cases, and each case in various subcases.

\smallskip

\noindent \textsc{\textbf{Case 1:} } $A_2\le B$.

\smallskip

\noindent\textsc{Case 1a: }$T_1\cup T_{-1}\subseteq B$.

\noindent The conditions in Case 1a guarantee that each orbit of $\langle\iota,\alpha\rangle$ on $A\setminus B$ has cardinality at least $4$. Therefore,
\begin{equation*}
s\le 2^{\frac{|A\setminus B|}{4}}=2^{\frac{|A|}{8}}<2^{\frac{11|A|}{48}+\frac{|A_2\setminus B|}{2}}.
\end{equation*}

\smallskip

\noindent \textsc{Case 1b: }$T_1\le B$ and  $T_{-1}\nleq B$, or $T_1\nleq B$ and $T_{-1}\le B$.

\smallskip

\noindent We only deal with the case $T_1\le  B$, $T_{-1}\nleq B$ and $A_2\le B$, the other case is similar.
As $A_2\le B$, $\langle \alpha,\iota\rangle$ has orbits of size at least $2$ on $(A\setminus B)\cap (T_1\cup T_{-1})=T_{-1}\setminus B$ and of size at least $4$ on $(A\setminus B)\setminus (T_1\cup T_{-1})$. Therefore,
\begin{align*}
s&\leq
2^{\frac{|(A\setminus B)\cap (T_1\cup T_{-1})|}{2}+\frac{|(A\setminus B)\setminus (T_1\cup T_{-1})|}{4}}
=
2^{\frac{|A\setminus B|}{4}+\frac{|(A\setminus B)\cap (T_1\cup T_{-1})|}{4}}\\
&=
2^{\frac{|A|}{8}+\frac{|(A\setminus B)\cap (T_1\cup T_{-1})|}{4}}=2^{\frac{|A|}{8}+\frac{|T_{-1}\setminus B|}{4}}
=
2^{\frac{|A|}{8}+\frac{|T_{-1}|}{8}}
\le 2^{\frac{|A|}{8}+\frac{|A|}{16}}\\
&= 2^{\frac{3|A|}{16}}<2^{\frac{11|A|}{48}+\frac{|A_2\setminus B|}{2}}.
\end{align*}

\smallskip

\noindent \textsc{Case 1c: }$T_1\nleq B$ and  $T_{-1}\nleq  B$.

\smallskip

\noindent Observe that $T_1\cap T_{-1}\le A_2\le B$. In particular, $T_1\ne T_{-1}$. We argue as  in the case above but slightly refining the argument. As $A_2\le B$, $\langle \alpha,\iota\rangle$ has orbits of size at least $2$ on $(A\setminus B)\cap (T_1\cup T_{-1})$ and of size at least $4$ on $(A\setminus B)\setminus (T_1\cup T_{-1})$. Therefore,
\begin{align*}
s&\leq 2^{\frac{|(A\setminus B)\cap (T_1\cup T_{-1})|}{2}+\frac{|(A\setminus B)\setminus (T_1\cup T_{-1})|}{4}}=2^{\frac{|A|}{8}+\frac{|(A\setminus B)\cap (T_1\cup T_{-1})|}{4}}=2^{\frac{|A|}{8}+\frac{|T_1\setminus B|}{4}+\frac{|T_{-1}\setminus B|}{4}}\\
&=2^{\frac{|A|}{8}+\frac{|T_1|}{8}+\frac{|T_{-1}|}{8}}.
\end{align*}

If $|A:T_1|\ge 3$ or $|A:T_{-1}|\ge 3$, then
$$s\le 2^{\frac{|A|}{8}+\frac{|A|}{16}+\frac{|A|}{24}}=2^{\frac{11|A|}{48}}=2^{\frac{11|A|}{48}+\frac{|A_2\setminus B|}{2}}.$$

Finally, suppose that $|A:T_1|=|A:T_{-1}|=2$. In particular, as $T_{1}\cap T_{-1}\le A_2$, we deduce $A_2$ has index either $2$ or $4$ in $A$.

Assume first that $|A:A_2|=4$. Then $A_2=T_1\cap T_{-1}$. Moreover, since $T_1/A_2$ and $T_{-1}/A_2$ are two distinct subgroups of $A/A_2$ of order $2$, we deduce $A/A_2\cong C_2\times C_2$. Then $A\cong C_4\times C_4\times C_2^{\ell}$ for some $\ell\ge 0$.  Since $B$ contains $A_2$, we have $B\cong C_4\times C_2^{\ell+1}$. Therefore $(A,B)$ is one of the pairs in the statement of Theorem~\ref{thrm:aa}, contradicting one of the hypotheses of this lemma.

Assume that $|A:A_2|=2$. Therefore $A\cong C_4\times C_2^\ell$, for some $\ell\ge 1$.  (If $\ell=0$, then $A\cong C_4$. However, $C_4$ has a unique subgroup of index $2$, forcing $T_1=T_{-1}=A_2$.)  As $A_2\le B$ and $|B|=|A_2|$, we must have $B=A_2\cong C_2^{\ell+1}$. Therefore $(A,B)$ is one of the pairs in the statement of Theorem~\ref{thrm:aa}, contradicting one of the hypotheses of this lemma.

\smallskip

This concludes the proof of \textbf{Case 1}.

\smallskip

\noindent \textsc{\textbf{Case 2:} } $A_2\nleq B$.

\smallskip

\noindent \textsc{Case 2a: }$T_1\cap A_2\nleq B$.

\smallskip

\noindent       Since $T_1\cap A_2\nleq B$, we may write $A= B\times \langle a\rangle$, with $a\in T_1\cap A_2$. In particular, $a^\alpha=a=a^{-1}$. Consider $\beta:=\alpha_{|B}$, the restriction of $\alpha$ to $B$. As $\alpha$ is not the identity or $\iota$ but fixes the involution $a$, $\beta$ is neither the identity nor the inverse automorphism of $B$. Now every subset $S\subseteq A\setminus B$ satisfying $S=S^{-1}$ and $S^\alpha=S$ is of the form $S=aT$, where $T$ is a subset of $B$ satisfying $T=T^{-1}$ and $T^\beta=T$. Observe that since $A=B\times \langle a\rangle$ and $A$ has exponent greater than $2$, the group $B$ has exponent greater than $2$. In particular we are in the position to apply Lemma 5.5  in \cite{DSV} to the group $B$. From \cite[Lemma 5.5]{DSV}, the number of choices for $T$ is at most $2^{\frac{11|B|}{24}+\frac{|B_2|}{2}}$. (Strictly speaking, \cite[Lemma 5.5]{DSV} only says that the number of choices for $T$ is at most $2^{\frac{11|B|}{24}+\frac{|B_2|}{2}+(\log|B|)^2}$, in our application here we may delete the extra factor $2^{(\log_2|B|)^2}$ because the automorphism $\beta$ of $B$ has been fixed.) Therefore
$$s\le 2^{\frac{11|B|}{24}+\frac{|B_2|}{2}}=2^{\frac{11|A|}{48}+\frac{|A_2\setminus B|}{2}}.$$

\smallskip

\noindent\textsc{Case 2b: }$T_1\cap A_2\leq B$.

\smallskip

\noindent Since $T_1\cap A_2=T_{-1}\cap A_2$ and $T_1\cap T_{-1}\le A_2$, the sets $T_1\setminus B$, $T_{-1}\setminus B$ and $A_2\setminus B$ are pairwise disjoint. For simplicity, we write $$\bar{T}_1:=T_1\setminus B,\,\,\, \bar{T}_{-1}:=T_{-1}\setminus B\,\,\,\textrm{ and }\,\,\,\bar{A}_2:=A_2\setminus B.$$ Observe that $\langle \alpha,\iota\rangle$ has orbits of size at least $2$ on $\bar{T}_1\cup\bar{T}_{-1}\cup \bar{A}_2$ and of size at least $4$ on $(A\setminus B)\setminus (\bar{T}_1\cup\bar{T}_{-1}\cup \bar{A}_2)$. Therefore,

\begin{align*}
s&\le 2^{\frac{|(A\setminus B)\setminus (\bar{T}_1\cup \bar{T}_{-1}\cup \bar{A}_2)|}{4}+\frac{|\bar{T}_1|}{2}+\frac{|\bar{T}_{-1}|}{2}+\frac{|\bar{A}_2|}{2}}\\
&=2^{\frac{|(A\setminus B)|}{4}+\frac{|\bar{T}_1|}{4}+ \frac{|\bar{T}_{-1}|}{4}+ \frac{|\bar{A}_2|}{4}}\\
&=2^{\frac{|A|}{8}+\frac{|\bar{T}_1|}{4}+ \frac{|\bar{T}_{-1}|}{4}+ \frac{|A_2\setminus B|}{4}}.
\end{align*}
For $i\in\{1,-1\}$, write $a_i:=|A:T_i|$ if $T_{i}\nleq B$ and $a_i:=\infty$ otherwise. Now, a simple computation shows that either
\begin{description}
\item[(a)] $$\left(\frac{1}{8}+\frac{1}{8a_1}+\frac{1}{8a_{-1}}\right)\le\frac{11}{48},\textrm{ or}$$
\item[(b)] $|A:T_1|=|A:T_{-1}|=2$, $T_1\nleq B$ and $T_{-1}\nleq B$.
\end{description}
In {\bf (a)}, we have
$$\frac{|A|}{8}+\frac{|\bar{T}_1|}{4}+\frac{|\bar{T}_{-1}|}{4}=|A|\left(\frac{1}{8}+\frac{1}{8a_1}+\frac{1}{8a_{-1}}\right)\le \frac{11|A|}{48}$$
and the proof of this lemma follows in this case.

Suppose {\bf (b)} holds, that is, $$|A:T_1|=|A:T_{-1}|=2,\, T_1\nleq B\, \textrm{ and }T_{-1}\nleq B.$$ As $T_1\cap A_2\le  B$, $T_1$, $T_{-1}$ and  $B$ are three distinct subgroups of $A$ having index $2$ and containing the index four subgroup $T_1\cap T_{-1}\le A_2$.

Assume $|A:A_2|=4$.  Moreover, since $T_1/A_2$ and $T_{-1}/A_2$ are two distinct subgroups of $A/A_2$ of order $2$, we deduce $A/A_2\cong C_2\times C_2$. Then $A_2=T_1\cap T_{-1}$ and $A\cong C_4\times C_4\times C_2^{\ell}$ for some $\ell\ge 0$.  Since $B$ contains $A_2$, we have $B\cong C_4\times C_2^{\ell+1}$. Therefore $(A,B)$ is one of the pairs in the statement of Theorem~\ref{thrm:aa}, contradicting one of the hypotheses of this lemma.

Assume that $|A:A_2|=2$. Therefore $A\cong C_4\times C_2^\ell$, for some $\ell\ge 1$. (If $\ell=0$, then $A\cong C_4$. However, $C_4$ has a unique subgroup of index $2$, forcing $T_1=T_{-1}=A_2$.) Now, $T_1$, $T_{-1}$ and $A_2$ are three distinct subgroups of $A$ having index $2$ and containing $T_1\cap T_{-1}=T_{1}\cap A_2=T_{-1}\cap A_2$. As $A/(T_1\cap T_{-1})\cong C_2\times C_2$, the subgroup equals one of $T_1$, $T_{-1}$, $A_2$, contradicting one of the previous paragraphs. Therefore, this case does not arise.
\end{proof}

\begin{lemma}\label{l:MYMYMY}Let $A$ be an abelian group and let $ B$ be a subgroup of $A$ having index $2$. The number of triples $(C,Z,S)$ with
\begin{itemize}
\item $A=C\times Z$,
\item $C$ a cyclic subgroup of $A$ of order $t\ge 4$,
\item $Z$ an elementary abelian $2$-subgroup of $A$ and
\item $S\subseteq A\setminus B$ such that $S=S'\times S''$, for some $S'\in \{C,\emptyset,\{1\},C\setminus\{1\}\}$ and for some $S''\subseteq Z$,
\end{itemize} is at most $2^{\frac{|A|}{8}+2\log_2|A|-1}$.
\end{lemma}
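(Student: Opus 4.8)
The plan is to count these triples by grouping them according to the cyclic factor $C$: for each admissible pair $(C,Z)$ one first bounds the number of corresponding sets $S$, and then one bounds how many pairs $(C,Z)$ can occur.

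First I would fix a pair $(C,Z)$ and bound the number of corresponding $S$. Writing $t=|C|\ge 4$, we have $|Z|=|A|/t\le|A|/4$, and since $|A:B|=2$ the subgroup $Z\cap B$ has index at most $2$ in $Z$, so $|Z\setminus B|\le|Z|/2\le|A|/8$. For $S=S'\times S''$ with $S'\in\{C,\emptyset,\{1\},C\setminus\{1\}\}$ and $S''\subseteq Z$, the condition $S\subseteq A\setminus B$ can be analysed coset by coset: if $C\le B$ then it is equivalent to $S''\subseteq Z\setminus B$ (and all four choices of $S'$ remain possible), while if $C\nleq B$ then it forces $S''=\emptyset$ unless $S'\in\{\emptyset,\{1\}\}$. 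In every case the number of admissible $S$ coming from $(C,Z)$ is at most $4\cdot 2^{|Z\setminus B|}\le 4\cdot 2^{|A|/8}$.

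Next I would drastically restrict which $t$ can occur. If any triple exists then $A=C\times Z\cong C_t\times C_2^{r}$ with $2^{r}=|Z|$; inspecting the primary decomposition shows that the odd part of $A$ must be cyclic and must equal the odd part of $C$, while the $2$-part of $C$ must be a direct factor of the $2$-part of $A$ with an elementary abelian complement. This pins down $|C|$: when $t$ is even $t=\exp(A)$, and when $t$ is odd $C$ equals the (cyclic) odd part of $A$ and $Z$ equals the Sylow $2$-subgroup of $A$, so $(C,Z)$ is unique. Thus at most one even value $t_e$ and at most one odd value $t_o\ge5$ of $t$ occur. For $t_o$ there is a single pair $(C,Z)$, contributing at most $4\cdot 2^{|A|/(2t_o)}\le 4\cdot 2^{|A|/10}$ triples. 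For $t_e$ a short generator count shows that $A$ has exactly $|A|/t_e$ cyclic subgroups of order $t_e$ when $4\mid t_e$, and fewer than $2|A|/t_e$ when $t_e\equiv2\pmod 4$; moreover a fixed such $C$ has exactly $|\mathrm{Hom}(A/C,C)|=|A|/t_e$ elementary abelian $2$-complements (parametrize complements by the homomorphisms $A/C\to C$ read off from the projection along $C$, and use that $C$ has a unique subgroup of order $2$ while $A/C$ is elementary abelian of order $|A|/t_e$). Hence there are at most $2|A|^2/t_e^{2}\le|A|^2/16$ pairs $(C,Z)$ with $|C|=t_e$, contributing at most $(|A|^2/16)\cdot 4\cdot 2^{|A|/8}=2^{|A|/8+2\log_2|A|-2}$ triples.

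Adding the two contributions yields at most $2^{|A|/8+2\log_2|A|-2}+4\cdot 2^{|A|/10}$, which is at most $2^{|A|/8+2\log_2|A|-1}$ as soon as $A$ contains an element of order at least $4$ (otherwise there are no triples, so the bound is trivial). The step I expect to be the main obstacle is making the constants work out: the estimate is essentially sharp for $A\cong C_4\times C_2^{\ell}$, so for $t_e=4$ one cannot afford the crude bound ``$A$ has at most $|A|$ cyclic subgroups'' and must use the exact count $|A|/4$, and one must check that the odd-order term $4\cdot 2^{|A|/10}$ never ruins the final inequality; squeezing out the constant $-1$ (rather than $-2$) in the exponent is exactly what makes the bookkeeping in the last two steps delicate.
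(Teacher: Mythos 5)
Your proposal is correct and takes essentially the same approach as the paper: both arguments multiply a bound of roughly $|A|^2/16$ (up to a factor $2$) on the number of admissible pairs $(C,Z)$ by the per-pair bound of at most $4\cdot 2^{|Z\setminus B|}\le 4\cdot 2^{|A|/8}$ choices of $S$, the paper parametrizing the cyclic factors as $\langle\lambda k\rangle$ with $k\in Z'$ and the complements as index-$2$ subgroups of the $2$-torsion subgroup, while you count generators of cyclic subgroups and complements via $\mathrm{Hom}(A/C,C)$ and handle a possible odd-order $C$ separately. One cosmetic slip: the displayed chain $2|A|^2/t_e^2\le |A|^2/16$ is false as written when $t_e=4$, but your own case split (exactly $|A|^2/t_e^2$ pairs when $4\mid t_e$, and fewer than $2|A|^2/t_e^2\le |A|^2/18$ pairs when $t_e\equiv 2\pmod 4$) gives the intended bound $\le |A|^2/16$, so the argument stands.
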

\begin{proof}
Clearly, we may assume that $A =\langle\lambda\rangle\times Z'$, for some elementary abelian
$2$-subgroup $Z'$ and some cyclic subgroup $\langle\lambda\rangle$ of order $t\ge 4$, otherwise we have no triple. If $t$ is odd, then this decomposition is
unique. If $t$ is even, then the number of choices for $C$ is $|Z'|$ because the subgroup $C$ equals $\langle\lambda k\rangle$, for some
$k\in Z'$; while the number of choices for $Z$ is at most the number of subgroups of
index $2$ in $\langle\lambda^{|\lambda|/2}\rangle\times Z'$, which is at most $2|Z'|$. Assume now that $C$ and $Z$ are fixed.

We have $4$ choices for $S'$. Moreover, if $S'=\emptyset$, then $S=S'\times S''=\emptyset$, for every subset $S''$ of $Z$. Therefore, when $S'=\emptyset$, we have only one choice for $S$. Similarly, when $S''=\emptyset$, we have only one choice for $S$. Therefore, let $S'\in\{C,\{1\},C\setminus\{1\}\}$ and let $S''\subseteq Z$ with $S''\ne \emptyset$ such that $S:=S'\times S''\subseteq A\setminus B$. As $S''\ne\emptyset$, $Z$ is not contained in $B$. Then $B\cap Z$ has index $2$ in $Z$ and hence we have $2^{|Z:Z\cap B|}=2^{\frac{|Z|}{2}}$ choices for $S''$. Since $S'$ equals $C$, $\{1\}$ or $C\setminus\{1\}$, we have $S'\cap B\ne\emptyset$ and hence there exists $s_1\in S'\cap B$. Since $s_1S''\subseteq A\setminus B$ and $s_1\in B$, we deduce $S''\subseteq Z\setminus B$.  Therefore, when $S'$ and $S''$ are both non-empty, we have at most $3\cdot 2^{\frac{|Z'|}{2}}$ choices for $S$.

It follows that there are most
$$|Z'|\cdot 2|Z'|\cdot (1+3\cdot 2^{\frac{|A|}{8}})\le 2^{2\log_2|Z'|+1+\frac{|A|}{8}+2}\le
2^{2\log_2(|A|/4)+1+\frac{|A|}{8}+2}\le2^{\frac{|A|}{8}+2\log_2|A|-1}$$
 triples.
\end{proof}

\begin{proof}[Proof of Theorem $\ref{thrm:11}$]
The group $A$ contains $2^{\frac{|A|}{4}+\frac{|A_2\setminus B|}{2}}$ inverse-closed subsets $S$ with $S\subseteq A\setminus B$ by Lemma~\ref{l:4}.

Now, we assume that Part~(2) does not hold, that is, $(A, B)$ is not one of the pairs in Theorem~\ref{thrm:aa}; we show that Part~(1) holds.
 If $A$ has exponent $2$, then $A_2=A$ and $|A_2\setminus B|=|A|/2$. Thus the result follows from Theorem \ref{thrm:1} because every Cayley digraph over an elementary abelian $2$-group is undirected. For the rest of the proof we assume that $A$ has exponent at least $3$, and hence the mapping $\iota:A\to A$ defined by $a^\iota=a^{-1}$, for every $a\in A$, is a non-identity automorphism of $A$.

We partition the set $\mathcal{S}:=\{S\mid S\subseteq A\setminus B,S=S^{-1}\}$ in five (not necessarily disjoint)  subsets:
\begin{align*}
\mathcal{A}_1:=&\{S\in \mathcal{S}\mid \langle S\rangle <A\},\\
\mathcal{A}_2:=&\{S\in \mathcal{S}\mid \textrm{there exists }\alpha\in \Aut(A) \textrm{ with }\alpha\ne 1, \alpha\ne\iota, S^\alpha=S\textrm{ and } B^\alpha= B\},\\
\mathcal{A}_3:=&\{S\in \mathcal{S}\mid \textrm{there exist two subgroups }H \textrm{ and }K \textrm{ with }1<H\le K<A, H\le B,\\
&\quad |H| \textrm{ and }|A:K| \textrm{ both prime numbers,} \textrm{ and }S\setminus K \textrm{ is a union of }H\textrm{-cosets}\}\\
&\textrm{ when }|A| \textrm{ is not a }2\textrm{-group, and }\\
\mathcal{A}_3:=&\emptyset \textrm { when }A \textrm{ is a }2\textrm{-group,}\\
\mathcal{A}_4:=&
\{
S\in\mathcal{S}
\mid
\textrm{there exist a cyclic subgroup }
C
 \textrm{ of order at least } 4 \textrm{ and an elementary }\\
&
  \textrm{abelian }2\textrm{-subgroup }Z\textrm{ with } A=C\times Z, \textrm{ there exist }\\
&
  S'\in \{
  \emptyset,
  \{1\}
  ,C,
  C\setminus
  \{1\}
  \} \textrm{ and }S''\subseteq Z \textrm{ with }S=S'\times S''
\},\\
\mathcal{A}_5:=&\mathcal{S}\setminus (\mathcal{A}_1\cup\mathcal{A}_2\cup \mathcal{A}_3\cup\mathcal{A}_4).
\end{align*}

From Lemma~\ref{l:3new},
\begin{equation}\label{eq:4new}|\mathcal{A}_1|\le 2^{\frac{|A|}{8}+\frac{|A_2\setminus B|}{2}+\log_2|A|}.
\end{equation} Observe that, if $S\in \mathcal{S}\setminus \mathcal{A}_1$, then $\Cay(A,S)$ is connected and hence $\{ B,A\setminus  B\}$ is the only bipartition of $\Cay(A,S)$. In particular, every automorphism of $\Cay(A,S)$ must preserve the bipartition  $\{ B, A\setminus B\}$.

From Lemma~\ref{l:22},
\begin{equation}\label{eq:5new}
|\mathcal{A}_2|\le 2^{\frac{11|A|}{48}+\frac{|A_2\setminus B|}{2}}(|\Aut(A)|-1)\le 2^{\frac{11|A|}{48}+\frac{|A_2\setminus B|}{2}+(\log_2|A|)^2}.
\end{equation}

Since $A$ contains at most $|A|^2$ subgroups $H$ and $K$ with $|H|$ and $|A:K|$ both prime numbers, Lemma~\ref{l:21} yields
\begin{equation}\label{eq:6new}
|\mathcal{A}_3|\le 2^{\frac{11|A|}{48}+\frac{|A_2\setminus B|}{2}+2\log_2|A|}.
\end{equation}

From Lemma~\ref{l:MYMYMY}, we have

\begin{equation}\label{eq:MYMYMY}
|\mathcal{A}_4|\le 2^{\frac{|A|}{8}+2\log_2|A|-1}.
\end{equation}

\smallskip

\noindent\textsc{Claim: } For every $S\in \mathcal{A}_5$, $\Cay(A,S)$ is a  bipartite Cayley graph on $A$ with $c(A,B)=2$.

\smallskip

\noindent Let $S\in \mathcal{A}_5$, let $\Gamma:=\Cay(A,S)$ and let $G:=\Aut(\Gamma)$. As $S\notin \mathcal{A}_1$, $\Gamma$ is connected, bipartite and $\{B,A\setminus B\}$ is the only bipartition of $\Gamma$.

Since $\Gamma$ is a Cayley graph over $A$, the group $A$ is embedded in $G$ via its right regular representation. Thus we may identify $A$ as a subgroup of $G$, and we do so. Let $G_1$ be the stabilizer of the vertex $1$ of $\Gamma$. Since  $1\in B$, the group $G_1$ fixes the two parts $B$ and $A\setminus B$ of the bipartition of $\Gamma$, that is, $B^\alpha=B$ for each $\alpha\in G_1$.

Let  $N:=\nor{G_1}A $. Clearly, $N=\{\alpha\in \Aut(A)\mid S^\alpha=S\}=\langle\iota\rangle$, because $S\notin\mathcal{A}_2$. Therefore, $\nor G A $ is a generalized dihedral group over the abelian group  $A$, that is, $\langle A,\iota\rangle=\nor G A$.  Therefore,  we are in the position to apply~\cite[Theorem~$4.3$]{DSV} to the group $G$, see also~\cite[Definition $4.1$]{DSV}.

Part (1) of Theorem $4.3$ in~\cite{DSV} does not hold for $G$ because $S\notin\mathcal{A}_3$ (observe here that arguing as in the proof of Theorem~\ref{thrm:1} we may assume that $|H|$ and $|A:K|$ are both prime numbers in the statement of~\cite[Theorem~$4.3$]{DSV}). Similarly, part (2) of Theorem $4.3$ in~\cite{DSV} also does not hold because $S\notin\mathcal{A}_4$. Therefore, from  Theorem~$4.3$ in~\cite{DSV}, we deduce $G=\nor G A=\langle A,\iota\rangle$. Thus $|\Aut(\Cay(A,S)):A|=2$ and so $c(A,B)=2$.~$_\blacksquare$

\smallskip

Now, the proof  follows immediately from the previous claim, together with  a computation using~\eqref{eq:4new},~\eqref{eq:5new},~\eqref{eq:6new} and~\eqref{eq:MYMYMY}.
\end{proof}

\begin{proof}[Proof of Corollary $\ref{cor:22}$]
Let $A$ be an abelian group and let $B$ be a subgroup of $A$ having index $2$. If $A$ has exponent $2$, then the proof follows from Corollary~\ref{cor:2}. Suppose that $A$ has exponent at least $3$.  If $|A|\ge 8\, 214$, then a computation shows that $|A|/4>11|A|/48+(\log_2|A|)^2+2$ and hence, by Theorem \ref{thrm:11}, there exists an inverse-closed subset $S\subseteq A\setminus B$ with $\Cay(A,S) $ having Cayley index $2$, that is, $c(A,B)=2$.

Suppose first $|A|\le 4\,096$. In this case the proof follows with the invaluable help of the computer algebra system \texttt{magma} \cite{magma}. All the computations are straightforward and use the same method explained in the proof of Corollary~\ref{cor:2}. Except for the pairs listed in Table \ref{table:2}, we generate at random $10\,000$ inverse-closed subsets $S$ of $A\setminus B$  and we check whether $\Cay(A,S)$ has Cayley index $2$: in all cases, we have shown that $c(A,B)=2$. When $(A,B)$ is one of the pairs in Table~\ref{table:2}, we construct all inverse-closed subsets $S$ of $A\setminus B$ and we compute $c(A,B)$.

Suppose then $|A|> 4\,096$ and $|A|<8\,214$.
Following the argument in the proof of Theorem~\ref{thrm:11}, Eqs.~\eqref{eq:4new},~\eqref{eq:5new},~\eqref{eq:6new} and~\eqref{eq:MYMYMY}, we see that there exists a subset $S\subseteq A\setminus B$ with $S=S^{-1}$ and with $\Cay(A,S) $ having Cayley index $2$ as long as
$$2^{\frac{|A|}{4}+\frac{|A_2\setminus B|}{2}}>
2^{\frac{|A|}{8}+\frac{|A_2\setminus B|}{2}+\log_2|A|}+
2^{\frac{11|A|}{48}+\frac{|A_2\setminus B|}{2}}|\Aut(A)|+
2^{\frac{11|A|}{48}+\frac{|A_2\setminus B|}{2}+2\log_2|A|}+
2^{\frac{|A|}{8}+2\log_2|A|-1}.$$
With \texttt{magma}, we have checked this inequality computing explicitly $|\Aut(A)|$; every  abelian group $A$  with $4\,096<|A|<8\,214$ satisfies this inequality.
\end{proof}

\begin{proof}[Proof of Theorem~$\ref{thrm:unlabelled2}$]
Let $c:=1$ when $A$ has exponent $2$ and let $c:=2$ when $A$ has exponent greater than $2$. Let $\iota:A\to A$ be the automorphism of $A$ with $a^\iota=a^{-1}$, for every $a\in A$.
For the proof, we let $\mathrm{GRR}(A,B)$ denote the set of unlabelled bipartite Cayley graphs over $A$ with bipartition $\{B,A\setminus B\}$ and having Cayley index $c$. Also, we let  $2^{A\setminus B}_{\mathrm{GRR}}$ be the collection of the subsets $S$ of $A\setminus B$ with $\Cay(A,S)$ having Cayley index $2$.

Let $S_1$ and $S_2$ be in $2^{A\setminus B}_{\mathrm{GRR}}$ and let $\Gamma_1 := \Cay(A, S_1)$ and $\Gamma_2 := \Cay(A, S_2 )$. Suppose that $\Gamma_1\cong\Gamma_2$ and let $\varphi$ be a graph isomorphism
from $\Gamma_1$ to $\Gamma_2$. Without loss of generality, we may assume that $1^\varphi=1$. Note that $\varphi$ induces a group automorphism from $\Aut(\Gamma_1)=\langle A,\iota\rangle$ to $\Aut(\Gamma_2)=\langle A,\iota\rangle$. Using the fact that the pair $(A,B)$ is not one of the exceptional pairs described in Theorem \ref{thrm:aa}, we deduce $A^\varphi=A$.  In particular, $\varphi\in \Aut(A)$ and $S_1$ and $S_2$ are conjugate via an element of $\Aut(A)$. This shows that $$|\mathrm{GRR}(A,B)|\geq \frac{|2^{A\setminus B}_{\mathrm{GRR}}|}{|\Aut(A)|}.$$ By Theorem~\ref{thrm:11}, we have $$|2^{A\setminus B}_{\mathrm{GRR}}|\geq 2^{\frac{|A|}{4}+\frac{|A_2\setminus B|}{2}}-2^{\frac{11|A|}{48}+\frac{|A_2\setminus B|}{2}+(\log_2|A|)^2+2}.$$ Since $|\Aut(A)|\leq  2^{(\log_2|A|)^2}$, it follows that
$$|\mathrm{GRR}(A,B)|\geq  2^{\frac{|A|}{4}+\frac{|A_2\setminus B|}{2}-(\log_2|A|)^2}-2^{\frac{11|A|}{48}+\frac{|A_2\setminus B|}{2}+2}
\geq  2^{\frac{|A|}{4}+\frac{|A_2\setminus B|}{2}}(2^{-(\log_2|A|)^2}-2^{-\frac{|A|}{48}+2}).$$

From Lemma \ref{l:4}, the number of inverse-closed subsets of $A$ contained in $A\setminus B$ is $2^{\frac{|A|}{4}+\frac{|A_2\setminus B|}{2}}$. Therefore
$$2^{\frac{|A|}{4}+\frac{|A_2\setminus B|}{2}}\ge |\mathrm{GRR}(A,B)|.$$
 This completes the proof.
\end{proof}

\thebibliography{10}
\bibitem{babai1}L.~Babai, Finite digraphs with given regular automorphism groups, \textit{Periodica Mathematica Hungarica} \textbf{11} (1980), 257--270.

\bibitem{babai2}L.~Babai, W.~Imrich, Tournaments with given regular group, \textit{Aequationes Mathematicae} \textbf{19} (1979), 232--244.

\bibitem{magma}W.~Bosma, J.~Cannon, C.~Playoust, The Magma algebra system. I. The user language, \textit{J.
Symbolic Comput.} \textbf{24} (1997), 235--265.

\bibitem{DWT}J.~K.~Doyle, T.~W.~Tucker, M.~E.~Watkins, Graphical Frobenius representations, \textit{J. Algebr. Comb.} (2018),  https://doi.org/10.1007/s10801-018-0814-6.

\bibitem{WT}M.~Conder, T.~Tucker, and M.~Watkins, Graphical Frobenius Representations with even
complements, \href{https://www.math.auckland.ac.nz/~dleemans/SCDO2016/SCDO-TALKS/Monday/Tucker.pdf}{https://www.math.auckland.ac.nz/~dleemans/SCDO2016/SCDO-TALKS/Monday/Tucker.pdf}

\bibitem{DSV}E.~Dobson, P.~Spiga, G.~Verret, Cayley graphs on abelian groups, \textit{Combinatorica} \textbf{36} (2016), 371--393.

\bibitem{Fitz}P.~Fitzpatrick, Groups in which an automorphism inverts precisely half of the elements, \textit{Proc.
Roy. Irish. Acad. Sect. A} \textbf{86} (1986), 81--89.

\bibitem{Godsil}C.~D.~Godsil, GRRs for nonsolvable groups, \textit{Algebraic Methods in Graph Theory,}  (Szeged, 1978), 221--239, \textit{Colloq. Math. Soc. J\'{a}nos Bolyai} \textbf{25}, North-Holland, Amsterdam-New York, 1981.

\bibitem{HeMa}P.~Hegarty, D.~MacHale, Two-groups in which an automorphism inverts precisely half of the elements, \textit{Bull. London Math. Soc. }\textbf{30} (1998), 129--135.

\bibitem{Hetzel}D.~Hetzel, \"{U}ber regul\"{a}re graphische Darstellung von aufl\"{o}sbaren Gruppen, Technische Universit\"{a}t, Berlin, 1976.

\bibitem{Imrich2}W.~Imrich, Graphs with transitive abelian automorphism group, \textit{Combinat. Theory (Proc. Colloq., Balatonf{\H{u}}red, 1969)}, Budapest, 1970, 651--656.

\bibitem{IW}W. Imrich, M. E. Watkins, On automorphism groups of Cayley graphs, \textit{Period. Math. Hungar.} \textbf{7} (1976), 243--258.

\bibitem{LMacH}H.~Liebeck, D.~MacHale, Groups with Automorphisms Inverting most Elemtents, \textit{Math. Z.} \textbf{124} (1972), 51--63.

\bibitem{MP}B.~McKay, C.~E.~Praeger, Vertex-transitive graphs which are not Cayley graphs, I, \textit{J. Aust. Math. Soc. A} \textbf{56} (1994), 53--63.

\bibitem{MSV}J.~Morris, P.~Spiga, G.~Verret, Automorphisms of Cayley graphs on generalised dicyclic groups, \textit{European Journal of Combinatorics} \textbf{43} (2015), 68--81.

\bibitem{morrisspiga}J.~Morris, P.~Spiga, Every Finite Non-Solvable Group admits an Oriented Regular Representation, \textit{Journal of Combinatorial Theory Series B} \textbf{126} (2017), 198--234.

\bibitem{morrisspiga1}J.~Morris, P.~Spiga, Classification of finite groups that admit an oriented regular representation, \textit{Bull. London Math. Soc.} \textbf{00} (2018), 1--21.

\bibitem{MS}J.~Morris, P.~Spiga, Asymptotic enumeration of Cayley digraphs, \textit{in preparation}.

\bibitem{MT}J.~Morris, J.~Tymburski, Most rigid representations and Cayley index, \textit{The Art of Discrete and Applied Mathematics} \textbf{1} (2018), \#P05.

\bibitem{NW}L.~A.~Nowitz, M.~E.~Watkins, Graphical regular representations of non-abelian groups. I. \textit{Canad. J. Math. }\textbf{24} (1972), 993--1008.

\bibitem{NW1} L. A. Nowitz, M. E. Watkins, Graphical regular representations of non-abelian groups, II, \textit{Canad. J. Math.} \textbf{24} (1972), 1009--1018.

\bibitem{PSV}P.~Poto\v{c}nik, P.~Spiga, G.~Verret, Asymptotic enumeration of vertex-transitive graphs of fixed valency, \textit{J. Combin. Theory Ser. B} \textbf{122} (2017), 221--240.

\bibitem{spiga}P. Spiga, Finite groups admitting an oriented regular representation, \textit{J. Comb. Theory Series A} \textbf{153} (2018), 76--97.

\bibitem{Spiga}P.~Spiga,  Cubic graphical regular representations of finite non-abelian simple groups, \textit{Comm. Algebra} \textbf{46} (2018), 2440--2450.

\bibitem{spiga0}P.~Spiga, On the existence of Frobenius digraphical representations, \textit{Electron. J. Combin. }\textbf{25} (2018), no. 2, Paper~2.6, 19~pp.

\bibitem{spiga1}P.~Spiga, On the existence of graphical Frobenius representations and their asymptotic enumeration: an answer to the GFR conjecture, \textit{submitted}.

\bibitem{xf}B.~Xia, T.~Fang, Cubic graphical regular representations of $\mathrm{PSL}_2(q)$, \textit{Discrete Math. }\textbf{339} (2016), 2051--2055.

\bibitem{XF}S.~J.~Xu, X.~G.~Fang, J.~Wang, M.~Xu, On cubic $s$-arc transitive Cayley graphs of finite
simple groups, \textit{European J. Combin.} \textbf{26} (2005), 133--143.

\bibitem{W1} M. E. Watkins, On the action of non-Abelian groups on graphs, \textit{J. Comb. Theory Ser. B} \textbf{11} (1971), 95--104.

\bibitem{W2}M. E. Watkins, On graphical regular representations of $C_n \times Q$, in: Y. Alavi, D. R. Lick and A. T. White (eds.), Graph Theory and Applications, Springer, Berlin, volume 303 of Lecture Notes in Mathematics, pp.305--311, 1972, proceedings of the Conference at Western Michigan University, Kalamazoo, Michigan, May 10 ¨C 13, 1972 (dedicated to the memory of J. W. T. Youngs).

\bibitem{W3} M. E. Watkins, Graphical regular representations of alternating, symmetric, and miscellaneous small groups, \textit{Aequationes Math.} \textbf{11} (1974), 40--50.
\end{document}